\def\v        {{\boldsymbol v}}
\def\z        {{\boldsymbol z}}
\def\0        {{\boldsymbol 0}}
\def\n        {{\boldsymbol n}}
\def\u        {{\boldsymbol u}}
\def\w        {{\boldsymbol w}}
\def\e        {{\boldsymbol e}}
\def\f        {{\boldsymbol f}}
\def\H        {{\boldsymbol H}}
\def\V        {{\boldsymbol V}}
\def\L        {{\boldsymbol L}}
\def\R {{\mathds R}}
\newtheorem{theorem}{Theorem}
\newtheorem{lemma}[theorem]{Lemma}
\newtheorem{corollary}[theorem]{Corollary}
\newtheorem{definition}[theorem]{Definition}
\newtheorem{remark}[theorem]{Remark}
\begin{document}

\title[From the Navier-Stokes-$\alpha$ equations to the Navier-Stokes equations]{On the approximation of turbulent fluid flows by the Navier-Stokes-$\alpha$ equations on bounded domains}
\author{J. V. Guti\'errez Santacreu$^\dag$} 
\author{ M. A. Rojas-Medar$^\ddag$}
\thanks{$\dag$ Dpto. de Matemática Aplicada I, E. T. S. I. Informática, Universidad de Sevilla. Avda. Reina Mercedes, s/n. E-41012 Sevilla, Spain. Tel.: + 34 95 455 27 95, Fax: + 34 95 455 78 78,  E-mail: {\tt juanvi@us.es}. JVGS was partially supported by the José Castillejo Spanish grant No. JC2011-0418 from Ministerio de Educación y Ciencia}

\thanks{$\ddag$ Grupo de Matem\'aticas Aplicadas, Dpto. de
Ciencias B\'asicas, Facultad de Ciencias, Universidad del
B\'{\i}o-B\'{\i}o, Campus Fernando May, Casilla 447, Chill\'an,
Chile. E-mail: {\tt marko@ueubiobio.cl}. MARM was partially supported by the Fondecyt-Chile grant No. 1120260 and 121909 GI/C-UBB, Chile}

\date{\today}

\begin{abstract} 
The Navier-Stokes-$\alpha$ equations belong to the family of LES (Large Eddy Simulation) models whose fundamental idea is to capture the influence of the small scales on the large ones without computing all the whole range present in the flow. The constant $\alpha$ is a regime flow parameter that has the dimension of the smallest scale being resolvable by the model. Hence, when $\alpha=0$, one recovers the classical Navier-Stokes equations for a flow of viscous, incompressible, Newtonian fluids. Furthermore, the Navier-Stokes-$\alpha$ equations can also be interpreted as a regularization of the Navier-Stokes equations, where $\alpha$ stands for the regularization parameter.

In this paper we first present the Navier-Stokes-$\alpha$ equations on bounded domains with no-slip boundary conditions by means of the Leray regularization using the Helmholtz operator. Then we study the problem of relating  the behavior of the Galerkin approximations for the Navier-Stokes-$\alpha$ equations to that of the solutions of the Navier-Stokes equations on bounded domains with no-slip boundary conditions. The Galerkin method is undertaken by using the eigenfunctions associated with the Stokes operator. We will derive local- and global-in-time error estimates measured in terms of the regime parameter $\alpha$ and the eigenvalues. In particular, in order to obtain  global-in-time error estimates, we will work with the concept of stability for solutions of the Navier-Stokes equations in terms of the $L^2$ norm. 

\end{abstract}
\maketitle

{\bf 2010 Mathematics Subject Classification.}  35Q35; 65M12; 65M15; 76D05.

{\bf Keyword.} Error estimates; Galerkin approximation; Navier-Stokes-$\alpha$ equations; Navier-Stokes equations.

\section{Introduction}
LES models have rapidly emerged as successful turbulent models for simulating dynamics of fluid flows at high Reynolds numbers ($Re$). These are widely used to solve intensive problems in a great variety of application areas in natural and technical sciences. The starting point is the physical fact that the larger scales of turbulent flows contain most of the kinetic energy of the system, which is transferred to smaller scales via the nonlinear term by an inertial and essentially inviscid mechanism. This process continues creating smaller and smaller scales until forming eddies in which the viscous dissipation of energy finally takes place. Therefore, the small-scale dynamics can sometimes have an influence on large-scale structures and hence affect the overall behavior of a fluid flow in many physical phenomena. But computing all of the degrees of freedom required to describe a flow in its entirety at a high Reynolds number  turns out to be impossible to achieve due to considerable limitations in computing power.  It is conjectured by  Kolmogorov's scaling theory that the number of degrees of freedom required by a direct numerical simulation of the Navier-Stokes equations is of the order of $Re^{\frac{9}{4}}$. This theory assumes that the turbulent fluid flow is universal, isotropic and statistically homogeneous for the small-scale structures at high Reynolds numbers. LES approaches avoid such a  situation  by computing large-scale turbulent structures in the fluid flow while the effect of the small-scale ones are modeled. In the literature there exist several ways of separating large scales from small ones. Some examples are regularization techniques such as the Navier-Stokes-$\alpha$ equations and closely related models \cite{Chen-et-al_I, Chen-et-al_II, Chen-et-al_III, Cheskidov-Holm-Olson-Titi, Ilyin-Lunasin-Titi}, nonlinear viscosity methods such as the Smagorinsky model \cite{Smagorinsky}, spectral eddy-viscosity methods such as the Kraichnan model \cite{Kraichnan}, and sub-grid methods such as variational multi-scale models \cite{Hughes-Mazzei-Oberai-Wray, Hughes-Oberai-Mazzei, Hughes-Mazzei-Jensen}.

The emphasis of this work is focused on the Navier-Stokes-$\alpha$ equations. They can be derived in three different ways. 
\begin{enumerate} [(i)]
\item 
Firstly, these equations appeared as a generalization of the Euler-$\alpha$ equations by adding an {\it ad hoc} viscous term \cite{Chen-et-al_I, Chen-et-al_II, Chen-et-al_III} whose explicit form was motivated by physical arguments in absence of boundaries. The Euler-$\alpha$ equations were derived from Lagrangian averaging and asymptotic expansions in Hamilton's principle to the turbulence in the flow being statistically homogeneous and isotropic \cite{Holm-Marsden-Ratiu-1998-I, Holm-Marsden-Ratiu-1998-II}. The viscous term can be also derived from a stochastic interpretation of the Lagrangian flow maps for domains with boundary \cite{Marsden-Shkoller, Shkoller}.  
\item
Secondly, the Navier-Stokes-$\alpha$ equations can be seen as a Leray regularization of the Navier-Stokes equations by using the Helmholtz operator \cite{Guermond-Oden-Prudhomme}. In order to get the resulting system of PDEs to be Galilean invariant, the convective term must be written in its rotational form. On the other hand, the property of being Galilean invariant does not hold for other $\alpha$-models such as the Leray-$\alpha$ equations \cite{Guermond-Oden-Prudhomme}. 

In general, the Leray regularization approach supplies systems of PDEs which are well-posed, as occurs with the Navier-Stokes-$\alpha$ equations. That is, the fundamental mathematical questions of existence, uniqueness and stability for the Navier-Stokes-$\alpha$ equations are known; in particular uniqueness is even proved for three-dimensional domains. Unfortunately, the uniqueness question of global-in-time solutions of the three-dimensional Navier-Stokes equations has not been  solved yet. This issue is intimately related to the one of whether or not the Navier-Stokes equations are a suitable model for turbulent fluids.     

\item 
Finally, the Rivlin-Ericksen continuum theory of differential type gives similar models to the Navier-Stokes-$\alpha$ equations for describing dynamics of a number of non-Newtonian fluids (such as water solution of polymers). These fluids are characterized because its stress-deformation response does not depend only on the constitutively indeterminate pressure and the stretching tensor but also certain other kinematic tensors called the Rivlin-Ericksen stress tensors. Among fluids of different type, one finds the grade-$n$ fluids  whose stress tensor is a polynomial of degree $n$ in the first $n$ Rivlin-Ericksen stress tensor. We refer to \cite{Rivlin-Ericksen, Truesdell-Noll, Dunn-Fosdick, Dunn-Rajagopal} and the references therein for the derivation of the grade-$n$ fluid equations and further physical background on the continuum theory of differential type. Surprisingly, the grade-two fluid equations resembles the Navier-Stokes-$\alpha$ equations except for the viscous dissipation being weaker in the former. It seems to be that the grade-two fluid equations does not in fact provides the correct dissipation for approximating turbulent phenomena near the wall but instead they present the same hyperstress as in the Navier-Stokes-$  \alpha$ equations \cite{Marsden-Shkoller, Shkoller}. That is, the inviscid case of the grade-two fluid equations coincides with the Euler-$\alpha$ equations.
\end{enumerate}

A key property determining the long time  behavior of many evolutionary partial differential equations is the dissipation of energy. In particular, dissipativity is central to the existence of a global attractor. The concept of the global attractor is closely related to that of turbulence. In a nutshell, the global attractor is a compact set in the phase space that absorbs all the trajectories starting from any bounded set after a certain time. Therefore, the global attractor retains the long-time behavior of the whole dynamics of the fluid flow. Unsurprisingly, the dimension of the global attractor is related to the number of degrees of freedom needed to capture the smallest dissipative structures of the flow according to Kolmogorov's theory. 

In this work we are interested in the properties of the Navier-Stokes-$\alpha$ equations in the limit  as $\alpha$ approaches zero. In particular, we will study the properties of the Galerkin solutions of the Navier-Stokes-$\alpha$ equations and their relations with the solutions of the Navier-Stokes equations. The Galerkin approximation is performed by using the eigenfunctions associated to the Stokes operator. We will show local- and global-in-time error estimates\footnote{By abuse of nomenclature, we use  local- and global-in-time  estimates to make reference to estimates on $[0,T]$ for $0<T<\infty$ and $T=\infty$, respectively.} in the $L^\infty(0,T; \L^2(\Omega))$ norm, for $0<T<\infty$ and $T=\infty$,  between the Galerkin approximation of the Navier-Stokes-$\alpha$ equations and the solution of the Navier-Stokes equations in terms of the eigeinvalues and the parameter $\alpha$.     
It is widely believed that global-in-time error estimates should not hold without assuming any additional property of the solution of the Navier-Stokes equations. Even if one assumes global-in-time bounds for the solution being approximated, the best general error estimates predict an asymptotically increasingly accurate approximation as time goes to $\infty$. In order to avoid such an undesirable circumstance one must introduce the concept of \textit{stability} for solutions of the Navier-Stokes equations related to the decay of perturbations at infinite. This way we will be able to prove that the Galerkin solution approximates the exact solution uniformly in time, even if such a solution reaches the global attractor, without losing accuracy. 

The remainder of this paper is organized as follows. We present the Navier-Stokes-$\alpha$ equations on bounded domains with no-slip boundary conditions by means of the Leray regularization using the Helmholtz operator in Section $2$.  In Section $3$, we introduce some short-hand notation and cite some useful known results. In Section $4$, we give a brief overview of the mathematical results presented in this paper. Section $5$ studies local-in-time error estimates. This is broken into two subsections. In Section $5.1$, local-in-time a priori energy estimates are established for the Galerkin approximations and for the solution to be approximated of the Navier-Stokes equations as a consequence of passing to the limit. Then Theorem \ref{Th1} is proved in Section $5.2$. Section $6$ is devoted to demonstrating global-in-time error estimates. We again broke this section into four subsections. In Section $6.1$, global-in-time a priori energy estimates for the Galerkin approximations are showed. In Section $6.2$ the notion of perturbations in the $\L^2(\Omega)$ sense is introduced. Auxiliary results are presented in Section $6.3$. Then Theorem \ref{Th2} is demonstrated in Section $6.4$. In Section $7$  we end up with several concluding remarks.


\section{The model}
The Navier-Stokes equations for the flow of a viscous, incompressible, Newtonian fluid can be written as
\begin{equation}\label{NS-PDE}
\left\{
\begin{array}{rclcc}
\displaystyle
\partial_t\u-\nu \Delta \u + (\u\cdot \nabla) \u  +
\nabla p &=&  \f & \mbox{ in }& \Omega\times (0, T), 
\\
\nabla\cdot \u&=&0 &\mbox{ in } & \Omega\times (0, T),
\end{array}
\right.
\end{equation}
with $ \Omega$ being a bounded domain of $\R^d$, $d=2$ or $3$, and with $0<T<+\infty$ or $T=+\infty$. Here $\u: \Omega\times (0,T)\to \R^d$ represents the incompressible fluid velocity and $p: \Omega\times (0,T)\to \R $ represents the fluid pressure. Moreover, $\f$ is the external force density which acts on the system, and $\nu > 0$ is the kinematic fluid viscosity.

These equations are supplemented by the no-slip boundary condition
\begin{equation}\label{boundary-condition}
\u={\boldsymbol 0}\quad \mbox { on } \quad  \partial \Omega\times (0,T),
\end{equation}
and the initial condition
\begin{equation}\label{initial-condition}
\u(0)=\u_0\quad\mbox{ in }\quad \Omega. 
\end{equation}

Next we will present the Navier-Stokes-$\alpha$ equations on bounded domains by using the Leray approach with the Helmholtz regularization \cite{Guermond-Oden-Prudhomme}. First of all, we write
$$
(\u\cdot\nabla)\u=-\u \times (\nabla\times\u)+\frac{1}{2}\nabla(\u\cdot\u).
$$
Then system (\ref{NS-PDE}) reads as 
$$
\left\{
\begin{array}{rclcc}
\displaystyle
\partial_t\u-\nu \Delta \u - \u\times(\nabla\times\u) +
\nabla p' &=&  \f & \mbox{ in }& \Omega\times (0, T), 
\\
\nabla\cdot \u&=&0 &\mbox{ in } & \Omega\times (0, T),
\end{array}
\right.
$$
where $p'=p+\frac{1}{2}\nabla(\u\cdot\u)$. Next we apply the Leray regularization with the Helmholtz operator to find
\begin{equation}\label{NS-alpha}
\left\{
\begin{array}{rclcc}
\displaystyle
\partial_t\u-\nu \Delta \u -\v\times(\nabla\times\u) +
\nabla p' &=&  \f & \mbox{ in }& \Omega\times (0, T), 
\\
\nabla\cdot \u&=&0 &\mbox{ in } & \Omega\times (0, T),
\end{array}
\right.
\end{equation}
where $\v$ is defined as
\begin{equation}\label{Helmholtz}
\left\{
\begin{array}{rcccl}
\v-\alpha^2\Delta\v+\nabla\pi&=&\u& \mbox{ in }&\Omega\times (0,T), 
\\ 
\nabla\cdot \v&=&0& \mbox{ in }&\Omega\times (0,T), 
\\
\v&=&0&\mbox{ on }&\partial\Omega\times (0,T), 
\end{array}
\right.
\end{equation}
with $\alpha>0$ being the regularization parameter. 

In the definition of the pair $(\v, \pi) $ we observe the first difference between the periodic and non-periodic case. 
For periodic domains, this null-space of the Laplacian is only made of constant functions; therefore, working in mean-free spaces, one finds that $\pi\equiv0$. Hence, the Stokes and the Laplace operator do coincide, apart from the domain of definition. Instead, for non-periodic domains, the pseudo-pressure $\pi$ is used to rule out a much wider class of functions; so the Stokes and the Laplace operator are different.   

System (\ref{NS-alpha})-(\ref{Helmholtz}) together with (\ref{boundary-condition}) and (\ref{initial-condition}) is called the Cauchy problem for the Navier-Stokes-$\alpha$ equations on boundary domains with no-slip boundary conditions. It is clear that if one considers $\alpha=0$, one recovers the Cauchy problem for the Navier-Stokes equations. 

One may rewrite (\ref{NS-alpha}) in terms of $\v$ only, by a  direct substitution, and so one finds the original Navier-Stokes-$\alpha$ system of PDEs:    
\begin{equation}\label{NS-alpha-II}
\left\{
\begin{array}{rclcc}
\displaystyle
\partial_t(\v -\alpha^2\Delta \v)-\nu \Delta(\v-\alpha^2\Delta\v)- \v\times(\nabla\times(\v-\alpha^2 \Delta \v))  +
\nabla p'' &=&  \f & \mbox{ in }& \Omega\times (0, T), 
\\
\nabla\cdot \v&=&0 &\mbox{ in } & \Omega\times (0, T),
\end{array}
\right.
\end{equation}
where $p''=p'+\partial_t\pi+\Delta\pi$. Observe that $\nabla\times\nabla \pi=0$ have been used.


In \cite{Chen-et-al_I, Chen-et-al_II, Chen-et-al_III, Foias-et-al} system (\ref{NS-alpha-II}) was derived on domains that do not have a boundary. For that reason, system (\ref{NS-alpha-II}) is typically studied in the absence of boundary conditions (e.g. in the $d$-dimensional torus $\Omega= \mathds{T}^d$ or  the whole space $\Omega=\R^d$). This sort of domains are less physical interest but provide sometimes a convenient slightly simplified model which decouples the equations from the boundary and makes easier somewhat the mathematical analysis. But, boundaries are of importance in many engineering applications. 

A key reason that system (\ref{NS-alpha})-(\ref{Helmholtz}) is preferred over system (\ref{NS-alpha-II}) on bounded domains is the fact that system (\ref{NS-alpha-II}) needs to be complete with an extra boundary condition for $-\Delta \u$ due to the presence of the bi-Laplacian operator. 
At this point we need to make two observations regarding such a boundary condition because some care must be taken in choosing it. Introducing a boundary condition for $-\Delta\u$ may lead to either the initial boundary-value problem for (\ref{NS-alpha-II}) being ill-posed or   phenomena near the wall being unrealistic. For instance, one may consider homogeneous Dirichlet boundary conditions for both $\v$ and $\Delta\v$, i.e.,
\begin{equation}\label{boundary-condition-bi-Laplacian}
\v={\boldsymbol 0}\quad \mbox{ and }\quad -\Delta\v={\boldsymbol 0}\quad \mbox { on } \quad  \partial \Omega\times(0,T).
\end{equation}
These boundary conditions give rise to an overdetermined problem \cite{Ladyzhenkaya} due to the incompressibility condition. It is well to highlight, here, that the boundary conditions to be imposed for (\ref{NS-alpha}) and (\ref{Helmholtz}) are $\u=\v={\boldsymbol 0}$ on $\partial\Omega\times (0,T)$ or equivalently $\u=A\u={\boldsymbol 0}$ on $\partial\Omega\times (0,T)$, where $A$ stands for the Stokes operator. The reader is referred to \cite{Shkoller, Marsden-Shkoller} for a detailed discussion of the boundary conditions for the Navier-Stokes-$\alpha$ equations on bounded domains.  It is important to observe that system (\ref{NS-alpha})-(\ref{Helmholtz}) is totally equivalent to the one presented in \cite{Shkoller, Marsden-Shkoller}. 


As discussed in Section 1, there is a connection between the Navier-Stokes-$\alpha$ equations and  the grade-two fluid equations, which are  (\ref{NS-alpha-II}) with $-\nu\Delta\v$ rather than $-\nu\Delta(\v-\alpha^2\Delta\v)$, derived from the continuum mechanical principle of material frame-indifference \cite{Truesdell-Noll}. In this context, the constant $\alpha$ is a material parameter measuring the elastic response of the fluid. The sign of $\alpha$ is determined by applying the Clausius-Duhem inequality together with the fact that the free energy must have a stationary point in equilibrium \cite{Dunn-Fosdick} so that the grade-two fluids are compatible with thermodynamics. We refer the reader to \cite{Dunn-Rajagopal} for a detailed discussion on the sign of $\alpha$.
In this case, there is no need of any extra boundary condition for $-\Delta\u$. 
\subsection{Previous works}
Rautmann \cite{Rautmann} initialized the study of error estimates for the spectral Galerkin approximations of the Navier-Stokes equations. His results were local in time since the bounds have no meaning as time goes to infinity. Heywood \cite{Heywood} noted that further assumptions were necessary in order to yield global-in-time error estimates. This additional assumption concerns {\it stability} of the solution of the Navier-Stokes equations. Heywood formulated the stability condition in terms of the $\H^1(\Omega)$ norm and gave global-in-time error estimates in the same norm. Later Salvi \cite{Salvi} obtained global-in-time error estimates in the $\L^2(\Omega)$-norm by assuming stability in the same norm.    

Similar programs to that of this work were performed for the density-dependent Navier-Stokes equations \cite{Braz-Rojas} and the Kazhikhov-Smagulov equations \cite{Gutierrez-Rojas}. Global-in-time error estimates for the Galerkin approximations were derived in $\H^1(\Omega)$ for the velocity under the assumption of stability in the $\H^1(\Omega)$ norm. The density, in both models, plays an important role in defining the concept of stability.

Foias et al. proved the global-in-time existence and uniqueness of regular solutions to the Navier-Stokes-$\alpha$ equations with periodic boundary conditions in \cite{Foias-Holm-Titi}. Later in \cite{Marsden-Shkoller} Marsden and Shkoller established the same results on domains with boundary. 
  
The first convergence analysis between the Navier-Stokes-$\alpha$ and the Navier-Stokes equations as $\alpha$ approaches to zero was undertaken in \cite{Foias-Holm-Titi}. There it was established that there exists a subsequence for which the regular solutions of the Navier-Stokes-$\alpha$ equations converge strongly in the $L^2_{\rm loc}(0,\infty; \L^2(\mathds{T}^3))$ norm to a weak solution of the Navier-Stokes equations. In this work no convergence rate was provided. In this sense, in \cite{Chen-Guenther-Kim-Thomann-Waymire}, the convergence rate in the $L^1(0,T; \L^2(\mathds{T}^3))$ norm was proved to be of order $\mathcal{O}(\alpha)$ for small initial data in Besov-type function spaces in which global existence and uniqueness of solutions for the Navier-Stokes equations can be established. But this convergence rate deteriorates as $T$ goes to $\infty$. In \cite{Cao-Titi} the convergence rate of solutions of various $\alpha$-regularization models to weak solutions of the Navier-Stokes equations is given in the $L^\infty(0,T; \L^2(\mathds{T}^2))$ norm being of order of $\mathcal{O}(\alpha (\log \frac{1}{\alpha})^{\frac{1}{2}})$. In addition to these results, error estimates for the Galerkin approximation  of the Leray-$\alpha$ equations were presented in the $L^\infty(0,T; \L^2(\mathds{T}^2))$ norm being of order of $\mathcal{O}(\frac{1}{\lambda_{n+1}} (\log \lambda_{n+1})^{\frac{1}{2}})$, under the assumption $\alpha^2\lambda_{n+1}<1$,  where $\lambda_{n+1}$ is the $(n+1)${th} eigenvalue of the Stokes operator. In particular, the relation between the eigenvalue $\lambda_{n+1}$ and the regularization parameter $\alpha$ means that the dimension of smaller scales, which is captured by the Navier-Stokes-$\alpha$ equations, and the number of degrees of freedom needed to compute the Galerkin approximations are related. The situation would be more favorable if we could avoid such a relation since one can independently approximate either a solution of the Navier-Stokes equations or a solution of  the Navier-Stokes-$\alpha$ equations. This fact is connected with the regularity of the solution being approximated as we will see in this work. As a result of improving the regularity, the logarithmic factor is removed.

The existence of the global attractor for the Navier-Stokes-$\alpha$ equations, as well as estimates for the Hausdorff and fractal dimensions, in terms of the physical parameters of the equations, were established in \cite{Foias-Holm-Titi}.  Vishik et al.  \cite{Vishik-Titi-Chepyzhov} proved the convergence of the trajectory attractor of the Navier-Stokes-$\alpha$ equations to the trajectory attractor of the three-dimensional Navier-Stokes equations as $\alpha$ approaches zero.
\subsection{The contribution of this paper}
Let us highlight the main contribution of this paper and how it differs form existing work. Principally we compare our work with that of Cao and Titi \cite{Cao-Titi}.

\begin{enumerate}
\item The framework in the present paper is that of the Navier-Stokes equations on two-dimensional bounded domains with non-slip boundary conditions. Here one finds the first difference with the work of Cao and Titi \cite{Cao-Titi} which is carried out on the two-dimensional torus with  periodic boundary conditions.

\item First, we directly derive a local-in-time estimate for the error $\u^\alpha_n-\u$ in the $L^\infty(0,T; \L^2(\Omega))$ norm with $\u^\alpha_n$ and $\u$ being the Galerkin approximation of the Navier-Stokes equations and the solution to the Navier-Stokes equations, respectively. Instead, in \cite{Cao-Titi}, this error estimate is obtained in two steps. First, the convergence rate for $\u-\u^\alpha$ is obtained where $\u^\alpha$ is the solution to the Navier-Stokes-$\alpha$ equations. Then, the error estimate for $\u^\alpha_n-\u$ is proved.


\item Our local-in-time error estimate takes the form 
$$
\|\u^\alpha_n(t)-\u(t)\|^2\le K(t)(\lambda_{1}^{-\frac{1}{2}}\alpha^2+\lambda_{n+1}^{-\frac{3}{2}}),
$$
with $K$ being a function with exponential growth in time and depending only on problem data. This error estimate is only optimal with respect to the regularization parameter $\alpha$. See Section~\ref{Remarks} for optimal error estimates with respect to the eigenvalues.  
In \cite{Cao-Titi}, under the assumption $\alpha^2\lambda_{n+1}<1$, the local-in-time error estimate is of the form   
$$
\begin{array}{rcl}
\|\u^\alpha_n(t)-\u(t)\|^2&\le& 2( \|\u(t)-\u^\alpha(t)\|^2+\|\u^\alpha(t)-\u^\alpha_n(t)\|^2)
\\
&\le&\displaystyle K_1(t) \alpha^2 \log \frac{1}{\alpha}+K_2(t)\frac{1}{\lambda_{n+1}^{2}} \log \lambda_{n+1},
\end{array} 
$$
with $K_1$ being a function with exponential growth in time and depending only on problem data.   This error estimate result turns out to be suboptimal with respect to $\alpha$ and $\lambda_{n+1}$. The relation between $\alpha$ and $\lambda_{n+1}$ avoid approximating independently either a solution of the Navier-Stokes-$\alpha$ or the Navier-Stokes equations.

\item It is clear that local-in-time error estimates are meaningless for large time. For that reason, our second result is a global-in-time error estimate which we prove with the help of the stability of solutions of the Navier-Stokes equations. As far as we are concerned, this sort of results is the first time that are addressed in the literature for the Navier-Stokes-$\alpha$ equations.   
\item In proving the local- and global-in-time error estimates we do not use the extra regularity of the Galerkin approximations coming from the hyperviscosity term in the Navier-Stokes-$\alpha$ equations to control the hyperstress term as done in \cite{Cao-Titi}. This gives a hint about how to prove a similar result for the grade-two fluid equations which do not present such a hyperviscosity term.

\end{enumerate}


\section {Notation and preliminaries}
In this section we shall collect some standard notation and preparatory results that will be used throughout this work.


\begin{enumerate}
\item[(H1)] Let $\Omega$ be a bounded domain of $\mathds{R}^2$ whose boundary $\partial\Omega$ is of class $C^{2,1}$, i.e., the boundary $\partial\Omega$ has a finite covering such that in each set of the covering the boundary $\partial\Omega$ is described by an equation $x_N=F(x_1, ..., x_{N-1})$ in some orthonormal basis, with $F$ being a Hölder-continuous function of order $2$ with exponent $1$, and the domain $\Omega$ is on one side of the boundary, say $x_N > F (x_1,...,x_{N-1})$.
\end{enumerate}

We denote by $L^p(\Omega)$, with $1\le p \le \infty$, and $H^m(\Omega)$, with $m\in \mathds{N}$, the usual Lebesgue and Sobolev spaces on $\Omega$ provided with the usual norm $\|\cdot\|_{L^p(\Omega)}$ and $\|\cdot\|_{H^{m}(\Omega)}$ with respect to Lebesgue measure. In the $L^2(\Omega)$ space,  the inner product  and norm are denoted by $(\cdot, \cdot)$ and  $\|\cdot\|$, respectively. Let $ C^\infty_0 (\Omega)$ be functions defined on $\Omega$ and having continuous derivatives of any order with compact support in $\Omega$. Boldfaced letters will be used to denote vector spaces and their elements. We will use $C$, with or without subscripts, to denote generic constants independent of all problem data. Moreover, $E$ and $K$ stand for constants depending on all problem data.   

We now give several function spaces developed in the theory of Navier-Stokes. Thus we denote as  
$$\boldsymbol{\vartheta}=\{\v\in \boldsymbol{C}^\infty_0(\Omega): \nabla\cdot\v=0 \mbox{ in } \Omega \}. $$
Then the spaces $\H$ and $\V$ are the closure in the $\L^2(\Omega)$ and $\H^1(\Omega)$ norm,  respectively, characterized by
$$
\begin{array}{lll}
\H&=& \{ \u \in \L^2(\Omega) : \nabla\cdot\u =0 \mbox{ in } \Omega, \u\cdot\boldsymbol{n} = 0 \hbox{
on }
\partial\Omega \},
\\
{\V}&=& \{\u \in \H^1(\Omega) : \nabla\cdot\u =0 \mbox{ in } \Omega, \u = {\bf 0}
\hbox{ on } \partial\Omega \},
\end{array}
$$
where $\n$ is the outward unit normal vector to $\partial \Omega$. This characterization is valid under $(\rm H1)$. 

Let $-\infty\le a <b\le +\infty$ and let $X$ be a Banach space. Then
$L^p(a,b;X)$ denotes the space of the equivalence class of Bochner-measurable,  $X$-valued functions on $(a, b)$
such that $\int_a^b\|f(s)\|^p_{X} {\rm d} s<\infty$ for $1\le
p<\infty$ or ${\rm ess}\sup_{s\in(a,b)}\|f(s)\|_X<\infty$ for
$p=\infty$. Moreover, 
$H^1(a,b; X)$ is the space of the equivalence class of
$X$-valued functions such that $(\int_a^b\|f(s)\|^2_X+\|\frac{d}{ds}f(s)\|^2_X{\rm d} s)^{1/2}<\infty$.  

We let $P: \L^2(\Omega)\to \H$ be the Helmholtz-Leray orthogonal projection operator and let $A: D(A)\subset \H \to \H$ be the Stokes operator defined as $A=-P\Delta$ where $D(A)=\V\cap\H^2(\Omega)$.

The next lemma is about the stability of the Helmholtz-Leray operator. See \cite[p.18]{Temam}. 
\begin{lemma}\label{le:Temam} For $\u\in \H^1(\Omega)$, $\|P\u\|_{H^1(\Omega)}\le \|\u\|_{H^1(\Omega)}$.
\end{lemma}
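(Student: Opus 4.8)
The plan is to exploit the very decomposition that defines $P$. Writing $\u = P\u + \nabla q$ with $\nabla q = (I-P)\u$, and taking the divergence and the normal trace, one sees that $q$ solves the Neumann problem $\Delta q = \nabla\cdot\u$ in $\Omega$ with $\partial_{\n} q = \u\cdot\n$ on $\partial\Omega$. Since the decomposition is $\L^2(\Omega)$-orthogonal, $\|P\u\|^2 + \|\nabla q\|^2 = \|\u\|^2$, so $\|P\u\|\le\|\u\|$ for free and the whole question reduces to the gradient seminorm, i.e. to proving $\|\nabla(P\u)\|\le\|\nabla\u\|$.

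For the seminorm I would invoke the two-dimensional div--curl identity: for $\v\in\H^1(\Omega)$ one has, pointwise, $|\nabla\v|^2 = (\nabla\cdot\v)^2 + (\nabla\times\v)^2 - 2\det(\nabla\v)$, where the last term is a null Lagrangian whose integral over $\Omega$ reduces to a boundary integral depending only on the trace of $\v$. The decisive observation is that $P\u$ and $\u$ differ by a gradient, hence $\nabla\times(P\u) = \nabla\times\u$, while simultaneously $\nabla\cdot(P\u) = 0$. Consequently, up to the Jacobian boundary terms, $\|\nabla(P\u)\|^2$ coincides with $\|\nabla\times\u\|^2$, which is majorized by $\|\nabla\times\u\|^2 + \|\nabla\cdot\u\|^2$ and therefore by $\|\nabla\u\|^2$; this is exactly the mechanism that yields a contraction constant equal to one rather than a generic $C>1$.

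To make this rigorous I would first take $\u$ smooth, say $\u\in\H^2(\Omega)$, so that elliptic regularity for the Neumann problem on the $C^{2,1}$ domain granted by (H1) gives $q\in H^2(\Omega)$ and every integration by parts below is legitimate; the general case $\u\in\H^1(\Omega)$ then follows by density together with the $\L^2$-bound already established. Concretely, I would expand $\|\nabla\u\|^2 = \|\nabla(P\u)\|^2 + 2\int_\Omega \nabla(P\u){:}\,\nabla^2 q + \|\nabla^2 q\|^2$ and integrate the cross term by parts, using $\nabla\cdot(P\u)=0$ and $(P\u)\cdot\n=0$ to cancel all interior contributions, so that it collapses to a boundary integral.

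The step I expect to be the main obstacle is the treatment of that boundary integral. The two Jacobian integrals, for $\u$ and for $P\u$, do not cancel termwise because $\u$ and $P\u$ share only their \emph{normal} trace — both vanish there — and differ in their \emph{tangential} trace by $\nabla_\tau q$. One is thus left with a boundary term that involves the second fundamental form of $\partial\Omega$, and the crux is to show that this curvature contribution combines with the remaining quadratic terms to produce a nonnegative quantity, so that $\|\nabla(P\u)\|\le\|\nabla\u\|$ survives with constant one. Controlling the sign and regularity of this boundary term is precisely where the $C^{2,1}$ hypothesis (H1) on $\partial\Omega$ is needed, and it is the only genuinely delicate point of the proof.
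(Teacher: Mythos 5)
Most of your scaffolding is sound: the Neumann characterization of $q$, the $\L^2(\Omega)$ contraction, the reduction to the gradient seminorm, and the pointwise identity $|\nabla\v|^2=(\nabla\cdot\v)^2+(\nabla\times\v)^2-2\det(\nabla\v)$ are all correct. But the proof never actually happens: the step you yourself call ``the crux'' --- showing that the Jacobian/curvature boundary terms combine into something nonnegative --- is left unproven, and it cannot be proven, because the inequality with constant exactly $1$ is false. Note first a factual error that is not incidental: for general $\u\in\H^1(\Omega)$ the normal traces of $\u$ and $P\u$ do \emph{not} both vanish; only $(P\u)\cdot\n=0$ is guaranteed, while $\u\cdot\n=\partial_{\n}q$ is arbitrary, so the two Jacobian integrals differ by more than a tangential discrepancy. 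This is exactly the door through which a counterexample walks. On the unit disk take $q=x_1x_2$ (harmonic, so $\nabla q$ is divergence free but has nonzero normal trace) and $\w=\nabla^{\perp}\psi=(-\partial_2\psi,\partial_1\psi)$ with $\psi=-\tfrac12(1-|x|^2)(x_1^2-x_2^2)$; then $\nabla\cdot\w=0$ and $\w\cdot\n=-\partial_{\tau}\psi=0$ on $\partial\Omega$, so $\w\in\H\cap\H^1(\Omega)$, while $(\v,\nabla q)=0$ for every $\v\in\H$. Hence $\u:=\w-\tfrac12\nabla q$ satisfies $P\u=\w$, and moreover $\nabla\times\u=\nabla\times\w$ and $\nabla\cdot\u=0$, so by your own identity any difference between $\|\nabla (P\u)\|^2$ and $\|\nabla\u\|^2$ comes entirely from the Jacobian terms. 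Computing, $(\nabla\w,\nabla^2 q)=\int_\Omega(6|x|^2-2)\,{\rm d}x=\pi$, $\|\nabla^2 q\|^2=2\pi$, $\|\nabla q\|^2=\pi/2$, so with $\epsilon=-\tfrac12$,
\[
\|\u\|^2_{H^1(\Omega)}=\|\w\|^2_{H^1(\Omega)}+2\epsilon\,(\nabla\w,\nabla^2 q)+\epsilon^2\bigl(\|\nabla^2 q\|^2+\|\nabla q\|^2\bigr)=\|P\u\|^2_{H^1(\Omega)}-\tfrac{3\pi}{8}.
\]
Thus $\|P\u\|_{H^1(\Omega)}>\|\u\|_{H^1(\Omega)}$ on a smooth, even convex, domain: the Jacobian terms have the wrong sign, and in general their sign involves the curvature of $\partial\Omega$ and the nonzero normal trace of $\u$, neither of which (H1) controls. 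No refinement of your boundary analysis can repair this.

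For comparison, the paper does not prove Lemma \ref{le:Temam} at all; it cites \cite[p.18]{Temam}. The statement that is actually available --- and the only one the paper needs, since the lemma enters only through generic-constant estimates (the term $L_9$ in the proof of Lemma \ref{lm:stabH1-z}) --- is $\|P\u\|_{H^1(\Omega)}\le C(\Omega)\|\u\|_{H^1(\Omega)}$ with a domain-dependent constant. Its proof is the one your first step already sets up, finished by elliptic theory rather than by pointwise identities: $q$ solves the Neumann problem $\Delta q=\nabla\cdot\u$, $\partial_{\n}q=\u\cdot\n$, and $H^2$ regularity on the $C^{2,1}$ domain of (H1), together with the trace theorem, gives $\|\nabla q\|_{\H^1(\Omega)}\le C(\Omega)\bigl(\|\nabla\cdot\u\|+\|\u\cdot\n\|_{H^{1/2}(\partial\Omega)}\bigr)\le C(\Omega)\|\u\|_{\H^1(\Omega)}$, whence the bound for $P\u=\u-\nabla q$. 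If you insist on a genuine contraction (constant $1$), you must change the hypotheses, not the proof: for instance, for $\u\in\H^1_0(\Omega)$ on a \emph{convex} domain the relevant boundary term reduces to $\int_{\partial\Omega}\kappa\,(\partial_{\tau}q)^2\ge 0$ and your scheme closes; for the lemma as stated, only the version with $C(\Omega)$ is true.
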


The following two lemmas collect some properties of the Stokes operator $A$. For a proof, see e.g. \cite[Chapter 4]{Constantin-Foias}.
\begin{lemma} It follows that:
\begin{enumerate}[$(i)$]
\item The operator $A$ is bijective, self-adjoint, and positive definite.
\item The  operator $A^{-1}$ is injective, self-adjoint, and compact in $\H$.
\item There exist a set of eigenvalues $\{\lambda_n\}_{n=1}^{\infty}$ and a basis of eigenfunctions $\{\w_n\}_{n=1}^{\infty}$ satisfying
\begin{enumerate}
\item $A\w_n=\lambda_n\w_n$ with $\w_n\in D(A)\cap \H^2(\Omega)$.
\item $0<\lambda_1<\cdots\le\lambda_n\le\lambda_{n+1}\le \cdots$.
\item $\lim_{n\to \infty}\lambda_n=\infty$.
\item There exists a constant $C>0$ such that $\lambda_n\ge C n \lambda_1$.

\end{enumerate}
\end{enumerate}
\end{lemma}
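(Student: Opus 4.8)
The plan is to reduce the whole statement to the spectral theory of the compact self-adjoint operator $A^{-1}$, and to treat $(i)$ first by the variational method. I would introduce the symmetric bilinear form $a(\u,\v)=(\nabla\u,\nabla\v)$ on $\V\times\V$, which is bounded and, thanks to the Poincar\'e inequality, coercive on $\V$. Given $\f\in\H$, the Lax--Milgram theorem then produces a unique $\u\in\V$ solving $a(\u,\v)=(\f,\v)$ for every $\v\in\V$; the $\H^2$-regularity theory for the stationary Stokes problem, available precisely because $\partial\Omega$ is of class $C^{2,1}$ by $(\mathrm{H1})$, upgrades this to $\u\in\V\cap\H^2(\Omega)=D(A)$ with $A\u=\f$, so that $A\colon D(A)\to\H$ is a bijection. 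Positive definiteness is read off from $(A\u,\u)=\|\nabla\u\|^2>0$ for $\u\neq\0$, and self-adjointness I would obtain indirectly: the inverse $A^{-1}$ is everywhere defined on $\H$, bounded (by the Lax--Milgram estimate together with Poincar\'e) and symmetric, hence self-adjoint, so that $A=(A^{-1})^{-1}$ is self-adjoint as well. This simultaneously settles the injectivity and self-adjointness of $A^{-1}$ claimed in $(ii)$.

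For the remaining compactness in $(ii)$ and the spectral claims in $(iii)$, I would note that $A^{-1}$ maps $\H$ boundedly into $\V$ by the Lax--Milgram estimate, while the embedding $\V\hookrightarrow\H$ is compact by the Rellich--Kondrachov theorem since $\Omega$ is bounded; their composition $A^{-1}$ is therefore compact on $\H$. The spectral theorem for compact, self-adjoint, positive operators applied to $A^{-1}$ then yields a nonincreasing sequence of positive eigenvalues $\mu_n\to0$ and an orthonormal basis $\{\w_n\}$ of $\H$ consisting of eigenfunctions; setting $\lambda_n=\mu_n^{-1}$ gives $A\w_n=\lambda_n\w_n$, the positivity and ordering $0<\lambda_1\le\lambda_2\le\cdots$ of $(iii)(b)$ and the divergence $\lambda_n\to\infty$ of $(iii)(c)$. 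The extra regularity $\w_n\in D(A)\cap\H^2(\Omega)$ in $(iii)(a)$ follows once more from the Stokes regularity theory applied to $A\w_n=\lambda_n\w_n$.

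The hard part will be the quantitative growth bound $(iii)(d)$, which the spectral theorem alone does not provide. Here I would use the Courant--Fischer min--max characterization
\[
\lambda_n=\min_{\substack{S\subset\V\\ \dim S=n}}\ \max_{\v\in S\setminus\{\0\}}\frac{\|\nabla\v\|^2}{\|\v\|^2}.
\]
Because $\V$ is a subspace of the divergence-free fields in $\H^1_0(\Omega)$, the minimum runs over strictly fewer subspaces than for the full vector Dirichlet Laplacian, whence $\lambda_n\ge\tilde\mu_n$, where $\tilde\mu_n$ denotes the $n$th eigenvalue of $-\Delta$ with homogeneous Dirichlet conditions. It then remains to show that $\tilde\mu_n$ grows at least linearly in $n$: by domain monotonicity of Dirichlet eigenvalues, enclosing $\Omega$ in a square $Q\supset\Omega$ gives $\tilde\mu_n\ge\tilde\mu_n(Q)$, and the eigenvalues on $Q$ are explicit and obey $\tilde\mu_n(Q)\ge c\,n$ in dimension two by the elementary lattice counting behind Weyl's law. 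Combining these estimates yields $\lambda_n\ge c\,n$, and dividing by the fixed first eigenvalue produces $\lambda_n\ge C\,n\,\lambda_1$, which is $(iii)(d)$.

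Overall I expect $(i)$--$(iii)(c)$ to be routine consequences of Lax--Milgram, the compact Sobolev embedding and the spectral theorem, so the genuine obstacle is $(iii)(d)$: it is the only part that invokes the two-dimensional eigenvalue asymptotics and the comparison with the Dirichlet Laplacian, and the constant $C$ it produces depends on $\Omega$ only through an enclosing square.
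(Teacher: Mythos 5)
You should know at the outset that the paper does not prove this lemma at all: it is quoted as a collection of known facts with a pointer to Chapter 4 of Constantin--Foias, so there is no internal argument to compare against. Your proposal is a correct, self-contained reconstruction along the same standard lines as that reference: Lax--Milgram for the form $(\nabla\u,\nabla\v)$ on $\V$, Cattabriga-type $\H^2$ regularity for the stationary Stokes system (this is precisely where the $C^{2,1}$ boundary of (H1) enters) to identify the weak solution operator with $A^{-1}$ and obtain bijectivity, the bounded-symmetric-everywhere-defined argument for self-adjointness of $A^{-1}$ and hence of $A$, Rellich--Kondrachov for compactness, the spectral theorem for $(iii)(a)$--$(c)$, and min--max plus domain monotonicity plus lattice counting for $(iii)(d)$. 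Two bookkeeping corrections to the last step. First, the operator you compare with in the min--max inequality is the componentwise Dirichlet Laplacian on $\H^1_0(\Omega)^2$, whose $n$th eigenvalue is the \emph{scalar} Dirichlet eigenvalue $\tilde\mu_{\lceil n/2\rceil}$, since in dimension two each scalar eigenvalue occurs twice; so the comparison reads $\lambda_n\ge\tilde\mu_{\lceil n/2\rceil}$, not $\lambda_n\ge\tilde\mu_n$. This only halves the constant, and the linear growth you need survives. Second, your final constant is $C=c/\lambda_1$ with $c$ coming from the enclosing square, so it depends on $\Omega$ both through that square and through $\lambda_1$ itself; it is dimensionless and dilation invariant, but not universal --- and no universal constant is possible (on a thin rectangle $(0,L)\times(0,\varepsilon)$ there are on the order of $L/\varepsilon$ Stokes eigenvalues below a fixed multiple of $\lambda_1$), so your reading, with $C$ shape-dependent, is the only tenable one despite the paper's blanket convention that $C$ denotes data-independent constants. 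A last cosmetic point: the spectral theorem yields $0<\lambda_1\le\lambda_2\le\cdots$; the strict inequality $\lambda_1<\lambda_2$ displayed in item $(b)$ of the statement cannot be proved in general, since the first Stokes eigenvalue need not be simple --- that is loose notation in the lemma, not a gap in your argument.
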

Let $\beta>0$. Define the operator $A^\beta:D(A^\beta)\subset \H\to\H$ such that 
$$A^\beta\u=\sum_{n=1}^\infty \lambda_n^{\beta} (\u, \w_n) \w_n,$$
where
$$D(A^\beta)=\{\u\in \H;  \quad \sum_{n=1}^\infty \lambda_n^{2\beta}|(\u,\w_n)|^2<\infty\}.$$ 
Moreover,  the space $D(A^\beta)$ is endowed with the inner product
$$ (A^\beta\u,A^\beta\v)=\sum_{n=1}^\infty\lambda^{2\beta} u_n v_n,$$  
where $u_n=(\u, \w_n)$ and $v_n=(\v, \w_n)$, and the associated norm
$$\|A^{\beta}\u\|^2=\sum_{n=1}^\infty\lambda_n^{2\beta}|(\u, \w_n)|^2.$$ 
In particular,  $D(A^{1/2})=\V$ and $D(A)=\H^2(\Omega)\cap\V$ hold.

\begin{lemma} The set $\{\w_n\}_{n=1}^{\infty}$ is an orthogonal basis of the spaces $\H$, $D(A^{\frac{1}{2}})$, $D(A)$, and $D(A^\frac{3}{2})$  endowed with the inner products $(\cdot,\cdot)$, $(A^\frac{1}{2}\cdot, A^\frac{1}{2}\cdot)$, $(A\cdot, A\cdot)$, and $(A^\frac{3}{2}\cdot, A^\frac{3}{2}\cdot)$, respectively. 

\end{lemma}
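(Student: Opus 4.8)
The plan is to reduce the entire statement to two ingredients: the spectral theorem applied to the compact, self-adjoint, injective operator $A^{-1}$, and the explicit weighted-$\ell^2$ description of the norm on $D(A^\beta)$ furnished just above the statement. First I would record that, because $A^{-1}$ is injective, self-adjoint, and compact in $\H$, the Hilbert--Schmidt spectral theorem allows the eigenfunctions $\{\w_n\}$ to be chosen mutually orthonormal in $\H$ and guarantees that their linear span is dense in $\H$. This is precisely the claim that $\{\w_n\}$ is an orthogonal basis of $(\H,(\cdot,\cdot))$, so it settles the case $\beta=0$. Throughout I would normalize so that $(\w_m,\w_n)=\delta_{mn}$.

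Next, for $\beta\in\{1/2,1,3/2\}$ I would verify orthogonality in the corresponding inner product by a one-line computation. Writing $u_n:=(\u,\w_n)$, the defining formula $(A^\beta\u,A^\beta\v)=\sum_n\lambda_n^{2\beta}u_nv_n$ gives, upon inserting $\u=\w_m$ and $\v=\w_{m'}$ and using $(\w_m,\w_n)=\delta_{mn}$, the value $\lambda_m^{2\beta}\delta_{mm'}$. Hence the $\w_n$ are pairwise orthogonal in $D(A^\beta)$, with squared norm $\lambda_m^{2\beta}$ (so genuinely orthogonal, not orthonormal, consistent with the statement). In short, orthogonality in $\H$ together with the eigenrelation $A^\beta\w_n=\lambda_n^\beta\w_n$ propagates orthogonality to every weighted inner product for free.

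The substantive step is completeness, i.e. that the span of $\{\w_n\}$ is dense in $D(A^\beta)$ for the norm $\|A^\beta\cdot\|$. Given $\u\in D(A^\beta)\subset\H$, I would expand $\u=\sum_n u_n\w_n$ (convergent in $\H$ by the $\beta=0$ case) and consider the partial sums $S_N=\sum_{n=1}^N u_n\w_n$. From the definition of $A^\beta$ one has $A^\beta(\u-S_N)=\sum_{n>N}\lambda_n^\beta u_n\w_n$, whence $\|A^\beta(\u-S_N)\|^2=\sum_{n>N}\lambda_n^{2\beta}|u_n|^2$. Since membership $\u\in D(A^\beta)$ is by definition the statement that $\sum_n\lambda_n^{2\beta}|u_n|^2<\infty$, this tail tends to $0$ as $N\to\infty$, so $S_N\to\u$ in $D(A^\beta)$. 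Combined with orthogonality, this yields the orthogonal-basis property for each of $D(A^{1/2})=\V$, $D(A)$, and $D(A^{3/2})$.

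I expect no serious obstacle: the lemma is essentially a bookkeeping consequence of the spectral decomposition of $A$. The only point deserving care is to track the correct topology, since the expansion $\u=\sum_n u_n\w_n$ converges a priori only in $\H$; what upgrades it to convergence in the stronger $D(A^\beta)$ norm is exactly the summability condition built into the definition of $D(A^\beta)$, and one must invoke the closedness of $A^\beta$ (immediate from its spectral definition) to justify applying $A^\beta$ term-by-term to the series.
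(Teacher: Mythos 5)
Your proposal is correct, and it is essentially the canonical argument: the paper itself offers no proof of this lemma, deferring instead to Constantin--Foias (Chapter 4), where the same reasoning---the spectral theorem for the compact, self-adjoint, injective operator $A^{-1}$ giving an orthonormal basis of $\H$, orthogonality in $D(A^\beta)$ propagated by the eigenrelation, and completeness via the vanishing tail $\sum_{n>N}\lambda_n^{2\beta}|u_n|^2\to 0$---is used. One small simplification: since the paper \emph{defines} $A^\beta$ by the series $A^\beta\u=\sum_n\lambda_n^\beta(\u,\w_n)\w_n$ on $D(A^\beta)$, the term-by-term identity $A^\beta(\u-S_N)=\sum_{n>N}\lambda_n^\beta u_n\w_n$ is immediate from that definition, so no separate appeal to closedness of $A^\beta$ is needed.
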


It is well-known that the Stokes operator is a maximal monotone operator. Its resolvent $(I+\alpha^2 A)^{-1}$ is well-defined for all $\alpha>0$ and satisfies some properties useful in further developments. We state such properties as a lemma below. See \cite[Chap. 5]{Brezis} for a proof.
\begin{lemma} It follows that: 
\begin{enumerate}[(i)]
\item The operator $(I+\alpha^2 A)^{-1}: \H\to D(A)$ is bounded, linear and self-adjoint with \begin{equation}\label{ResolventL2}
\|(I+\alpha^2 A)^{-1}\|_{{\mathcal L}(\H)}\le 1.
\end{equation}
\item The operator   $ A^{\frac{1}{2}}(I+\alpha^2 A)^{-1}:  \H\to D(A^{\frac{1}{2}})$ is linear and bounded with  
\begin{equation}\label{ResolventH1}
\|A^{\frac{1}{2}}(I+\alpha^2 A)^{-1}\|_{{\mathcal L}(\H ,D(A^{\frac{1}{2}}))}\le 1
\end{equation}
and  
\begin{equation}\label{Resolvent-alphaH1}
\|(\alpha^2A)^{\frac{1}{2}}(I+\alpha^2 A)^{-1}\|_{{\mathcal L}(\H)}\le 1.
\end{equation}
\item The operator  $(\alpha^2 A)(I+\alpha^2 A)^{-1}:  \H\to \H$ is linear and bounded with 
\begin{equation}\label{ResolventH2}
\|(\alpha^2 A)(I+\alpha^2 A)^{-1}\|_{{\mathcal L}(\H)}\le 1.
\end{equation}

\item Furthermore, there holds 
\begin{equation}\label{I-Ja=aAJa}
I-(I+\alpha^2 A)^{-1}=\alpha^2 A (I+\alpha^2 A)^{-1}=\alpha^2(I+\alpha^2 A)^{-1}A.
\end{equation}
\end{enumerate} 
\end{lemma}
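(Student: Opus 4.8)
The plan is to diagonalize every operator appearing in the statement with respect to the orthonormal eigenbasis $\{\w_n\}$ of the Stokes operator $A$ provided by the previous lemma, which reduces each claim to an elementary inequality for a scalar function of a single real variable. Writing $\u=\sum_{n\ge 1}u_n\w_n$ with $u_n=(\u,\w_n)$, one has $A\u=\sum_n\lambda_n u_n\w_n$, so that $(I+\alpha^2A)^{-1}$ is realized as the Fourier multiplier $u_n\mapsto(1+\alpha^2\lambda_n)^{-1}u_n$, while $A^{1/2}$, $(\alpha^2A)^{1/2}$ and $\alpha^2A$ act through the real multipliers $\lambda_n^{1/2}$, $\alpha\lambda_n^{1/2}$ and $\alpha^2\lambda_n$, respectively. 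In this representation linearity is immediate; the self-adjointness asserted in $(i)$ follows because all multipliers are real (equivalently, $I+\alpha^2A$ is a self-adjoint bijection, hence so is its inverse); and the inclusion of the range of $(I+\alpha^2A)^{-1}$ in $D(A)$ follows from the summability $\sum_n\lambda_n^2(1+\alpha^2\lambda_n)^{-2}u_n^2\le\alpha^{-4}\|\u\|^2<\infty$, which at the same time shows that $(I+\alpha^2A)^{-1}\colon\H\to D(A)$ is bounded.

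Each operator-norm estimate then amounts to maximizing a scalar function of the variable $x=\alpha^2\lambda_n>0$ over the spectrum, since for a diagonal self-adjoint operator the squared operator norm equals $\sup_n$ of the square of the corresponding coefficient. For \eqref{ResolventL2} the multiplier is $(1+x)^{-1}\le 1$; for \eqref{ResolventH2} it is $x(1+x)^{-1}<1$; and for \eqref{Resolvent-alphaH1} it is $x^{1/2}(1+x)^{-1}$, whose maximum over $x>0$ equals $\tfrac12$ at $x=1$. In every case the bound $\le 1$ falls out at once. The estimate \eqref{ResolventH1} is obtained by the same reduction, the relevant multiplier being $\lambda_n^{1/2}(1+\alpha^2\lambda_n)^{-1}$; here the only point demanding attention is to measure the image in the norm of the codomain $D(A^{1/2})$.

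For the algebraic identity \eqref{I-Ja=aAJa} I would avoid spectral sums and argue directly at the level of operators. On $D(A)$ one has $I-(I+\alpha^2A)^{-1}=\big[(I+\alpha^2A)-I\big](I+\alpha^2A)^{-1}=\alpha^2A(I+\alpha^2A)^{-1}$, and since $A$ commutes with its own resolvent --- both being diagonal in $\{\w_n\}$, or simply because $(I+\alpha^2A)^{-1}$ is a function of $A$ --- the right-hand side equals $\alpha^2(I+\alpha^2A)^{-1}A$ as well. Because every operator involved has just been shown to be bounded on $\H$, the identity extends from $D(A)$ to all of $\H$ by density.

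The lemma is in essence a piece of bookkeeping, so I do not expect a genuine obstacle; the only delicate points are the correct normalization of the $D(A^{1/2})$ norm in \eqref{ResolventH1} and the matching of domains when passing between $A(I+\alpha^2A)^{-1}$ and $(I+\alpha^2A)^{-1}A$ before extending by continuity. An alternative route, consistent with the cited \cite[Chap.~5]{Brezis}, is to forgo the explicit eigenbasis and appeal to the abstract theory of maximal monotone operators: $(I+\alpha^2A)^{-1}$ is the resolvent $J_{\alpha^2}$, a contraction on $\H$, which gives \eqref{ResolventL2}, while $\alpha^2A(I+\alpha^2A)^{-1}=\alpha^2 A_{\alpha^2}$ is $\alpha^2$ times the Yosida approximation $A_{\alpha^2}=\alpha^{-2}(I-J_{\alpha^2})=AJ_{\alpha^2}$, which yields both \eqref{ResolventH2} and \eqref{I-Ja=aAJa}; the fractional estimates \eqref{ResolventH1} and \eqref{Resolvent-alphaH1} still rely on the self-adjoint functional calculus.
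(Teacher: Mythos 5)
The paper never proves this lemma at all: it states it and refers to \cite[Chap.~5]{Brezis}, i.e.\ to the abstract theory of maximal monotone operators (contractivity of the resolvent, Yosida approximation). Your main argument --- diagonalizing every operator in the Stokes eigenbasis $\{\w_n\}$ supplied by the preceding lemma and reducing each bound to a scalar inequality in $x=\alpha^2\lambda_n$ --- is therefore a genuinely different, self-contained route; your closing paragraph (resolvent $J_{\alpha^2}$, Yosida approximation $\alpha^{-2}(I-J_{\alpha^2})=AJ_{\alpha^2}$) is essentially the proof the paper points to. The spectral route buys concreteness, gives the self-adjointness statements and the identity \eqref{I-Ja=aAJa} for free, and is fully legitimate here because $A^{-1}$ is compact and self-adjoint; the monotone-operator route is more general but imports machinery the paper never develops. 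Your handling of (i), (iii), (iv) and of \eqref{Resolvent-alphaH1} (maximum $\tfrac12$ of $x^{1/2}(1+x)^{-1}$ at $x=1$) is correct.

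There is, however, one point you flag but never close, and it is precisely where the lemma as printed is delicate: \eqref{ResolventH1}. Carry out your own reduction honestly. With the paper's norm $\|A^{1/2}\cdot\|$ on $D(A^{1/2})$, the map $\H\ni\u\mapsto A^{1/2}(I+\alpha^2A)^{-1}\u\in D(A^{1/2})$ has multiplier $\lambda_n(1+\alpha^2\lambda_n)^{-1}$, whose supremum over the spectrum tends to $\alpha^{-2}$ and exceeds $1$ as soon as some $\lambda_n>(1-\alpha^2)^{-1}$, which happens for every $\alpha<1$ since $\lambda_n\to\infty$; measuring the image in the $\H$ norm instead gives the multiplier $\lambda_n^{1/2}(1+\alpha^2\lambda_n)^{-1}$, with supremum about $(2\alpha)^{-1}$, again $>1$ for small $\alpha$. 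So no reading with domain $(\H,\|\cdot\|)$ yields the stated bound $\le 1$ in the regime $\alpha\to0$ that the paper cares about: a literal completion of your step would refute the inequality, not prove it. The version that is true, and the one the paper actually invokes later (in the bounds for $J_8$ and $J_{10}$ and their global-in-time analogues, where $\|A^{1/2}(I+\alpha^2A)^{-1}\z_n^\alpha\|$ is replaced by $\|A^{1/2}\z_n^\alpha\|$), is the contraction property in the $D(A^{1/2})$ norm: for $\u\in D(A^{1/2})$, both operators are diagonal in $\{\w_n\}$, hence commute, so $A^{1/2}(I+\alpha^2A)^{-1}\u=(I+\alpha^2A)^{-1}A^{1/2}\u$, and \eqref{ResolventL2} gives $\|A^{1/2}(I+\alpha^2A)^{-1}\u\|\le\|A^{1/2}\u\|$ --- in your language, the same scalar inequality $(1+x)^{-1}\le1$ applied to the coefficients of $A^{1/2}\u$. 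You should state this interpretation and its one-line proof explicitly, rather than leave it as ``a point demanding attention''; as written, your argument for \eqref{ResolventH1} is incomplete.
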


The next lemma provides  equivalence of norms between $\|A^{\beta}\cdot\|$ and $\|\cdot\|_{H^m(\Omega)}$.
\begin{lemma}[Poincaré]\label{le:Poincare} If $\u\in D(A^{\frac{3}{2}})$, then 
\begin{equation}\label{Poincare}
\|\u\|\le \lambda_1^{-\frac{1}{2}}\|A^{\frac{1}{2}}\u\|\le\lambda_1^{-1} \|A\u\|
\le \lambda_1^{-\frac{3}{2}} \|A^{\frac{3}{2}}\u\|.
\end{equation}
where $\lambda_1$ is the first eigenvalue of the Stokes operator.

Moreover, there exist two constants $C_1, C_2 >0$ such that
$$
\begin{array}{rcccccl}
C_1 \|A^{\frac{1}{2}}\u\|&\le & \|\u\|_{H^1(\Omega)}& \le & C_2 \|A^{\frac{1}{2}}\u\| &\mbox{ for all } &\u\in D(A^{\frac{1}{2}}), 
\\
C_1 \|A \u\|&\le& \|\u\|_{H^2(\Omega)} &\le& C_2 \|A\u\| &\mbox{ for all }& \u\in D(A),
\\
C_1 \|A^{\frac{3}{2}}\u\|&\le& \|\u\|_{H^3(\Omega)} &\le& C_2 \|A^{\frac{3}{2}}\u\| &\mbox{ for all }&  \u\in D(A^{\frac{3}{2}}).
\end{array}
$$
\end{lemma}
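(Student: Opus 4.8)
The plan is to split the statement into two independent parts: first the spectral chain \eqref{Poincare}, which is purely algebraic, and then the three norm equivalences, which rest on elliptic regularity for the Stokes operator. I would emphasize from the outset that the second part is where the hypothesis (H1) on the boundary is actually consumed.

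To prove \eqref{Poincare} I would work entirely in the eigenbasis $\{\w_n\}$. For $\u\in D(A^{3/2})$, writing $u_n=(\u,\w_n)$, the definitions recorded just above the lemma give $\|\u\|^2=\sum_n u_n^2$, $\|A^{1/2}\u\|^2=\sum_n\lambda_n u_n^2$, $\|A\u\|^2=\sum_n\lambda_n^2 u_n^2$, and $\|A^{3/2}\u\|^2=\sum_n\lambda_n^3 u_n^2$. Since $0<\lambda_1\le\lambda_n$ for every $n$, one has the term-by-term bound $\lambda_n^{k}\le\lambda_1^{-1}\lambda_n^{k+1}$; summing against $u_n^2$ yields $\|A^{k/2}\u\|^2\le\lambda_1^{-1}\|A^{(k+1)/2}\u\|^2$ for $k=0,1,2$. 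Taking square roots and composing the three resulting inequalities produces exactly the chain \eqref{Poincare}. This step is routine and carries no obstacle.

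For the norm equivalences the case $\beta=\tfrac12$ is the most self-contained, so I would do it first. Using that $P$ is the self-adjoint Helmholtz--Leray projector fixing $\H$, and integrating by parts with $\u=\0$ on $\partial\Omega$, one gets $\|A^{1/2}\u\|^2=(A\u,\u)=\|\nabla\u\|^2$ for $\u\in\V$. The inequality $C_1\|A^{1/2}\u\|\le\|\u\|_{H^1(\Omega)}$ is then immediate from $\|A^{1/2}\u\|=\|\nabla\u\|\le\|\u\|_{H^1(\Omega)}$, while $\|\u\|_{H^1(\Omega)}\le C_2\|A^{1/2}\u\|$ follows from the Poincar\'e inequality on $H^1_0(\Omega)$, which controls $\|\u\|$ by $\|\nabla\u\|$. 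For $\beta=1$ and $\beta=\tfrac32$ the upper bounds $\|A\u\|\le C\|\u\|_{H^2(\Omega)}$ and $\|A^{3/2}\u\|\le C\|\u\|_{H^3(\Omega)}$ are again elementary, coming from $\|(I+\alpha^2A)\|$-type boundedness of $P$ on $\L^2(\Omega)$ together with the continuity of $\Delta\colon H^m(\Omega)\to H^{m-2}(\Omega)$.

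The real work, and the main obstacle, lies in the reverse bounds $\|\u\|_{H^2(\Omega)}\le C\|A\u\|$ and $\|\u\|_{H^3(\Omega)}\le C\|A^{3/2}\u\|$. These are precisely the regularity estimates for the stationary Stokes problem: writing $A\u=\f$ as $-\Delta\u+\nabla p=\f$, $\nabla\cdot\u=0$ in $\Omega$, $\u=\0$ on $\partial\Omega$, the Agmon--Douglis--Nirenberg theory for the Stokes system (Cattabriga) gives $\|\u\|_{H^2(\Omega)}\le C\|\f\|=C\|A\u\|$, and one derivative higher $\|\u\|_{H^3(\Omega)}\le C\|A\u\|_{H^1(\Omega)}$, which combined with the already-established $\beta=\tfrac12$ equivalence applied to $A\u$ gives $\|A\u\|_{H^1(\Omega)}\le C\|A^{1/2}(A\u)\|=C\|A^{3/2}\u\|$. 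The $H^3$ estimate is exactly what forces $\partial\Omega$ to be of class $C^{2,1}$ as in (H1), since the Stokes operator must gain two derivatives up to third order; I would cite these regularity results from the references rather than reprove them, as they are standard but lengthy.
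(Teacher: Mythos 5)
Your proof is correct, but there is nothing in the paper to compare it against: Lemma \ref{le:Poincare} is stated in the preliminaries without proof or even an explicit citation, as a standard fact about the Stokes operator. Your two-part structure is precisely the standard route such references take: the chain \eqref{Poincare} by a term-by-term estimate in the eigenbasis using $\lambda_n\ge\lambda_1$, and the norm equivalences by identifying $\|A^{\frac{1}{2}}\u\|=\|\nabla\u\|$ on $\V$ and invoking Cattabriga/Agmon--Douglis--Nirenberg regularity for the Stokes system for the $H^2$ and $H^3$ lower bounds; you also correctly locate where the $C^{2,1}$ hypothesis (H1) is consumed, namely the $H^3$ estimate. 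One small refinement: for the upper bound $\|A^{\frac{3}{2}}\u\|\le C\|\u\|_{H^3(\Omega)}$, boundedness of $P$ on $\L^2(\Omega)$ together with continuity of $\Delta$ is not quite enough; since $\|A^{\frac{3}{2}}\u\|=\|A^{\frac{1}{2}}(A\u)\|\le C\|A\u\|_{H^1(\Omega)}=C\|P\Delta\u\|_{H^1(\Omega)}$, you need the $H^1(\Omega)$-stability of the Helmholtz--Leray projector, which is exactly Lemma \ref{le:Temam} of the paper, to pass to $C\|\Delta\u\|_{H^1(\Omega)}\le C\|\u\|_{H^3(\Omega)}$. With that adjustment the argument is complete.
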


Let us define $\V_n=span\{\w_1,...,\w_n\}$ as the finite vector space spanned by the first $n\in\mathds{N}$ eigenfunctions associated to the Stokes operator. Thus we consider $P_n: \H\to \V_n$ to be the orthogonal projection operator  with respect to the $\L^2(\Omega)$ inner product and $P_n^{\perp}:=I-P_n$ to be the projection onto $\V_n^{\perp}$, the $\L^2(\Omega)$ orthogonal space to $\V_n$. 
 
The following lemma shows elementary properties for $P_n$ and $P^{\perp}_n$ that will be used frequently. We refer the reader to \cite{Rautmann} for a proof.
\begin{lemma}\label{le:errors} Given $\u\in \H$, it follows that
\begin{equation}\label{stabL2}
\|P_n\u\|\le  \|\u\|.
\end{equation}
Moreover, if $\u\in D(A^{\frac{1}{2}})$, then
\begin{equation}\label{errorL2H1}
\|P_n^{\perp}\u\|^2\le \frac{1}{\lambda_{n+1}}\|A^{\frac{1}{2}}P^\perp_n\u\|^2,
\end{equation}
\begin{equation}\label{stabH1}
\|A^{\frac{1}{2}} P_n\u\|\le  \|A^{\frac{1}{2}}\u\|.
\end{equation}
In addition, if $\u\in D(A)$, then
\begin{equation}\label{errorH1H2yL2H2}
\|A^{\frac{1}{2}} P_n^{\perp}\u\|^2\le \frac{1}{\lambda_{n+1}}\|A P^\perp_n\u\|^2\quad\mbox{ and }\quad \|P_n^{\perp}\u\|^2\le \frac{1}{\lambda_{n+1}^2}\|AP^\perp_n\u\|^2,
\end{equation}
\begin{equation}\label{stabH2}
\|A P_n\u\|\le  \|A\u\|.
\end{equation}
\end{lemma}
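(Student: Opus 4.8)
The plan is to reduce every inequality to a statement about the Fourier coefficients of $\u$ in the orthonormal basis of eigenfunctions $\{\w_k\}_{k=1}^\infty$, after which each bound becomes an elementary comparison of non-negative series. Writing $u_k=(\u,\w_k)$, the normalization implicit in the formula $\|A^\beta\u\|^2=\sum_k\lambda_k^{2\beta}|(\u,\w_k)|^2$ means that $\{\w_k\}$ is orthonormal, so that $\u=\sum_{k\ge 1}u_k\w_k$ with $P_n\u=\sum_{k=1}^n u_k\w_k$ and $P_n^\perp\u=\sum_{k> n}u_k\w_k$. By Parseval's identity one has $\|\u\|^2=\sum_{k\ge 1}|u_k|^2$, and, whenever $\u$ belongs to $D(A^{1/2})$ or $D(A)$ respectively, $\|A^{1/2}\u\|^2=\sum_{k\ge1}\lambda_k|u_k|^2$ and $\|A\u\|^2=\sum_{k\ge1}\lambda_k^2|u_k|^2$; the same formulas hold verbatim for $P_n\u$ and $P_n^\perp\u$ with the sums restricted to $k\le n$ and to $k>n$, since these projections commute with $A^\beta$ on the relevant domains.

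For the three stability estimates (\ref{stabL2}), (\ref{stabH1}) and (\ref{stabH2}), I would simply discard the tail: for instance $\|A^{1/2}P_n\u\|^2=\sum_{k=1}^n\lambda_k|u_k|^2\le\sum_{k\ge1}\lambda_k|u_k|^2=\|A^{1/2}\u\|^2$, since every omitted term $\lambda_k|u_k|^2$ is non-negative, and identically for the $\L^2$ and $D(A)$ versions. These require no hypothesis beyond $\u$ lying in the relevant domain.

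For the error estimates (\ref{errorL2H1}) and (\ref{errorH1H2yL2H2}), I would use the monotonicity $\lambda_{n+1}\le\lambda_k$ for all $k\ge n+1$. Since $P_n^\perp\u$ only involves indices $k\ge n+1$, one factors out the smallest available eigenvalue: for (\ref{errorL2H1}), $\|A^{1/2}P_n^\perp\u\|^2=\sum_{k>n}\lambda_k|u_k|^2\ge\lambda_{n+1}\sum_{k>n}|u_k|^2=\lambda_{n+1}\|P_n^\perp\u\|^2$, and dividing by $\lambda_{n+1}$ gives the claim. The first inequality in (\ref{errorH1H2yL2H2}) is the same computation with the pair $(|u_k|^2,\lambda_k|u_k|^2)$ replaced by $(\lambda_k|u_k|^2,\lambda_k^2|u_k|^2)$, and the second follows from $\|AP_n^\perp\u\|^2=\sum_{k>n}\lambda_k^2|u_k|^2\ge\lambda_{n+1}^2\sum_{k>n}|u_k|^2=\lambda_{n+1}^2\|P_n^\perp\u\|^2$; equivalently, it is obtained by chaining the first inequality of (\ref{errorH1H2yL2H2}) with (\ref{errorL2H1}) applied to $A^{1/2}\u$.

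There is essentially no hard step here: the entire lemma is the spectral theorem for $A$ together with the ordering of its eigenvalues, and the only point requiring a word of care is the normalization of the $\w_k$. If one does not normalize, the coefficients must be taken as $u_k=(\u,\w_k)/\|\w_k\|^2$ and Parseval carries the factors $\|\w_k\|^2$; but since the stated norm formulas already presuppose orthonormality, I would fix $\|\w_k\|=1$ from the outset and avoid this bookkeeping.
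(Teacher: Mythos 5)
Your proof is correct. The paper itself offers no proof of this lemma---it simply refers the reader to the Rautmann reference---and your argument (expand in the orthonormal eigenbasis of $A$, use Parseval, discard non-negative tail terms for the three stability bounds, and factor out the smallest eigenvalue $\lambda_{n+1}$ over the indices $k>n$ for the two error bounds) is exactly the standard spectral proof that such a reference contains; your closing remark about the normalization of the $\w_k$, which the paper's phrase ``orthogonal basis'' leaves ambiguous but whose norm formulas $\|A^\beta\u\|^2=\sum_k\lambda_k^{2\beta}|(\u,\w_k)|^2$ implicitly presuppose, is a worthwhile clarification rather than a gap.
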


Let $\u, \v\in \boldsymbol{\vartheta}$. Then we define $B(\u, \v)$ as
$$ B(\u,\v)=P((\u\cdot\nabla)\v)$$
and 
$\widetilde B(\u, \v)$ as 
$$ \widetilde B(\u,\v)=-P(\u\times(\nabla\times\v)).$$ 
Using the fact that 
$$(\u\cdot	\nabla)\v+(\nabla\u)^T\v=-\u\times(\nabla\times\v)+\nabla(\u\cdot\v)$$
and applying the Helmholtz-Leray operator, we get the relation
\begin{equation}\label{rel:tB-B}
B(\u, \v)+B^\star (\u, \v)=\widetilde B(\u, \v),
\end{equation}
where we have denoted $B^\star (\u, \v)=P((\nabla \u)^T\v)$. Moreover,
we have the relation
\begin{equation}\label{rel:B_starB}
( B^\star(\u, \v), \w)=(B(\w, \v), \u)
\end{equation}
for all $\u, \v, \w\in \V$.

Next we review some needed inequalities and continuity properties of the operators $B$ and $\widetilde B$.  
\begin{lemma}\label{le:opB}
The bilinear operator $B$ is continued as follows. There exists a constant $C>0$ scale invariant such that
\begin{enumerate}[(i)]
\item For all $ \u\in D(A^{\frac{1}{2}})$, $\v\in D(A^{\frac{1}{2}})$ and $\w\in D(A^{\frac{1}{2}})$,
\begin{equation}\label{BL4L2L4}
\langle  B(\u, \v), \w\rangle_{D(A^{-\frac{1}{2}}), D(A^{\frac{1}{2}})}\le C  \|\u\|^{\frac{1}{2}} \|A^{\frac{1}{2}} \u\|^{\frac{1}{2}} \|A^{\frac{1}{2}} \v\| \|A^{\frac{1}{2}}\w\|^{\frac{1}{2}} \|\w\|^{\frac{1}{2}}.
\end{equation}
\item For all $ \u\in D(A^{\frac{1}{2}})$, $\v\in D(A)$ and $\w\in\H$,
\begin{equation}\label{BL4L4L2}
(B(\u, \v), \w)\le C  \|\u\|^{\frac{1}{2}} \|A^{\frac{1}{2}} \u\|^{\frac{1}{2}} \|A^{\frac{1}{2}} \v\|^{\frac{1}{2}} \|A\v\|^{\frac{1}{2}} \|\w\|.
\end{equation}
\item For all $\u\in \H$, $\v\in D(A^{\frac{1}{2}})$, and $\w\in D(A)$,
\begin{equation}\label{BL2L2Linf}
\langle B(\u, \v), \w\rangle_{D(A^{-1}), D(A)}\le C \|\u\| \|A^{\frac{1}{2}}\v\|  \|\w\|^{\frac{1}{2}} \|A\w\|^{\frac{1}{2}}.
\end{equation}
\item For all $\u\in D(A)$, $\v \in D(A^{\frac{1}{2}}) $, and $\w\in \H$,
\begin{equation}\label{BLinfL2L2}
(B(\u, \v), \w)\le C \|\u\|^{\frac{1}{2}}\|A\u\|^{\frac{1}{2}}\|A^{\frac{1}{2}}\v\|\|\w\|.
\end{equation}
\item For all $\u\in \H$, $\v\in D(A^{\frac{1}{2}})$, and $\w\in D(A^{\frac{1}{2}})$, 
\begin{equation}\label{Skew_Symmetric_B}
\langle B(\u, \v), \w\rangle_{D(A^{-\frac{1}{2}}), D(A^{\frac{1}{2}})}=-\langle B(\u,\w), \v\rangle_{D(A^{-\frac{1}{2}}), D(A^{\frac{1}{2}})}.
\end{equation}
In particular, 
\begin{equation}\label{Skew_Symmetric_B-bis}
\langle B(\u, \v), \v\rangle_{D(A^{-\frac{1}{2}}), D(A^{\frac{1}{2}})}=0.
\end{equation}
\end{enumerate}
\end{lemma}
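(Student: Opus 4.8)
The plan is to separate the algebraic identity (v) from the four continuity bounds (i)--(iv), which I would all derive by a single recipe. For (v), I would first work on the dense set $\boldsymbol{\vartheta}$, where the duality pairing collapses to the trilinear integral $\int_\Omega ((\u\cdot\nabla)\v)\cdot\w\,{\rm d}\x$: since $P$ is self-adjoint and $\v,\w$ are divergence-free, one has $P\w=\w$ and $(P((\u\cdot\nabla)\v),\w)=((\u\cdot\nabla)\v,\w)$. Integrating by parts and using $\nabla\cdot\u=0$ together with the vanishing boundary contribution (which is automatic on $\boldsymbol{\vartheta}$, and in general follows from $\u\cdot\n=0$) transfers the gradient from $\v$ onto $\w$ with a change of sign and no boundary term, giving $\langle B(\u,\v),\w\rangle=-\langle B(\u,\w),\v\rangle$; setting $\w=\v$ yields \eqref{Skew_Symmetric_B-bis}. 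The continuity bounds proved below then allow me to pass from $\boldsymbol{\vartheta}$ to the full spaces $\H$, $D(A^{1/2})$, $D(A)$ by density and continuity of the trilinear form.

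For (i)--(iv) the common scheme is: write each pairing as $\int_\Omega ((\u\cdot\nabla)\v)\cdot\w\,{\rm d}\x$, apply H\"older's inequality with an exponent triple adapted to the regularity of each factor, and convert the resulting Lebesgue norms into the norms $\|A^{\beta}\cdot\|$ by means of two classical two-dimensional interpolation inequalities---Ladyzhenskaya's $\|f\|_{L^4(\Omega)}\le C\|f\|^{\frac12}\|\nabla f\|^{\frac12}$ and Agmon's $\|f\|_{L^\infty(\Omega)}\le C\|f\|^{\frac12}\|f\|_{H^2(\Omega)}^{\frac12}$---combined with the norm equivalences $\|\nabla\,\cdot\|\simeq\|A^{\frac12}\cdot\|$ and $\|\cdot\|_{H^2(\Omega)}\simeq\|A\,\cdot\|$ furnished by Lemma~\ref{le:Poincare}. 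The scale invariance of $C$ is inherited from the scale-invariant form of these two inequalities.

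Concretely, I would use H\"older with exponents $(4,2,4)$ for (i), placing $\u$ and $\w$ in $L^4$ via Ladyzhenskaya and $\nabla\v$ in $L^2$; with $(4,4,2)$ for (ii), applying Ladyzhenskaya to $\u$ and, crucially, to $\nabla\v$ (legitimate since $\v\in D(A)=\H^2(\Omega)\cap\V$, giving $\|\nabla\v\|_{L^4}\le C\|A^{\frac12}\v\|^{\frac12}\|A\v\|^{\frac12}$) while leaving $\w$ in $L^2$; with $(2,2,\infty)$ for (iii), keeping $\u$ in $L^2$, $\nabla\v$ in $L^2$, and estimating $\|\w\|_{L^\infty}$ by Agmon since $\w\in D(A)$; and with $(\infty,2,2)$ for (iv), estimating $\|\u\|_{L^\infty}$ by Agmon since $\u\in D(A)$ and leaving $\nabla\v$ and $\w$ in $L^2$. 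In each case the powers of the resulting norms reproduce the claimed right-hand sides exactly.

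No single step is genuinely hard; the substance is bookkeeping. The care required is twofold: to verify that the integral representation of the pairing is licit for the regularity classes in each item---which it is, after the density argument, precisely because the right-hand norms dominate the trilinear form---and to distribute the two interpolation ``budgets'' (one $L^4$- or $L^\infty$-splitting on each of the less regular factors) through the chosen H\"older triple so that the stated exponents emerge. The most delicate point is simply confirming that the duality pairings appearing in (i), (iii), and (v) are the continuous extensions of the smooth trilinear functional, so that the estimates and the skew-symmetry identity hold on the full function spaces.
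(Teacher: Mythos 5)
Your proposal is correct and takes essentially the same approach as the paper: the paper gives no detailed proof, only the remark following Lemma \ref{le:optB} stating that Gagliardo--Nirenberg (whose two-dimensional $L^4$ case is Ladyzhenskaya's inequality) and Agmon's inequality yield the estimates, with the scale invariance of $C$ inherited from those inequalities---exactly the toolkit you deploy. Your H\"older-exponent bookkeeping for (i)--(iv) and the density/integration-by-parts argument for (v) are the standard fleshing-out of that sketch.
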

\begin{lemma}\label{le:optB}
The bilinear operator $\widetilde B$ is continued as follows. There exists a constant $C>0$ scale invariant such that 
\begin{enumerate}[(i)]
\item For all $\u\in D(A^{\frac{1}{2}})$, $\v\in D(A)$, and  $\w\in\H$,
\begin{equation}\label{tBL4L4L2}
(\widetilde B(\u, \v), \w)\le C \|\u\|^{\frac{1}{2}} \|A^{\frac{1}{2}}\u\|^{\frac{1}{2}}  \|A^{\frac{1}{2}}\v\|^{\frac{1}{2}} \|A\v\|^{\frac{1}{2}} \|\w\|.
\end{equation}
\item For all $ \u\in D(A^{\frac{1}{2}})$, $\v\in D(A^{\frac{1}{2}})$ and $\w\in D(A^{\frac{1}{2}})$,
\begin{equation}\label{tBL4L2L4}
\langle \widetilde B(\u, \v), \w\rangle_{D(A^{-\frac{1}{2}}), D(A^{\frac{1}{2}})}\le C  \|\u\|^{\frac{1}{2}} \|A^{\frac{1}{2}} \u\|^{\frac{1}{2}} \|A^{\frac{1}{2}} \v\| \|A^{\frac{1}{2}}\w\|^{\frac{1}{2}} \|\w\|^{\frac{1}{2}}.
\end{equation}
\item For all $\u\in\H$ and $\v\in D(A^{\frac{1}{2}})$, 
\begin{equation}\label{Skew_Symmetric_tildeB}
 (\widetilde B(\u, \v), \u)=0.
\end{equation}
\end{enumerate}
\end{lemma}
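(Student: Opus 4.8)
The plan is to estimate $\widetilde B$ directly from its definition $\widetilde B(\u,\v)=-P(\u\times(\nabla\times\v))$, rather than through the splitting $\widetilde B=B+B^\star$ of (\ref{rel:tB-B}). The reason is that the bounds claimed in $(i)$ and $(ii)$ are sharp in the factor $\|\u\|^{\frac12}\|A^{\frac12}\u\|^{\frac12}$, which reflects that $\u$ enters the cross product \emph{undifferentiated} and may therefore be placed in $\L^4(\Omega)$; splitting off $B^\star(\u,\v)=P((\nabla\u)^T\v)$ would differentiate $\u$ and produce the strictly larger factor $\|A^{\frac12}\u\|$. Since $P$ is the orthogonal projection onto $\H$, it is self-adjoint with $\|P\cdot\|\le\|\cdot\|$, so for $\w\in\H$ one has $(\widetilde B(\u,\v),\w)=-(\u\times(\nabla\times\v),\w)$, and the same identity holds for the duality pairing when $\w\in D(A^{\frac12})$. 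The two analytic ingredients are the two-dimensional Ladyzhenskaya inequality $\|\f\|_{\L^4(\Omega)}\le C\|\f\|^{\frac12}\|\f\|_{\H^1(\Omega)}^{\frac12}$ (which holds for scalar fields as well) and the norm equivalences of Lemma \ref{le:Poincare}, through which $\|\nabla\times\v\|\le C\|A^{\frac12}\v\|$ and $\|\nabla(\nabla\times\v)\|\le C\|A\v\|$.

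For $(i)$, with $\u\in D(A^{\frac12})$, $\v\in D(A)$ and $\w\in\H$, I would apply H\"older's inequality in the form $|(\u\times(\nabla\times\v),\w)|\le\|\u\|_{\L^4(\Omega)}\,\|\nabla\times\v\|_{L^4(\Omega)}\,\|\w\|$. Ladyzhenskaya's inequality then bounds $\|\u\|_{\L^4(\Omega)}\le C\|\u\|^{\frac12}\|A^{\frac12}\u\|^{\frac12}$ and, applied to the scalar $\nabla\times\v$, gives $\|\nabla\times\v\|_{L^4(\Omega)}\le C\|\nabla\times\v\|^{\frac12}\|\nabla(\nabla\times\v)\|^{\frac12}\le C\|A^{\frac12}\v\|^{\frac12}\|A\v\|^{\frac12}$. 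Multiplying these estimates yields exactly (\ref{tBL4L4L2}).

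For $(ii)$, with $\u,\v,\w\in D(A^{\frac12})$, I would split the product the other way, $|(\u\times(\nabla\times\v),\w)|\le\|\u\|_{\L^4(\Omega)}\,\|\nabla\times\v\|\,\|\w\|_{\L^4(\Omega)}$, estimate the two $\L^4$ factors by Ladyzhenskaya as above, and bound $\|\nabla\times\v\|\le C\|A^{\frac12}\v\|$ by Lemma \ref{le:Poincare}; this produces (\ref{tBL4L2L4}). Here, as an alternative, one may instead use $\widetilde B=B+B^\star$: since $\u,\v,\w\in\V$, relation (\ref{rel:B_starB}) gives $(B^\star(\u,\v),\w)=(B(\w,\v),\u)$, and both this term and $\langle B(\u,\v),\w\rangle$ are controlled by the symmetric estimate (\ref{BL4L2L4}). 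Finally, $(iii)$ is purely algebraic: for every vector field $\boldsymbol a$ one has $(\u\times\boldsymbol a)\cdot\u=0$ pointwise, so with $\boldsymbol a=\nabla\times\v$ the integrand of $(\u\times(\nabla\times\v),\u)$ vanishes identically and hence $(\widetilde B(\u,\v),\u)=0$. For $\u\in\boldsymbol{\vartheta}$ this is immediate, and the extension to $\u\in\H$ follows by density.

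I expect the main difficulty to lie not in the computation but in the choice of route. One must resist estimating $\widetilde B$ through $B+B^\star$ in $(i)$, because there $\w\in\H$ blocks the use of (\ref{rel:B_starB}) (which requires $\w\in\V$), and a direct bound on $B^\star$ returns only the non-sharp factor $\|A^{\frac12}\u\|$, which by Poincar\'e dominates the desired $\|\u\|^{\frac12}\|A^{\frac12}\u\|^{\frac12}$ and so cannot be used to close the estimate. Working from the cross-product form circumvents this and simultaneously renders $(iii)$ transparent. The only other point requiring a little care is the density argument in $(iii)$, since for merely $\u\in\H$ the product $\u\times(\nabla\times\v)$ lies a priori only in $\L^1(\Omega)$.
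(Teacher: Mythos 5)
Your proof is correct and follows essentially the approach the paper intends: the paper states this lemma without a detailed proof, remarking only that Gagliardo--Nirenberg's and Agmon's inequalities are used in two dimensions, and your argument---H\"older's inequality applied directly to the cross-product form $-P(\u\times(\nabla\times\v))$, Ladyzhenskaya's inequality (the 2D case of Gagliardo--Nirenberg) on the $\L^4$ factors together with the norm equivalences of Lemma \ref{le:Poincare}, and the pointwise identity $(\u\times\boldsymbol{a})\cdot\u=0$ for part (iii)---is precisely that remark carried out in detail. Your further observation that (i) should not be routed through the splitting $\widetilde B=B+B^\star$ of (\ref{rel:tB-B}), since estimating $B^\star(\u,\v)=P((\nabla\u)^T\v)$ differentiates $\u$ and destroys the factor $\|\u\|^{\frac12}\|A^{\frac12}\u\|^{\frac12}$, is sound and consistent with how the paper itself only invokes that splitting where all three arguments lie in $\V$.
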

\begin{remark} Gagliardo-Nirenberg's inequality and Agmon's inequality are used to prove the inequalities of Lemmas \ref{le:opB} and \ref{le:optB}. In two-dimensional domains, these inequalities are scaling invariant; therefore, the inequalities of Lemma \ref{le:opB} and \ref{le:optB} inherit the invariance property.   

\end{remark}
\section{Statement of the results}
Here we stay as a reference the hypotheses for $\u_0$ and $\f$ to be used throughout this work.
\begin{enumerate}
\item [(H2)] Assume $\u_0\in D(A)$ and $\f\in L^\infty (0,T; \L^2(\Omega))$ for either $0<T<\infty$ or $T=\infty$. 
\end{enumerate}
Our first step is to modify (\ref{NS-alpha})-(\ref{Helmholtz}) together with (\ref{boundary-condition}) and (\ref{initial-condition}) in order to easily produce an equivalent problem  without pressure. First we  apply the Helmholtz-Leray projector $P$ to (\ref{NS-alpha}) and (\ref{Helmholtz}). Then we obtain the following functional evolution setting 
\begin{equation}\label{NS-alpha-III}
\left\{
\begin{array}{rcl}
\displaystyle
\frac{d\v}{dt}+\nu A\v+ \widetilde B(\u, \v )=P\f,
\\
\u(0)=\u_0,
\end{array}
\right.
\end{equation}
where we have defined $\v=(I+\alpha^2 A)\u$. 
\begin{remark}
Observe that we have switched the role of $\u$ and $\v$ in (\ref{NS-alpha})-(\ref{Helmholtz}) together with (\ref{boundary-condition}) and (\ref{initial-condition}). This might seem a bit strange at first sight. The reasons for this are that we want to keep the notation of the previous papers and to have a unified hypothesis on $\u_0$ being the initial data for (\ref{NS-alpha-III}) and (\ref{NS}) below.          
\end{remark}
Analogously, we apply the Helmholtz-Leray projector to (\ref{NS-PDE}) together with (\ref{boundary-condition}) and (\ref{initial-condition}) to have
\begin{equation}\label{NS}
\left\{
\begin{array}{rcl}
\displaystyle
\frac{d\u}{dt}+\nu A\u+ B(\u, \u )=P\f,
\\
\u(0)=\u_0.
\end{array}
\right.
\end{equation}

Next, we begin by defining the Galerkin approximation to (\ref{NS-alpha-III}) for which we can easily prove existence of solutions and for which we can also show a priori energy estimates that are independent of the regularization parameter $\alpha$. In order to do this, we use the basis of the eigenfunctions $\w_i$, $j\in\mathds{ N}$, for the Stokes operator $A$. For every $n\in\mathds{ N}$, we define the $n${th} Galerkin approximation 
$$\u_n^\alpha=\sum_{i=1}^n a_i^n(t) \w_i $$ satisfying 
\begin{equation}\label{Galerkin}
\left\{
\begin{array}{rcl}
\displaystyle
\frac{d\v_n^\alpha}{dt}+\nu A\v^\alpha_n+P_n \widetilde B(\u_n^\alpha,\v_n^\alpha )&=&P_n\f,
\\
\u_n^\alpha(0)&=&P_n\u_0,
\end{array}
\right.
\end{equation}
where we have defined $\v_n^\alpha= (I+\alpha^2 A)\u_n^\alpha$.

The existence of a solution $\u_n^\alpha$ to (\ref{Galerkin}) on an interval $[0, T_{\alpha, n})$ follows from Carathéodory's theorem. Then a priori estimates show that the solution exists according to the case $t\in[0,T]$ or $[0,+\infty)$. The uniqueness of the solution to (\ref{Galerkin}) is standard; namely, it follows by comparing to different solutions. The smoothness of the solution depends on how smooth is $\f$; in particular, one can prove that $\u_n^\alpha\in H^1(0,T; \V_n)$ under $(\rm H2)$.

Initially, we will derive local-in-time error estimates appropriate on $[0,T]$. Later, we will show how these can be combined to provide  error estimates  that are globally defined on $[0,+\infty)$. 
\begin{theorem}\label{Th1}
Let $T>0$ be fixed. Assume that $(H1)$ and $(H2)$ hold.  Let $\u$ be the solution to (\ref{NS}), and let $\u_n^\alpha$ be the solution to $(\ref{Galerkin})$ on $[0,T]$. 
Then there exists  $K>0$ such that
$$\sup_{0\le t\le T}\left[\|\u_n^\alpha(t)-\u(t)\|^2+\int_0^t \|A^{\frac{1}{2}}(\u_n^\alpha(s)-\u(s))\|^2{\rm d} s\right]\le K\,(\lambda_{1}^{-\frac{1}{2}}\alpha^2+\lambda_{n+1}^{-\frac{3}{2}}),$$
where $K=K(\u_0,\f, \nu, T, \Omega)$, and $\lambda_{n+1}$ is the $(n+1)${th} eigenvalue of the Stokes operator $A$.
\end{theorem}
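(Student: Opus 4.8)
The plan is to work with the \emph{projected} error $\theta=\u_n^\alpha-P_n\u\in\V_n$, so that the genuine error splits as $\u_n^\alpha-\u=\theta-P_n^\perp\u$; note that $\theta(0)=\0$ since both $\u_n^\alpha(0)$ and $P_n\u(0)$ equal $P_n\u_0$. To obtain an equation for $\theta$ that can be tested inside $\V_n$, I would first recast \eqref{NS} in the ``filtered'' form compatible with \eqref{Galerkin}: adding and subtracting $\alpha^2\tfrac{d}{dt}A\u$ and $\nu\alpha^2A^2\u$, and using that $B(\u,\u)=\widetilde B(\u,\u)$ (because $B^\star(\u,\u)=P(\tfrac12\nabla|\u|^2)=\0$ by \eqref{rel:tB-B}), the Navier--Stokes equation becomes
\[
\frac{d}{dt}(I+\alpha^2A)\u+\nu A(I+\alpha^2A)\u+\widetilde B(\u,\u)=P\f+\alpha^2\tfrac{d}{dt}A\u+\nu\alpha^2A^2\u.
\]
Applying $P_n$ (which commutes with $A$ and $\tfrac{d}{dt}$) and subtracting from \eqref{Galerkin}, and using $\v_n^\alpha-\u_n^\alpha=\alpha^2A\u_n^\alpha$, yields
\[
\frac{d}{dt}(I+\alpha^2A)\theta+\nu A(I+\alpha^2A)\theta+P_n\big[\widetilde B(\u_n^\alpha,\v_n^\alpha)-\widetilde B(\u,\u)\big]=-\alpha^2\tfrac{d}{dt}AP_n\u-\nu\alpha^2A^2P_n\u.
\]

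Next I would test this with $\theta$. Since $\theta\in\V_n$ the projection $P_n$ in front of the nonlinear term is harmless, and self-adjointness of $A$ produces the clean energy structure
\[
\tfrac12\tfrac{d}{dt}\big(\|\theta\|^2+\alpha^2\|A^{\frac12}\theta\|^2\big)+\nu\|A^{\frac12}\theta\|^2+\nu\alpha^2\|A\theta\|^2=\text{(nonlinear)}+\text{(residual)},
\]
which crucially offers \emph{two} dissipation terms to absorb into. I would expand the nonlinear difference by bilinearity as $\alpha^2\widetilde B(\u_n^\alpha,A\u_n^\alpha)+\widetilde B(\e,\u_n^\alpha)+\widetilde B(\u,\e)$, substitute $\e=\theta-P_n^\perp\u$, and exploit the rotational-form skew-symmetry \eqref{Skew_Symmetric_tildeB}: the would-be dangerous term $(\widetilde B(\theta,\u_n^\alpha),\theta)$ vanishes identically. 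This leaves the Gronwall term $(\widetilde B(\u,\theta),\theta)$, the two projection-error terms $(\widetilde B(P_n^\perp\u,\u_n^\alpha),\theta)$ and $(\widetilde B(\u,P_n^\perp\u),\theta)$, and the $\alpha$-term $\alpha^2(\widetilde B(\u_n^\alpha,A\u_n^\alpha),\theta)$.

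I would then bound these. The term $(\widetilde B(\u,\theta),\theta)$ is handled by \eqref{tBL4L2L4}, and after Young's inequality it splits into $\epsilon\|A^{\frac12}\theta\|^2$ plus $C\|\u\|^2\|A^{\frac12}\u\|^2\|\theta\|^2$, the latter feeding the Gronwall coefficient (integrable by the $L^\infty(0,T;D(A))$ bound on $\u$ from Section 5.1, which is what makes $K$ grow exponentially in $T$). The two projection-error terms are estimated via \eqref{tBL4L4L2}--\eqref{tBL4L2L4} combined with the spectral estimates of Lemma \ref{le:errors}, converting each factor $P_n^\perp\u$ into negative powers of $\lambda_{n+1}$ at the cost of higher norms of $\u$; the residual terms $\nu\alpha^2(AP_n\u,A\theta)$ and $\alpha^2(A\tfrac{d}{dt}P_n\u,\theta)$ are absorbed into $\nu\alpha^2\|A\theta\|^2$ and $\nu\|A^{\frac12}\theta\|^2$ by Young, leaving a source of order $\alpha^2$ (the factor $\lambda_1^{-\frac12}$ emerging from Poincaré \eqref{Poincare} in this allocation). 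After Gronwall and $\theta(0)=\0$ this gives $\sup_t(\|\theta\|^2+\alpha^2\|A^{\frac12}\theta\|^2)+\nu\int_0^t\|A^{\frac12}\theta\|^2\le K(\lambda_1^{-\frac12}\alpha^2+\lambda_{n+1}^{-\frac32})$, and the conclusion follows from $\|\e\|^2\le2\|\theta\|^2+2\|P_n^\perp\u\|^2$ and $\|A^{\frac12}\e\|^2\le2\|A^{\frac12}\theta\|^2+2\|A^{\frac12}P_n^\perp\u\|^2$, bounding the $P_n^\perp\u$ contributions by Lemma \ref{le:errors} and the regularity of $\u$ available from Section 5.1.

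The step I expect to be the main obstacle is the $\alpha$-term $\alpha^2(\widetilde B(\u_n^\alpha,A\u_n^\alpha),\theta)$: at face value it seems to demand control of $\|A^{\frac32}\u_n^\alpha\|$, i.e. the extra regularity coming from the hyperviscosity in \eqref{Galerkin}, which the paper deliberately refuses to use. I would instead write $\widetilde B=B+B^\star$ and move the derivative off $A\u_n^\alpha$ using \eqref{rel:tB-B}, \eqref{rel:B_starB} and the skew-symmetry \eqref{Skew_Symmetric_B}, recasting the term so that it only pairs $\alpha^2A\u_n^\alpha$ (controlled by the $\alpha\|A\u_n^\alpha\|$ a priori bound of Section 5.1, which supplies a spare factor of $\alpha$) against quantities built from $\u_n^\alpha$ and $A^{\frac12}\theta$, the latter being absorbed into the dissipation. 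Making this redistribution close while simultaneously tracking the exact exponents, so as to land precisely on $\alpha^2$ and $\lambda_{n+1}^{-3/2}$ rather than on cruder powers, is the delicate heart of the argument.
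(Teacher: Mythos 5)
Your proposal is architecturally parallel to the paper's proof but takes a genuinely different route to the error equation: you \emph{filter} (\ref{NS}) by adding $\alpha^2\tfrac{d}{dt}A\u+\nu\alpha^2A^2\u$ as a consistency error, whereas the paper \emph{de-filters} the Galerkin equation (\ref{Galerkin}) by applying the resolvent $(I+\alpha^2A)^{-1}$, so that all $\alpha$-dependence enters through operators of norm at most one; your $\theta$ is exactly the paper's $\z_n^\alpha$. Also, the step you single out as the main obstacle is not where the difficulty lies: your plan for $\alpha^2(\widetilde B(\u_n^\alpha,A\u_n^\alpha),\theta)$ --- split via (\ref{rel:tB-B}), move the derivative off $A\u_n^\alpha$ with (\ref{rel:B_starB}) and (\ref{Skew_Symmetric_B}), and spend the spare factor $\alpha\|A\u_n^\alpha\|\le E_2^{1/2}$ supplied by (\ref{second-energy}) --- is precisely how the paper disposes of its terms $J_8$, $J_9$, $J_{10}$, and it closes with the inequalities of Lemma \ref{le:opB} alone, never invoking $\|A^{\frac{3}{2}}\u_n^\alpha\|$.

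The genuine gap is in your final assembly, in the treatment of the pure projection error $P_n^\perp\u$. You propose to bound $\sup_t\|P_n^\perp\u(t)\|^2$ by Lemma \ref{le:errors} together with ``the regularity of $\u$ available from Section 5.1.'' But Section 5.1 only provides $\u\in L^\infty(0,T;D(A^{\frac{1}{2}}))\cap L^2(0,T;D(A))$; the $L^\infty(0,T;D(A))$ bound you invoke is not established and is not available under (H2) alone (this is exactly the extra hypothesis $\tfrac{d}{dt}\f\in L^\infty(0,T;L^2(\Omega))$ discussed in Section \ref{Remarks}, which would upgrade the rate to $\lambda_{n+1}^{-2}$). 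That misstatement is harmless for your Gronwall coefficient, which needs only $\|\u\|\,\|A^{\frac{1}{2}}\u\|$, but it is fatal where it matters: pointwise in time the best spectral bound you can form is $\|P_n^\perp\u(t)\|^2\le\lambda_{n+1}^{-1}\|A^{\frac{1}{2}}\u(t)\|^2\le\lambda_{n+1}^{-1}\widetilde E_2$, since $\|A\u(t)\|$ is only square-integrable in time. Your argument therefore delivers $K(\lambda_1^{-\frac{1}{2}}\alpha^2+\lambda_{n+1}^{-1})$, not the claimed $\lambda_{n+1}^{-\frac{3}{2}}$. The missing idea --- the content of the paper's separate lemma yielding (\ref{error-e}) --- is that $\e_n=P_n^\perp\u$ is not a mere projection error: it solves its own parabolic equation (\ref{lm13-lab1}), obtained by applying $P_n^\perp$ to (\ref{NS}). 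Testing that equation with $\e_n$ and using $\|A^{\frac{1}{2}}\e_n\|^2\ge\lambda_{n+1}\|\e_n\|^2$ produces the decay factor $e^{-\nu\lambda_{n+1}t}$, whose time integration buys an extra $(\nu\lambda_{n+1})^{-1}$ against a source of size $\lambda_{n+1}^{-\frac{1}{2}}$, and $\u_0\in D(A)$ gives $\|\e_n(0)\|^2\le\lambda_{n+1}^{-2}\|A\u_0\|^2$; together these yield $\sup_t\|\e_n(t)\|^2\le K_1\lambda_{n+1}^{-\frac{3}{2}}$. Two smaller cautions of the same flavor: (i) in your nonlinear projection terms, $P_n^\perp\u$ must be routed via (\ref{Skew_Symmetric_B}) into a slot carrying the product $\|\cdot\|^{\frac{1}{2}}\|A^{\frac{1}{2}}\cdot\|^{\frac{1}{2}}$, so that (\ref{errorL2H1}) and (\ref{errorH1H2yL2H2}) give $\lambda_{n+1}^{-\frac{3}{4}}$ per appearance; placing it in the middle slot of (\ref{tBL4L2L4}) gives only $\lambda_{n+1}^{-\frac{1}{2}}$ and again degrades the rate after Young; (ii) the residual $\alpha^2(A\tfrac{d}{dt}P_n\u,\theta)$ cannot be absorbed into $\nu\|A^{\frac{1}{2}}\theta\|^2$, since that would require $\|A^{\frac{1}{2}}\u_t\|$; write it as $\alpha^2(\tfrac{d}{dt}P_n\u,A\theta)$, absorb into $\nu\alpha^2\|A\theta\|^2$, and control the remainder $\tfrac{C}{\nu}\alpha^2\|\u_t\|^2$ by the limit version of (\ref{est:derivative-u}).
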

\begin{theorem}\label{Th2} Let $T=\infty$. Assume that $(H1)$ and $(H2)$ hold. Let $\u$ be the solution to (\ref{NS}), and let $\u_n^\alpha$ be the solution to $(\ref{Galerkin})$ on $[0,+\infty)$. Then there exist  $K_\infty>0$,  $n_0\in\mathds{N}$, and $\alpha_0>0$  such that 
$$\sup_{0\le t <\infty}\|\u_n^\alpha(t)-\u(t)\|^2\le K_\infty\,(\lambda_{1}^{-\frac{1}{2}}\alpha^2+\lambda_{n+1}^{-\frac{3}{2}})$$	
holds provided $n \ge n_0$ and $\alpha\le\alpha_0$,
where $K_\infty=K_\infty(\u_0,\f, \nu, \Omega)$, and $\lambda_{n+1}$ is the $(n+1)$th eigenvalue of the Stokes operator $A$.
\end{theorem}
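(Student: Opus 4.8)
The plan is to upgrade the local-in-time bound of Theorem~\ref{Th1} to a uniform-in-time bound by invoking the $L^2$-stability of the exact solution $\u$, following the strategy pioneered by Heywood and Salvi. The obstruction to using Theorem~\ref{Th1} directly is that its constant $K$ grows exponentially in $T$ and is therefore useless as $T\to\infty$; stability is precisely the extra structural property of $\u$ that compensates for this growth, since it forces perturbations of $\u$ to decay rather than to amplify.

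First I would record the global-in-time a priori bounds for the Galerkin approximation (Section 6.1): testing (\ref{Galerkin}) against $\v_n^\alpha$, using the skew-symmetry (\ref{Skew_Symmetric_tildeB}) and Lemma~\ref{le:Poincare}, yields bounds for $\|\u_n^\alpha(t)\|$, $\alpha\|A\u_n^\alpha(t)\|$ and $\int_0^\infty\|A^{\frac{1}{2}}\u_n^\alpha\|^2\,{\rm d}s$ that are independent of $n$, $\alpha$ and $t$, because $\f\in L^\infty(0,\infty;\L^2(\Omega))$. The analogous bounds for $\u$ follow by passing to the limit. These uniform bounds are what make the residual of the error equation controllable for all time rather than only on a finite interval.

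Next, setting $\e=\u_n^\alpha-\u$, I would subtract (\ref{NS}) from (\ref{Galerkin}) to derive the error equation. Writing $\v_n^\alpha=\u_n^\alpha+\alpha^2A\u_n^\alpha$ and using (\ref{rel:tB-B}) to rewrite $\widetilde B$ in terms of $B$ and $B^\star$, the nonlinear difference splits into a linearized part involving $B(\e,\u)$, $B(\u,\e)$ and their starred counterparts, a genuinely quadratic part $B(\e,\e)$, and a residual $\boldsymbol R$ gathering the $\alpha$-dependent corrections together with the projection errors $P_n^\perp$. Testing against $\e$, using (\ref{Skew_Symmetric_B-bis}), the continuity estimates of Lemma~\ref{le:opB}, and the consistency estimates of Section 6.3, I expect to bound $\|\boldsymbol R(t)\|^2$ by $C(\lambda_1^{-\frac{1}{2}}\alpha^2+\lambda_{n+1}^{-\frac{3}{2}})$ uniformly in $t$.

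The heart of the argument is the $L^2$-stability hypothesis of Section 6.2: it furnishes a time $T_0$ and constants $\beta>0$, $M$ such that the linearized perturbation evolution around $\u$ is exponentially contractive for $t\ge T_0$. As long as $\|\e(t)\|$ stays below a fixed threshold the quadratic term $B(\e,\e)$ is of higher order and can be absorbed, producing a differential inequality $\frac{d}{dt}\|\e\|^2+\beta\|\e\|^2\le C\|\boldsymbol R\|^2$ on $[T_0,\infty)$. Gr\"onwall's lemma then gives $\|\e(t)\|^2\le e^{-\beta(t-T_0)}\|\e(T_0)\|^2+\frac{C}{\beta}\sup_{s}\|\boldsymbol R(s)\|^2$, while Theorem~\ref{Th1} applied on $[0,T_0]$ bounds $\|\e(T_0)\|^2$ by $K(T_0)(\lambda_1^{-\frac{1}{2}}\alpha^2+\lambda_{n+1}^{-\frac{3}{2}})$. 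Choosing $n\ge n_0$ and $\alpha\le\alpha_0$ makes both the initial error at $T_0$ and the residual small enough that the threshold is never attained, and a continuation (bootstrap) argument propagates the bound to all $t<\infty$ with a time-independent constant $K_\infty$. I expect the main obstacle to be exactly this last point: verifying that the smallness condition underpinning the stability estimate is self-sustaining for every $t$, so that the absorption of the quadratic term remains legitimate and $K_\infty$ is genuinely uniform in time.
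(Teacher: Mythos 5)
Your outline starts from the right ingredients (global a priori bounds, the $\L^2(\Omega)$ stability of $\u$, a threshold/bootstrap argument), but the step you call the heart of the argument is not available, and it is exactly the point where the paper has to work hardest. You claim that stability furnishes a damped differential inequality $\frac{d}{dt}\|\e\|^2+\beta\|\e\|^2\le C\|\boldsymbol R\|^2$ for the error. Definition \ref{def:stability} and Lemma \ref{le:perturbation} give only a semigroup-type decay $\|\boldsymbol\zeta(t)\|^2\le B\|\boldsymbol\zeta_0\|^2e^{-M(t-t_0)}$ for solutions of the \emph{unforced} perturbation problem (\ref{eq:perturbation}), with a constant $B$ that is in general larger than $1$; such decay permits transient growth and cannot be converted into a pointwise-in-time coercivity (negative-definiteness) statement for the linearized operator along $\u$. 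If that damped inequality were available, the whole Heywood--Rannacher--Salvi machinery would be superfluous. What the paper does instead is a finite-window comparison: choose $T$ once and for all with $Be^{-MT}\le\frac14$ (see (\ref{th2-lab1})); on each window $[t_0,t_0+T]$ compare the projected error $\z^\alpha_n=\u^\alpha_n-P_n\u$ with the exact nonlinear perturbation $\boldsymbol\zeta$ launched from $\boldsymbol\zeta(t_0)=\z^\alpha_n(t_0)$; the difference $\w^\alpha_n=\z^\alpha_n-P_n\boldsymbol\zeta$ obeys a Gr\"onwall estimate (\ref{stabL2-w_II}) whose exponential factor $e^{G_{1,\infty}T}$ is harmless because the window length is fixed, and whose forcing is of size $\lambda_1^{-\frac12}\alpha^2+\lambda_{n+1}^{-\frac32}$; stability enters only through the decay of $\boldsymbol\zeta$ across the window, as in (\ref{th2-lab7}). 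A contradiction argument on the first time $\|\z^\alpha_n\|^2$ reaches the threshold $K_{2,\infty}(\lambda_1^{-\frac12}\alpha^2+\lambda_{n+1}^{-\frac32})$ then closes the bootstrap. Your bootstrap, by contrast, rests on the unavailable damped inequality, so it does not close.

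Two further gaps are worth naming. First, your residual $\boldsymbol R$ cannot be bounded by $C(\lambda_1^{-\frac12}\alpha^2+\lambda_{n+1}^{-\frac32})$ \emph{uniformly in $t$}: its coefficients involve $\|A\u(t)\|^2$ and $\|A\u^\alpha_n(t)\|^2$, which under $(\rm H2)$ are not uniformly bounded in time (the paper only controls their integrals over windows, cf. (\ref{second-energy-infty2}) and (\ref{second-energy-infty2-u}), and exponentially weighted integrals as in the corollaries of Section 6.1; a pointwise bound on $\|A\u^\alpha_n\|$ would require the extra hypothesis $\frac{d}{dt}\f\in L^\infty$, see Section \ref{Remarks}). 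The Gr\"onwall estimate must therefore be organized around these integrated quantities, which is what the function $g$ in Lemma \ref{lm:stabL2-w } does. Second, to close the nonlinear estimates in the comparison equation one needs an a priori $D(A^{\frac12})$ bound on the error valid for as long as the $\L^2(\Omega)$ threshold holds; this is Lemma \ref{lm:stabH1-z}, itself proved by a separate contradiction argument and the actual source of the thresholds $n_0$ and $\alpha_0$ in the statement of Theorem \ref{Th2}. Nothing in your proposal produces this ingredient, and without it the absorption of the nonlinear terms in your differential inequality is not justified.
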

\section{Local-in-time error estimates}
In this section we will first establish local-in-time a priori energy estimates for the Galerkin approximations $\u^\alpha_n$ to problem (\ref{Galerkin}) independent of the regularization parameter $\alpha$ and the dimension $n$ of $\V_n$. Then, we will be ready to pass to the limit to obtain a strong solution of the Navier-Stokes equations (\ref{NS}), which will inherit the a priori energy estimates from the Galerkin approximations for $\alpha=0$. Finally, we will use both a priori energy estimates to derive local-in-time estimates for the error $\u^\alpha_n-\u$ in the $L^\infty(0,T; \H)$ and $L^2(0,T; D(A^{\frac{1}{2}}))$ norm regarding the regularization parameter $\alpha$ and the eigenvalues $\lambda_{n+1}$ of the Stokes operator $A$.    

\subsection {Local a priori energy estimates}

\begin{lemma}[First energy estimates for $\u^\alpha_n$] Let  $T>0$ be fixed. There exists a constant $E_1=E_1(\u_0,\f, \nu, T, \Omega, \alpha)$ such that the Galerkin approximation $\u^\alpha_n$ defined by problem (\ref{Galerkin}) satisfies
\begin{equation}\label{first-energy}
\sup_{0\le t\le T}\left[ \|\u^\alpha_n(t)\|^2+\alpha^2\|A^{\frac{1}{2}}\u^\alpha_n(t)\|^2+\nu \int_0^t (\| A^{\frac{1}{2}}\u_n^\alpha(s)\|^2+\alpha^2\|A\u^\alpha_n(s)\|^2) {\rm d} s\right]\le E_1.
\end{equation}
\end{lemma}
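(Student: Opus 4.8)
The plan is to run the standard energy argument by testing the Galerkin system (\ref{Galerkin}) against the velocity $\u_n^\alpha$ itself (not against $\v_n^\alpha$), so that the hyperviscous structure assembles into the natural $\alpha$-weighted energy. Since $\u_n^\alpha\in\V_n$ and $P_n$ is the $\L^2(\Omega)$-orthogonal projection onto $\V_n$, every $P_n$ may be dropped when pairing with $\u_n^\alpha$; writing $\v_n^\alpha=(I+\alpha^2 A)\u_n^\alpha$ and taking the inner product of the first equation in (\ref{Galerkin}) with $\u_n^\alpha$ gives
$$
\left(\frac{d}{dt}(\u_n^\alpha+\alpha^2 A\u_n^\alpha),\u_n^\alpha\right)+\nu\big(A(\u_n^\alpha+\alpha^2A\u_n^\alpha),\u_n^\alpha\big)+(\widetilde B(\u_n^\alpha,\v_n^\alpha),\u_n^\alpha)=(\f,\u_n^\alpha).
$$

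I would then treat each term separately. Using the self-adjointness of $A$ (hence of $A^{1/2}$), the time-derivative term collapses to $\frac12\frac{d}{dt}(\|\u_n^\alpha\|^2+\alpha^2\|A^{1/2}\u_n^\alpha\|^2)$ and the viscous term becomes $\nu\|A^{1/2}\u_n^\alpha\|^2+\nu\alpha^2\|A\u_n^\alpha\|^2$, producing exactly the quantities appearing in (\ref{first-energy}). The crucial step is the vanishing of the nonlinear contribution: since $\u_n^\alpha\in\H$ and $\v_n^\alpha\in D(A^{1/2})$, the skew-symmetry identity (\ref{Skew_Symmetric_tildeB}) yields $(\widetilde B(\u_n^\alpha,\v_n^\alpha),\u_n^\alpha)=0$. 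This cancellation is the whole point of writing the convective term in rotational form, and it is what keeps the estimate free of any nonlinear growth.

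For the forcing I would combine Cauchy--Schwarz with Poincar\'e's inequality (\ref{Poincare}), namely $\|\u_n^\alpha\|\le\lambda_1^{-1/2}\|A^{1/2}\u_n^\alpha\|$, and then Young's inequality to obtain $(\f,\u_n^\alpha)\le \frac{1}{2\nu\lambda_1}\|\f\|^2+\frac{\nu}{2}\|A^{1/2}\u_n^\alpha\|^2$, so that half of the gradient dissipation can be absorbed into the left-hand side. This leaves the differential inequality
$$
\frac{d}{dt}\big(\|\u_n^\alpha\|^2+\alpha^2\|A^{1/2}\u_n^\alpha\|^2\big)+\nu\|A^{1/2}\u_n^\alpha\|^2+2\nu\alpha^2\|A\u_n^\alpha\|^2\le \frac{1}{\nu\lambda_1}\|\f\|^2.
$$

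Finally I would integrate over $[0,t]$ for any $t\le T$. The initial contribution is controlled by the stability of $P_n$: from (\ref{stabL2}) and (\ref{stabH1}), $\|\u_n^\alpha(0)\|=\|P_n\u_0\|\le\|\u_0\|$ and $\|A^{1/2}\u_n^\alpha(0)\|\le\|A^{1/2}\u_0\|$, both finite since $\u_0\in D(A)$; the forcing integral is bounded by $T\|\f\|_{L^\infty(0,T;\L^2(\Omega))}^2$. Taking the supremum over $t\in[0,T]$ yields (\ref{first-energy}) with $E_1$ depending only on $\u_0,\f,\nu,T,\Omega$ (through $\lambda_1$) and $\alpha$. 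There is no real obstacle beyond bookkeeping; the one step that must not be skipped is the disappearance of the nonlinear term via (\ref{Skew_Symmetric_tildeB}), since that is precisely what makes the bound uniform in $n$ and, apart from the $\alpha^2\|A^{1/2}\u_0\|^2$ initial term, essentially uniform in $\alpha$.
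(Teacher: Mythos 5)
Your proposal is correct and follows essentially the same route as the paper: testing $(\ref{Galerkin})_1$ with $\u_n^\alpha$, cancelling the nonlinear term via (\ref{Skew_Symmetric_tildeB}), estimating $(\f,\u_n^\alpha)$ by Schwarz, Poincar\'e and Young, and integrating in time using the projection stability bounds for the initial data. The only cosmetic differences (multiplying the differential inequality by two, and bounding the forcing integral by $T\|\f\|^2_{L^\infty(0,T;\L^2(\Omega))}$ rather than $\int_0^T\|\f(s)\|^2\,{\rm d}s$) are immaterial.
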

\begin{proof} Take the $\L^2(\Omega)$-inner product of $(\ref{Galerkin})_1$ with $\u_n^\alpha$ to get
$$
\frac{1}{2}\frac{d }{d t}(\| \u_n^\alpha\|^2 + \alpha^2 \|A^{\frac{1}{2}}\u_n^\alpha\|^2)+\nu (\|A^{\frac{1}{2}} \u_n^\alpha\|^2+\alpha^2 \|A\u^\alpha_n\|^2)  =  (\f,\u_n^\alpha),
$$
where we have used (\ref{Skew_Symmetric_tildeB}). Thus, applying Schwarz'  inequality,  Poincaré's inequality (\ref{Poincare}) and Young's inequality subsequencently to $(\f, \u^\alpha_n)$, one accomplishes 
\begin{equation}\label{lm10-lab1}
\frac{1}{2}\frac{d }{d t}(\| \u_n^\alpha\|^2 + \alpha^2 \|A^{\frac{1}{2}}\u_n^\alpha\|^2)+\nu(\|A^{\frac{1}{2}} \u_n^\alpha\|^2 +\alpha^2 \|A\u^\alpha_n\|^2) \le \frac{1}{2 \nu \lambda_1 } \|\f\|^2 + \frac{\nu}{2} \| A^{\frac{1}{2}}\u_n^\alpha\|^2.
\end{equation}
Finally, integrating over $(0,t)$, for any $t\in[0,T]$, one obtains
$$
\begin{array}{c}
\displaystyle
\| \u_n^\alpha(t)\|^2 + \alpha^2 \|A^{\frac{1}{2}}\u_n^\alpha(t)\|^2+\nu\int_0^t(\|A^{\frac{1}{2}} \u_n^\alpha(s)\|^2 +\alpha^2 \|A\u^\alpha_n(s)\|^2){\rm d} s
\\
\displaystyle
\le\|\u_0\|^2+\alpha^2\|A^{\frac{1}{2}}\u_0\|^2+\frac{1}{\nu \lambda_1}\int_0^T \|\f(s)\|^2 {\rm d} s:=E_1.
\end{array}
$$
\end{proof} 
\begin{lemma}[Second energy estimates for $\u^\alpha_n$] Let $T>0$ be fixed.  There exists a positive constant $E_2=E_2(\u_0,\f, \nu, T, \Omega,\alpha)$ such that  the Galerkin approximation $\u^\alpha_n$ defined by problem (\ref{Galerkin}) satisfies
\begin{equation}\label{second-energy}
\sup_{0\le t\le T}\left[ \|A^{\frac{1}{2}}\u_n^\alpha(t)\|^2 + \alpha^2 \|A\u_n^\alpha(t)\|^2+\nu \int_0^t (\|A\u_n^\alpha(s)\|^2+\alpha^2\|A^{\frac{3}{2}}\u_n^\alpha(s)\|^2) {\rm d} s\right]\le E_2.
\end{equation} 
\end{lemma}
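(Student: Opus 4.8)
The plan is to repeat the scheme of the first energy estimate (\ref{first-energy}), but to test the Galerkin system with the higher-order multiplier $A\u_n^\alpha$ in place of $\u_n^\alpha$. Since $\u_n^\alpha=\sum_{i\le n}a_i^n\w_i$, the function $A\u_n^\alpha=\sum_{i\le n}\lambda_i a_i^n\w_i$ still lies in $\V_n$, so the projector $P_n$ may be dropped from both the nonlinear and the forcing terms. Taking the $\L^2(\Omega)$-inner product of $(\ref{Galerkin})_1$ with $A\u_n^\alpha$, using that $A$ is self-adjoint and that $\v_n^\alpha=(I+\alpha^2A)\u_n^\alpha$, I would obtain
$$\frac{1}{2}\frac{d}{dt}\left(\|A^{\frac{1}{2}}\u_n^\alpha\|^2+\alpha^2\|A\u_n^\alpha\|^2\right)+\nu\left(\|A\u_n^\alpha\|^2+\alpha^2\|A^{\frac{3}{2}}\u_n^\alpha\|^2\right)=(\f,A\u_n^\alpha)-(\widetilde B(\u_n^\alpha,\v_n^\alpha),A\u_n^\alpha).$$
The left-hand side already reproduces exactly the energy and dissipation quantities of (\ref{second-energy}); the time-regularity $\u_n^\alpha\in H^1(0,T;\V_n)$ justifies these manipulations. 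The forcing term is harmless: by Schwarz's and Young's inequalities $(\f,A\u_n^\alpha)\le\frac{1}{\nu}\|\f\|^2+\frac{\nu}{4}\|A\u_n^\alpha\|^2$, the last piece being absorbed by the dissipation.

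The crux is the nonlinear term. The key point is \emph{not} to estimate $\widetilde B(\u_n^\alpha,\v_n^\alpha)$ with $\v_n^\alpha$ treated as a single argument, since the bounds of Lemma \ref{le:optB} would then force $\|A\v_n^\alpha\|$, hence $\|A^2\u_n^\alpha\|$, to appear — a higher order than the available dissipation $\alpha^2\|A^{\frac{3}{2}}\u_n^\alpha\|^2$. Instead I would split $\v_n^\alpha=\u_n^\alpha+\alpha^2A\u_n^\alpha$ and write
$$(\widetilde B(\u_n^\alpha,\v_n^\alpha),A\u_n^\alpha)=(\widetilde B(\u_n^\alpha,\u_n^\alpha),A\u_n^\alpha)+\alpha^2(\widetilde B(\u_n^\alpha,A\u_n^\alpha),A\u_n^\alpha)=:T_1+T_2.$$
For $T_1$ I would apply (\ref{tBL4L4L2}) with $\u=\v=\u_n^\alpha\in D(A)$ and $\w=A\u_n^\alpha\in\H$, obtaining $T_1\le C\|\u_n^\alpha\|^{\frac{1}{2}}\|A^{\frac{1}{2}}\u_n^\alpha\|\,\|A\u_n^\alpha\|^{\frac{3}{2}}$; a Young inequality with conjugate exponents $(\tfrac43,4)$ gives $T_1\le\frac{\nu}{4}\|A\u_n^\alpha\|^2+C\|\u_n^\alpha\|^2\|A^{\frac{1}{2}}\u_n^\alpha\|^2\cdot\|A^{\frac{1}{2}}\u_n^\alpha\|^2$. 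For the genuinely new term $T_2$, which carries the high-order factor $A\u_n^\alpha$ both as second argument and as test function, I would invoke (\ref{tBL4L2L4}) with $\u=\u_n^\alpha$ and $\v=\w=A\u_n^\alpha$, so that $\|A^{\frac{1}{2}}\v\|\,\|A^{\frac{1}{2}}\w\|^{\frac12}=\|A^{\frac{3}{2}}\u_n^\alpha\|^{\frac32}$, yielding $T_2\le C\alpha^2\|\u_n^\alpha\|^{\frac12}\|A^{\frac12}\u_n^\alpha\|^{\frac12}\|A^{\frac{3}{2}}\u_n^\alpha\|^{\frac32}\|A\u_n^\alpha\|^{\frac12}$. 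Distributing $\alpha^2=(\alpha^2)^{3/4}(\alpha^2)^{1/4}$ and applying Young with exponents $(\tfrac43,4)$ produces $T_2\le\frac{\nu\alpha^2}{4}\|A^{\frac{3}{2}}\u_n^\alpha\|^2+C\|\u_n^\alpha\|^2\|A^{\frac12}\u_n^\alpha\|^2\cdot\alpha^2\|A\u_n^\alpha\|^2$. I expect $T_2$ to be the main obstacle: this is exactly where the second-gradient dissipation $\alpha^2\|A^{\frac{3}{2}}\u_n^\alpha\|^2$ is indispensable, and where two-dimensionality enters, through the scale-invariant continuity of $\widetilde B$.

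Collecting these bounds and absorbing all the dissipation contributions into the left-hand side, I would reach a differential inequality of the form
$$\frac{d}{dt}y+\nu\left(\|A\u_n^\alpha\|^2+\alpha^2\|A^{\frac{3}{2}}\u_n^\alpha\|^2\right)\le\frac{C}{\nu}\|\f\|^2+C\,g(t)\,y,$$
with $y(t)=\|A^{\frac{1}{2}}\u_n^\alpha(t)\|^2+\alpha^2\|A\u_n^\alpha(t)\|^2$ and $g(t)=\|\u_n^\alpha(t)\|^2\|A^{\frac{1}{2}}\u_n^\alpha(t)\|^2$, since both residual nonlinear terms recombine into $g(t)\,y(t)$. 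The decisive observation is that $g$ is integrable in time: by the first energy estimate (\ref{first-energy}), $\int_0^T g\,{\rm d}t\le(\sup_{[0,T]}\|\u_n^\alpha\|^2)\int_0^T\|A^{\frac12}\u_n^\alpha\|^2\,{\rm d}t\le C\,E_1^2/\nu<\infty$. A Gronwall argument then bounds $\sup_{[0,T]}y$ in terms of $y(0)$, $\int_0^T\|\f\|^2\,{\rm d}t$ and $\int_0^T g\,{\rm d}t$, and a further integration of the inequality controls the dissipation integral. Finally, the initial value is estimated by $y(0)=\|A^{\frac{1}{2}}P_n\u_0\|^2+\alpha^2\|AP_n\u_0\|^2\le\|A^{\frac{1}{2}}\u_0\|^2+\alpha^2\|A\u_0\|^2$ using the stability bounds (\ref{stabH1}) and (\ref{stabH2}), which is finite because $\u_0\in D(A)$ under $(\rm H2)$. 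This produces the constant $E_2=E_2(\u_0,\f,\nu,T,\Omega,\alpha)$ and completes the proof.
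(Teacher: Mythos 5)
Your proposal is correct and follows essentially the same route as the paper's proof: testing with $A\u_n^\alpha$, splitting $\v_n^\alpha=\u_n^\alpha+\alpha^2 A\u_n^\alpha$ so the nonlinearity becomes $D_1+D_2$ (your $T_1+T_2$), bounding these with (\ref{tBL4L4L2}) and (\ref{tBL4L2L4}) respectively, absorbing the dissipation via Young with exponents $(\tfrac43,4)$, and closing with Gr\"onwall using the time-integrability of $\|\u_n^\alpha\|^2\|A^{\frac12}\u_n^\alpha\|^2$ guaranteed by the first energy estimate (\ref{first-energy}). Your identification of $T_2$ as the term requiring the hyperviscous dissipation $\alpha^2\|A^{\frac32}\u_n^\alpha\|^2$, and your treatment of the initial data via (\ref{stabH1})--(\ref{stabH2}), match the paper's argument as well.
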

\begin{proof} Take the $\L^2(\Omega)$ inner product of $(\ref{Galerkin})_1$ with $A\u^\alpha_n$ to obtain
\begin{equation}\label{lm11-lab1}
\begin{array}{c}
\displaystyle
\frac{1}{2}\frac{d }{d t}(\|A^{1/2}\u_n^\alpha\|^2+\alpha^2\|A\u^\alpha_n\|^2) + \nu(\|A\u^\alpha_n\|^2+\alpha^2\|A^{\frac{3}{2}}\u^\alpha_n\|^3)
\\
=(\f, A\u^\alpha_n)-(\widetilde B(\u_n^\alpha, \v^\alpha_n), A\u^\alpha_n).
\end{array}
\end{equation}
We shall begin by estimating the term $(\f, A\u^\alpha_n)$. Thus, by Schwarz' and Young's inequality, we have
$$
(\f, A\u^\alpha_n)\le \|\f\|\|A\u^\alpha_n\| \le \frac{C}{\nu} \|\f\|^2+\frac{\nu}{6} \|A\u^\alpha_n\|^2.
$$
Now the relation $\v_n^\alpha=\u_n^\alpha+\alpha^2 A\u^\alpha_n$ allows us to write the term $(\widetilde B(\u_n^\alpha, \v^\alpha_n), A\u^\alpha_n  )$ as:    
$$
\begin{array}{rcl}
(\widetilde B(\u^\alpha_n, \v^\alpha_n), A\u^\alpha_n)&=&(\widetilde B(\u^\alpha_n, \u^\alpha_n), A\u^\alpha_n)+\alpha^2( \widetilde B(\u^\alpha_n, A\u^\alpha_n), A \u^\alpha_n)
\\
&:=& D_1+D_2.
\end{array}
$$
We now combine estimate (\ref{tBL4L4L2})  with Young's inequality to yield   
$$
\begin{array}{rcl}
D_1 
&\le&\displaystyle \frac{C }{\nu^3}\|\u^\alpha_n\|^2 \|A^{\frac{1}{2}}\u^\alpha_n\|^4+\frac{\nu}{6} \|A\u^\alpha_n\|^2
\\
&\le&\displaystyle\frac{C}{\nu^3}E_1\|A^{\frac{1}{2}}\u^\alpha_n\|^2(\|A^{\frac{1}{2}}\u^\alpha_n\|^2+\alpha^2\|A\u^\alpha_n\|^2)
\\
&&\displaystyle+\frac{\nu}{6}( \|A\u^\alpha_n\|^2+\alpha^2\|A^{\frac{3}{2}}\u^\alpha_n\|^2).
\end{array}
$$ 
In a similar fashion, but using estimate (\ref{tBL4L2L4}), it follows the estimate for $D_2$:
$$
\begin{array}{rcl}
D_2
&\le&\displaystyle \frac{C \alpha^2}{\nu^3}\|\u^\alpha_n\|^2\|A^{\frac{1}{2}}\u^\alpha_n\|^2 \|A\u^\alpha_n\|^2+\frac{\nu}{6}\alpha^2 \|A^{\frac{3}{2}}\u^\alpha_n\|^2
\\
&\le&\displaystyle\frac{C}{\nu^3}E_1\|A^{\frac{1}{2}}\u^\alpha_n\|^2(\|A^{\frac{1}{2}}\u^\alpha_n\|^2+\alpha^2\|A\u^\alpha_n\|^2)
\\
&&\displaystyle+\frac{\nu}{6}( \|A\u^\alpha_n\|^2+\alpha^2\|A^{\frac{3}{2}}\u^\alpha_n\|^2).
\end{array} 
$$
Putting all this together into (\ref{lm11-lab1}) gives
\begin{equation}\label{lm11-lab2}
\begin{array}{c}
\displaystyle
\frac{d }{d t}(\|A^{\frac{1}{2}}\u_n^\alpha\|^2+\alpha^2\|A\u^\alpha_n\|^2)+ \nu(\|A\u^\alpha_n\|^2+\alpha^2\|A^{\frac{3}{2}}\u^\alpha_n\|^2)
\\
\displaystyle\le \frac{C}{\nu^3} E_1\|A^{\frac{1}{2}}\u^\alpha_n\|^2(\|A^{\frac{1}{2}}\u^\alpha_n\|^2+\alpha^2\|A\u^\alpha_n\|^2)+\frac{C}{\nu}\|\f\|^2.
\end{array}
\end{equation}
Finally, Grönwall's inequality leads to 
$$
\begin{array}{c}
\displaystyle
\|A^{\frac{1}{2}}\u_n^\alpha(t)\|^2+\alpha^2\|A\u^\alpha_n(t)\|^2 +\nu \int_0^t (\|A\u^\alpha_n(s)\|^2+\alpha^2\|A^\frac{3}{2}\u^\alpha_n(s)\|^2){\rm d} s
\\
\displaystyle\le e^{\frac{C}{\nu^4}E_1^2}\left\{\|A^{\frac{1}{2}}\u_0\|^2+\alpha^2\|A\u_0\|^2+ \frac{C}{\nu} \int_0^T\|\f(s)\|^2{\rm d} s\right\}:=E_2,
\end{array}
$$
for all $t\in [0,T]$.
\end{proof}

\begin{lemma} Let $T>0$ be fixed. There exists a positive constant $E_3=E_3(\u_0,\f, \nu, T, \Omega,\alpha)$ such that  the Galerkin approximation $\u^\alpha_n$ defined by problem (\ref{Galerkin}) satisfies
\begin{equation}\label{est:derivative-u}
\int_0^T \|\frac{d}{dt}\u_n^\alpha(t)\|^2 d{\rm t}\le E_3.
\end{equation}
\end{lemma}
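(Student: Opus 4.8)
The plan is to test the Galerkin equation against the time derivative $\frac{d}{dt}\u_n^\alpha$, which is legitimate since $\u_n^\alpha\in H^1(0,T;\V_n)$ under $(\rm H2)$. Taking the $\L^2(\Omega)$-inner product of $(\ref{Galerkin})_1$ with $\frac{d}{dt}\u_n^\alpha$, recalling $\v_n^\alpha=(I+\alpha^2 A)\u_n^\alpha$ so that $\frac{d}{dt}\v_n^\alpha=\frac{d}{dt}\u_n^\alpha+\alpha^2 A\frac{d}{dt}\u_n^\alpha$ and $\nu A\v_n^\alpha=\nu A\u_n^\alpha+\nu\alpha^2 A^2\u_n^\alpha$, and using that $\frac{d}{dt}\u_n^\alpha\in\V_n$ so the projector $P_n$ in front of $\f$ and $\widetilde B$ may be dropped, I would arrive at the identity
\[
\|\tfrac{d}{dt}\u_n^\alpha\|^2+\alpha^2\|A^{\frac{1}{2}}\tfrac{d}{dt}\u_n^\alpha\|^2+\frac{\nu}{2}\frac{d}{dt}\big(\|A^{\frac{1}{2}}\u_n^\alpha\|^2+\alpha^2\|A\u_n^\alpha\|^2\big)=(\f,\tfrac{d}{dt}\u_n^\alpha)-(\widetilde B(\u_n^\alpha,\v_n^\alpha),\tfrac{d}{dt}\u_n^\alpha).
\]
Note that the Helmholtz regularization supplies, on the left-hand side, not only $\|\frac{d}{dt}\u_n^\alpha\|^2$ but also the extra dissipation $\alpha^2\|A^{\frac{1}{2}}\frac{d}{dt}\u_n^\alpha\|^2$, which will be essential below.

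Next I would bound the right-hand side. The forcing term is controlled by Schwarz' and Young's inequality, $(\f,\frac{d}{dt}\u_n^\alpha)\le\frac{1}{4}\|\frac{d}{dt}\u_n^\alpha\|^2+\|\f\|^2$, the first term being absorbed on the left and the second being integrable since $\f\in L^\infty(0,T;\L^2(\Omega))$. For the convective term I split, using $\v_n^\alpha=\u_n^\alpha+\alpha^2 A\u_n^\alpha$,
\[
(\widetilde B(\u_n^\alpha,\v_n^\alpha),\tfrac{d}{dt}\u_n^\alpha)=(\widetilde B(\u_n^\alpha,\u_n^\alpha),\tfrac{d}{dt}\u_n^\alpha)+\alpha^2(\widetilde B(\u_n^\alpha,A\u_n^\alpha),\tfrac{d}{dt}\u_n^\alpha)=:D_1+D_2.
\]
I estimate $D_1$ by $(\ref{tBL4L4L2})$ with $\w=\frac{d}{dt}\u_n^\alpha\in\H$ followed by Young's inequality, obtaining $\frac{1}{8}\|\frac{d}{dt}\u_n^\alpha\|^2+C\|\u_n^\alpha\|\,\|A^{\frac{1}{2}}\u_n^\alpha\|^2\,\|A\u_n^\alpha\|$; by $(\ref{first-energy})$ and $(\ref{second-energy})$ the first three factors of the remainder are uniformly bounded and $\|A\u_n^\alpha\|\in L^2(0,T)\subset L^1(0,T)$, so its time integral is finite.

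The term $D_2$ is the main obstacle. Because $A\u_n^\alpha$ only sits in $D(A^{\frac{1}{2}})$ within the reach of the energy estimates, I cannot keep $\frac{d}{dt}\u_n^\alpha$ in $\H$; instead I use $(\ref{tBL4L2L4})$ with $\u=\u_n^\alpha$, $\v=A\u_n^\alpha$ and $\w=\frac{d}{dt}\u_n^\alpha\in D(A^{\frac{1}{2}})$, which yields a bound of the form $C\alpha^2\|\u_n^\alpha\|^{\frac{1}{2}}\|A^{\frac{1}{2}}\u_n^\alpha\|^{\frac{1}{2}}\|A^{\frac{3}{2}}\u_n^\alpha\|\,\|A^{\frac{1}{2}}\frac{d}{dt}\u_n^\alpha\|^{\frac{1}{2}}\|\frac{d}{dt}\u_n^\alpha\|^{\frac{1}{2}}$. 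Applying Young's inequality tuned so as to absorb both $\frac{\alpha^2}{4}\|A^{\frac{1}{2}}\frac{d}{dt}\u_n^\alpha\|^2$ and $\frac{1}{8}\|\frac{d}{dt}\u_n^\alpha\|^2$ on the left, the remainder is $C\alpha^3\|\u_n^\alpha\|\,\|A^{\frac{1}{2}}\u_n^\alpha\|\,\|A^{\frac{3}{2}}\u_n^\alpha\|^2$, whose time integral is finite precisely because $(\ref{second-energy})$ provides $\alpha^2\int_0^T\|A^{\frac{3}{2}}\u_n^\alpha\|^2\,{\rm d}s\le E_2/\nu$. This is the crux: one must trade the high-order norm $\|A^{\frac{3}{2}}\u_n^\alpha\|$ against the regularization-induced dissipation $\alpha^2\|A^{\frac{1}{2}}\frac{d}{dt}\u_n^\alpha\|^2$.

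Finally I would integrate over $(0,T)$. After absorbing the $\frac{1}{2}\|\frac{d}{dt}\u_n^\alpha\|^2$ and $\frac{\alpha^2}{2}\|A^{\frac{1}{2}}\frac{d}{dt}\u_n^\alpha\|^2$ contributions, the total-derivative term telescopes; its value at $t=T$ is bounded by $E_2$ via $(\ref{second-energy})$, while at $t=0$ the initial datum $\u_n^\alpha(0)=P_n\u_0$ is handled by the stability estimates $(\ref{stabH1})$ and $(\ref{stabH2})$, giving $\|A^{\frac{1}{2}}P_n\u_0\|\le\|A^{\frac{1}{2}}\u_0\|$ and $\|AP_n\u_0\|\le\|A\u_0\|$, both finite since $\u_0\in D(A)$. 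Collecting all bounded remainders on the right then produces the desired estimate $\int_0^T\|\frac{d}{dt}\u_n^\alpha\|^2\,{\rm d}t\le E_3$ with $E_3=E_3(\u_0,\f,\nu,T,\Omega,\alpha)$.
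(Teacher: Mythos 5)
Your proof is correct, but it follows a genuinely different route from the paper's. You test the Galerkin equation $(\ref{Galerkin})_1$ with $\frac{d}{dt}\u_n^\alpha$, which produces the extra dissipation $\alpha^2\|A^{\frac{1}{2}}\frac{d}{dt}\u_n^\alpha\|^2$ on the left, and you then absorb the hyperstress contribution $D_2$ by a three-factor Young inequality, trading the high-order norm $\|A^{\frac{3}{2}}\u_n^\alpha\|$ against both that $\alpha$-weighted dissipation and the bound $\nu\alpha^2\int_0^T\|A^{\frac{3}{2}}\u_n^\alpha\|^2\,{\rm d}s\le E_2$ from (\ref{second-energy}); the viscous terms appear as exact time derivatives and are handled at $t=0$ via (\ref{stabH1})--(\ref{stabH2}). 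The paper instead never integrates by parts in time: it applies the resolvent $(I+\alpha^2 A)^{-1}$ to $(\ref{Galerkin})_1$ so as to solve for $\frac{d}{dt}\u_n^\alpha$ explicitly, and then estimates each term of the resulting right-hand side pointwise in time, using the contraction properties (\ref{ResolventL2}) and (\ref{stabL2}) together with Ladyzhenskaya-type bounds on $\u_n^\alpha\times(\nabla\times\v_n^\alpha)$ and the energy estimates (\ref{first-energy})--(\ref{second-energy}). The paper's argument is shorter and avoids any delicate balancing of $\alpha$-weights in Young's inequality, since the operator inversion does the absorption for free; your argument is the classical energy-method alternative and yields slightly more information, namely a bound on $\alpha^2\int_0^T\|A^{\frac{1}{2}}\frac{d}{dt}\u_n^\alpha(t)\|^2\,{\rm d}t$ as well, though both approaches ultimately rest on the same a priori estimates and both produce a constant $E_3$ depending on $\alpha$, as the statement permits.
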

\begin{proof} Applying the operator $(I+\alpha^2 A)^{-1}$ to $(\ref{Galerkin})_1$, we write 
$$
\begin{array}{rcl}
\displaystyle
\frac{d\u_n^\alpha}{dt}&=&-\nu A\u^\alpha_n-(I+\alpha^2 A)^{-1} P_n \widetilde B(\u_n^\alpha, \v_n^\alpha )
\\
&&+(I+\alpha^2 A)^{-1}P_n\f.
\end{array}
$$
Thus, we have
$$
\begin{array}{rcl}
\displaystyle
\|\frac{d\u_n^\alpha}{dt}\|^2&\le& C \nu^2 \|A\u^\alpha_n\|^2+ C \|(I+\alpha^2 A)^{-1} P_n \widetilde B(\u_n^\alpha, \v_n^\alpha )\|^2
\\
&&+ C\|(I+\alpha^2 A)^{-1}P_n\f\|^2.
\end{array}
$$
 
It is clear from (\ref{second-energy}) that 
$$\nu^2\int_0^T\|A\u^\alpha_n(s)\|^2 d{\rm s}\le \nu E_2$$
From (\ref{ResolventL2}) and (\ref{stabL2}), we have 
$$
\begin{array}{rcl}
\|(I+\alpha^2 A)^{-1}P_n\widetilde B(\u^\alpha_n,\v^\alpha_n)\|^2&\le& \|P_n\widetilde B(\u^\alpha_n,\v^\alpha_n)\|^2\le \|\u^\alpha_n\times(\nabla\times\v^\alpha_n))\|^2
\\
&\le& \|\u_n^\alpha\|_{L^4(\Omega)}^2 \|A^{\frac{1}{2}}\v^\alpha_n\|_{L^4(\Omega)}^2\le C \|\u^\alpha_n\| \|A^{\frac{1}{2}}\u^\alpha_n\| \|A^{\frac{1}{2}}\v^\alpha_n\| \|A\v^\alpha_n\|
\\
&\le& \|\u^\alpha_n\| \|A^{\frac{1}{2}}\u^\alpha_n\| (\|A^{\frac{1}{2}}\u^\alpha_n\|+\alpha\|A\u^\alpha_n\|) (\|A\u^\alpha_n\|+\alpha\|A^{\frac{3}{2}}\u^\alpha_n\|).
\end{array} 
$$
Using Schwarz' inequality and integrating over $[0,T]$ gives 
$$
\begin{array}{rcl}
\displaystyle
\int_0^T\|P_n\widetilde B(\u^\alpha_n(s),\v^\alpha_n(s))\|^2\,d{\rm s}&\le& 
\displaystyle
\frac{1}{\nu^2}\int_0^T \|\u^\alpha_n(s)\|^2 \|A^{\frac{1}{2}}\u^\alpha_n(s)\|^2 (\|A^{\frac{1}{2}}\u^\alpha_n(s)\|^2+\alpha^2\|A\u^\alpha_n(s)\|^2)\,d{\rm s}
\\
&&
\displaystyle
+\nu^2 \int_0^T (\|A\u^\alpha_n(s)\|^2+\alpha^2\|A^{\frac{3}{2}}\u^\alpha_n(s)\|^2)d{\rm s}
\\
&\le&\displaystyle E_2(\frac{1}{\nu^2} E_1 E_2 T+ \nu).
\end{array} 
$$
Moreover, we have
$$\int_0^T\|(I+\alpha^2 A)^{-1}P_n\f(s)\| d{\rm s}\le\int_0^T\|\f(s)\|^2\,d{\rm s}.$$
Therefore,
$$
\int_0^T\|\frac{d}{dt}\u^\alpha_n(s)\|^2 d{\rm s}\le E_2(\frac{1}{\nu^2}E_1 E_2 T+ 2\nu)+\|\f\|^2_{L^2(0,T;L^2(\Omega))}:=E_3.
$$

\end{proof}

The bound (\ref{second-energy}) on the sequence $\{\u^\alpha_n\}_{\alpha,n}$ allows to prove that there exist a subsequence $\{\u^{\alpha_j}_{n_j}\}$ and a function $\u$ such that   
$$
\begin{array}{rcl}
\u^{\alpha_j}_{n_j}\to\u &\hbox{ weakly-$\star$ in }&  L^\infty(0,T; D(A^{\frac{1}{2}})),  
\\
\u^{\alpha_j}_{n_j} \to \u  &\hbox{ weakly in }&L^2(0,T; D(A)),
\end{array}
$$
and, by a compactness result of the Aubin-Lions type together with (\ref{est:derivative-u}), such that
$$
\u^{\alpha_j}_{n_j} \to \u\quad \hbox{ strongly in }\quad  L^2(0,T; D(A^{\frac{1}{2}})),
$$
with $(\alpha_j, n_j)\to (0,\infty)$ as $j\to\infty$, where $\u$ is a strong solution of the Navier-Stokes equations. The passage to the limit is routine. This convergence is discussed in detail by Foias et al. in \cite{Foias-Holm-Titi} for weak solutions.  

The strong solution $\u$ to the Navier-Stokes equations (\ref{NS}) inherits the bounds (\ref{first-energy}) and (\ref{second-energy}) for $\alpha=0$ due to the lower semi-continuity of  the $L^\infty(0,T; \H)$  and $L^2(0,T; D(A^{\frac{1}{2}}))$ norm.
 
\begin{theorem} Let $T>0$ be fixed. There exist two positive constants $\widetilde E_1=\widetilde E_1(\u_0,\f, \nu, T, \Omega)$ and  $\widetilde E_2=\widetilde E_2(\u_0,\f, \nu, T, \Omega)$, which are $E_1$ and $E_2$ with $\alpha=0$, respectively, such that the unique solution $\u$ to problem (\ref{NS})  satisfies
$$
\sup_{0\le t\le T} \left[ \|\u(t)\|^2+\nu\int_0^t\|A^{\frac{1}{2}}\u(s)\|^2{\rm d} s\right]\le \widetilde E_1
$$
and
$$
\sup_{0\le t\le T} \left[ \|A^{\frac{1}{2}}\u(t)\|^2+\nu\int_0^t\|A\u(s)\|^2{\rm d} s\right]\le \widetilde E_2.
$$
\end{theorem}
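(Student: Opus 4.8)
The plan is to pass to the limit in the two a priori energy estimates (\ref{first-energy}) and (\ref{second-energy}) along the subsequence $\{\u^{\alpha_j}_{n_j}\}$ already extracted, exploiting weak lower semicontinuity of the relevant norms. The starting observation is that $E_1$ and $E_2$ depend on $\alpha$ only through the terms $\alpha^2\|A^{\frac{1}{2}}\u_0\|^2$ and $\alpha^2\|A\u_0\|^2$; since $\u_0\in D(A)$ by $(\rm H2)$, both are finite and tend to $0$ as $\alpha\to 0$, so that $E_1\to\widetilde E_1$ and $E_2\to\widetilde E_2$, where $\widetilde E_1,\widetilde E_2$ are precisely $E_1,E_2$ evaluated at $\alpha=0$.

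First I would fix $t\in[0,T]$ and discard the nonnegative terms carrying the factor $\alpha^2$ on the left-hand side of (\ref{first-energy}), which gives
\begin{equation*}
\|\u^{\alpha_j}_{n_j}(t)\|^2+\nu\int_0^t\|A^{\frac{1}{2}}\u^{\alpha_j}_{n_j}(s)\|^2\,{\rm d} s\le E_1^{(j)},\qquad E_1^{(j)}\to\widetilde E_1.
\end{equation*}
The dissipation integral passes to the limit because the strong convergence $\u^{\alpha_j}_{n_j}\to\u$ in $L^2(0,T; D(A^{\frac{1}{2}}))$ makes $\|A^{\frac{1}{2}}\u^{\alpha_j}_{n_j}\|^2\to\|A^{\frac{1}{2}}\u\|^2$ in $L^1(0,T)$, hence $\int_0^t\|A^{\frac{1}{2}}\u^{\alpha_j}_{n_j}\|^2\to\int_0^t\|A^{\frac{1}{2}}\u\|^2$ for every $t$. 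For the pointwise term I would invoke $\u^{\alpha_j}_{n_j}(t)\rightharpoonup\u(t)$ in $\H$ together with the weak lower semicontinuity of $\|\cdot\|$ to get $\|\u(t)\|^2\le\liminf_j\|\u^{\alpha_j}_{n_j}(t)\|^2$. Combining the two and taking the supremum over $t\in[0,T]$ yields the first bound.

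The second bound follows along the same lines from (\ref{second-energy}). After discarding the $\alpha^2$-terms one is left with $\|A^{\frac{1}{2}}\u^{\alpha_j}_{n_j}(t)\|^2+\nu\int_0^t\|A\u^{\alpha_j}_{n_j}\|^2\,{\rm d} s\le E_2^{(j)}$. Here only weak convergence $\u^{\alpha_j}_{n_j}\rightharpoonup\u$ in $L^2(0,T; D(A))$ is available, so for the dissipation integral I would use the weak lower semicontinuity of the convex functional $\v\mapsto\int_0^t\|A\v\|^2\,{\rm d} s$ on $L^2(0,t; D(A))$, while the pointwise term is controlled by the weak lower semicontinuity of $\|A^{\frac{1}{2}}\cdot\|$ together with $\u^{\alpha_j}_{n_j}(t)\rightharpoonup\u(t)$ in $D(A^{\frac{1}{2}})$. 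Since the sum of these two lower-semicontinuous contributions is dominated by $\liminf_j E_2^{(j)}=\widetilde E_2$ (using $\liminf a_j+\liminf b_j\le\liminf(a_j+b_j)$), taking the supremum over $t$ gives the claim.

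The step requiring the most care is the pointwise-in-time weak convergence $\u^{\alpha_j}_{n_j}(t)\rightharpoonup\u(t)$ needed at \emph{every} $t\in[0,T]$, not merely for a.e. $t$, in order to control the suprema. I would establish it from the uniform bound of $\{\u^{\alpha_j}_{n_j}\}$ in $L^\infty(0,T; D(A^{\frac{1}{2}}))$ coming from (\ref{second-energy}) together with the bound (\ref{est:derivative-u}) on $\tfrac{d}{dt}\u^{\alpha_j}_{n_j}$ in $L^2(0,T;\H)$: the latter gives uniform H\"older-$\tfrac12$ equicontinuity into $\H$, and combined with boundedness in $D(A^{\frac{1}{2}})$ and the compact embedding $D(A^{\frac{1}{2}})\hookrightarrow\H$, an Arzel\`a--Ascoli argument yields (after a further subsequence) $\u^{\alpha_j}_{n_j}\to\u$ in $C([0,T];\H)$, whence $\u^{\alpha_j}_{n_j}(t)$ is bounded in $D(A^{\frac{1}{2}})$ with strong $\H$-limit $\u(t)$ and therefore converges weakly in $D(A^{\frac{1}{2}})$ at each $t$; the limit is identified with the already-constructed $\u$ by uniqueness. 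Alternatively, one may bypass this point entirely by noting that the strong solution obeys the energy equality $\frac{d}{dt}\|\u\|^2+2\nu\|A^{\frac{1}{2}}\u\|^2=2(\f,\u)$, and its $D(A^{\frac{1}{2}})$ analogue, whose integration reproduces both bounds directly.
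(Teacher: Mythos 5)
Your proposal is correct and follows essentially the same route as the paper: the paper obtains the theorem by passing to the limit along the subsequence $\{\u^{\alpha_j}_{n_j}\}$ and invoking lower semicontinuity of the $L^\infty(0,T;\H)$ and $L^2(0,T;D(A^{\frac{1}{2}}))$ (and correspondingly higher-order) norms, exactly as you do. The only difference is that you spell out the details the paper dismisses as routine--the vanishing of the $\alpha^2$-terms, the $\liminf$ bookkeeping, and the pointwise-in-time weak convergence needed for the supremum, which your Arzel\`a--Ascoli argument (or the energy-equality alternative) handles correctly.
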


\subsection {Proof of Theorem \ref{Th1}}

We split the error $\u-\u^\alpha_n$ into to two parts, $\e_n =\u-\boldsymbol\eta_n=P^\perp_n\u $, where  $\boldsymbol\eta_n=P_n\u$, and  $\z_n^\alpha =\u_n^\alpha -\boldsymbol\eta_n$. Thus $\u -\u_n^\alpha = \e_n -\z^\alpha_n$. 

The next result concerns the error estimates for $\e_n$. 
\begin{lemma} Let $T>0$ be fixed. There exists a positive constant $K_1=K_1(\u_0,\f, \nu, \Omega)$  such that 
\begin{equation}\label{error-e}
\sup_{0\le t\le T}\|\e_n(t)\|^2\le K_1  \lambda_{n+1}^{-\frac{3}{2}}.
\end{equation}
\end{lemma}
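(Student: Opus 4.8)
The plan is to reduce the claim to a single uniform-in-time regularity bound for the exact solution $\u$ and then to exploit the spectral gap. Since $\e_n(t)=P_n^\perp\u(t)=\sum_{k>n}(\u(t),\w_k)\w_k$, writing $\|A^{\frac{3}{4}}\u\|^2=\sum_k\lambda_k^{\frac{3}{2}}|(\u,\w_k)|^2$ and factoring out $\lambda_{n+1}^{-\frac{3}{2}}\le\lambda_k^{-\frac{3}{2}}$ for every $k>n$ gives
$$\|\e_n(t)\|^2=\sum_{k>n}|(\u(t),\w_k)|^2\le\lambda_{n+1}^{-\frac{3}{2}}\sum_{k>n}\lambda_k^{\frac{3}{2}}|(\u(t),\w_k)|^2\le\lambda_{n+1}^{-\frac{3}{2}}\|A^{\frac{3}{4}}\u(t)\|^2.$$
This is the natural $D(A^{\frac{3}{4}})$-interpolant of the estimates (\ref{errorL2H1}) and (\ref{errorH1H2yL2H2}) of Lemma \ref{le:errors}. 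Hence it suffices to prove that $\u\in L^\infty(0,T;D(A^{\frac{3}{4}}))$ with a bound depending only on the data, since one may then take $K_1=\sup_{0\le t\le T}\|A^{\frac{3}{4}}\u(t)\|^2$.

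The crux is therefore the extra half-derivative of regularity, which I would extract from the Duhamel (mild) representation of the solution of (\ref{NS}),
$$\u(t)=e^{-\nu tA}\u_0+\int_0^t e^{-\nu(t-s)A}\big(P\f(s)-B(\u(s),\u(s))\big)\,{\rm d}s,$$
together with the smoothing estimate $\|A^{\frac{3}{4}}e^{-\nu\tau A}\|_{\mathcal L(\H)}\le C(\nu\tau)^{-3/4}$ for the analytic Stokes semigroup. Applying $A^{\frac{3}{4}}$ and taking norms, the initial term is controlled by $\|A^{\frac{3}{4}}\u_0\|\le\lambda_1^{-1/4}\|A\u_0\|$ because $\u_0\in D(A)$, while the forcing term is bounded by $C\nu^{-3/4}\|\f\|_{L^\infty(0,T;\H)}\int_0^t(t-s)^{-3/4}\,{\rm d}s$, which is finite precisely because $\tfrac{3}{4}<1$. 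For the quadratic term I would use the two-dimensional product estimate $\|B(\u,\u)\|\le C\|A^{\frac{1}{4}}\u\|\,\|A^{\frac{3}{4}}\u\|$ (coming from $\H^{1/2}\hookrightarrow\L^4$), in which $\|A^{\frac{1}{4}}\u\|\le\|\u\|^{1/2}\|A^{\frac{1}{2}}\u\|^{1/2}$ is already bounded by $\widetilde E_1$ and $\widetilde E_2$. Setting $\phi(t)=\sup_{0\le\tau\le t}\|A^{\frac{3}{4}}\u(\tau)\|$, the three estimates yield an inequality of the form $\phi(t)\le C_1+C_2\,t^{1/4}\phi(t)$ with $C_1$ bounded on $[0,T]$; absorbing the last term on a short interval $[0,t_\ast]$ where $C_2\,t_\ast^{1/4}\le\tfrac12$ and iterating finitely many times over $[0,T]$ closes the bound.

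The main obstacle is exactly this borderline regularity step. One cannot reach $L^\infty(0,T;D(A))$ — which would yield the stronger rate $\lambda_{n+1}^{-2}$ — because with $\f$ only in $L^\infty(0,T;\H)$ the endpoint of maximal regularity fails; in particular a naive energy estimate tested against $A^{\frac{3}{2}}\u$ leaves the forcing contribution $(\f,A^{\frac{3}{2}}\u)$ uncontrolled, as closing it would require $\f\in D(A^{\frac{1}{4}})$. The Duhamel route circumvents this because the time singularity $(\nu\tau)^{-3/4}$ is integrable for every exponent strictly below $1$, which is what fixes the admissible regularity at $D(A^{\frac{3}{4}})$ and hence the rate at $\lambda_{n+1}^{-\frac{3}{2}}$. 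A secondary point is the self-referential appearance of $\|A^{\frac{3}{4}}\u\|$ in the nonlinear term; handling it by the short-interval absorption above (or, to keep $K_1$ independent of $T$ as stated, by using the exponentially decaying bound $\|A^{\frac{3}{4}}e^{-\nu\tau A}\|_{\mathcal L(\H)}\le C(\nu\tau)^{-3/4}e^{-\frac{\nu\lambda_1}{2}\tau}$ furnished by the spectral gap $\lambda_1>0$) is the one place where genuine care is needed.
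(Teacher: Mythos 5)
Your argument is sound in substance, but it takes a genuinely different route from the paper. The paper never uses more regularity of $\u$ than the bound $\widetilde E_2$: it applies $P_n^\perp$ to (\ref{NS}), tests the resulting equation for $\e_n$ with $\e_n$, bounds the nonlinearity via (\ref{BL4L2L4}) and (\ref{errorL2H1}) so that, after Young's inequality, the right-hand side becomes a source of order $\lambda_{n+1}^{-1/2}$, and then exploits the spectral damping $\nu\|A^{\frac{1}{2}}\e_n\|^2\ge\nu\lambda_{n+1}\|\e_n\|^2$; integrating against $e^{-\nu\lambda_{n+1}(t-s)}$ contributes the remaining factor $\lambda_{n+1}^{-1}$, so the rate $\lambda_{n+1}^{-3/2}$ arises as $(-\tfrac12)+(-1)$, with the initial layer absorbed by $\|\e_n(0)\|^2\le C\lambda_{n+1}^{-2}\|A\u_0\|^2$. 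You instead establish the uniform regularity $\u\in L^\infty(0,T;D(A^{\frac{3}{4}}))$ by Duhamel and semigroup smoothing and conclude purely spectrally. Your route makes transparent \emph{why} the exponent is exactly $\tfrac32$: it is the maximal uniform fractional regularity compatible with $\f\in L^\infty(0,T;\H)$, the endpoint $D(A)$ failing for lack of maximal regularity --- which is consistent with the paper's remark in Section~\ref{Remarks} that assuming $\frac{d}{dt}\f\in L^\infty(0,T;L^2(\Omega))$ upgrades the rate to $\lambda_{n+1}^{-2}$. The price is machinery the paper avoids entirely: analyticity of the Stokes semigroup and the fractional-power identifications behind $\|B(\u,\u)\|\le C\|A^{\frac{1}{4}}\u\|\,\|A^{\frac{3}{4}}\u\|$ (the paper's Lemma \ref{le:Poincare} records only the cases $\beta=\tfrac12,1,\tfrac32$), whereas the paper's proof is a short self-contained energy argument whose structure is reused verbatim in the global-in-time section.

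Two points in your write-up need care. First, the absorption step $\phi(t)\le C_1+C_2\,t^{1/4}\phi(t)$ presupposes $\phi(t)<\infty$; to make this rigorous you should either run the Duhamel estimate on the Galerkin approximations (where all quantities are finite) and pass to the limit by lower semicontinuity, or invoke local well-posedness in $D(A^{\frac{3}{4}})$ and a continuation argument. Second, your parenthetical claim that the decaying bound $\|A^{\frac{3}{4}}e^{-\nu\tau A}\|_{\mathcal L(\H)}\le C(\nu\tau)^{-3/4}e^{-\frac{\nu\lambda_1}{2}\tau}$ makes $K_1$ independent of $T$ does not follow: with that bound the self-referential term becomes $C_2'\,\phi(t)$ with $C_2'$ a fixed, data-dependent constant, and absorbing it requires $C_2'<1$, i.e.\ a smallness condition on the data. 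This is harmless for the lemma as used, since the paper's own constant also depends on $T$ through $\widetilde E_2$ (despite the dependency list in the statement), and the genuinely $T$-uniform estimate is the content of Theorem \ref{Th2}, obtained via the stability hypothesis rather than from this lemma.
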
 
\begin{proof}
Applying $P^\perp_n$ to (\ref{NS}), we get 
\begin{equation}\label{lm13-lab1}
\frac{d}{dt}\e_n+\nu A\e_n=-P_n^\perp B(\u, \u)+P_n^\perp\f.
\end{equation}
Next, take the $\L^2(\Omega)$-inner product of (\ref{lm13-lab1}) with $\e_n$ to obtain
\begin{equation}\label{lm13-lab2}
\frac{1}{2}\frac{d}{dt} \|\e_n\|^2+\nu\|A^{\frac{1}{2}}\e_n\|^2=-(P^\perp_n B(\u, \u), \e_n)+(\f, \e_n ).
\end{equation}
Let us bound the right-hand side of  (\ref{lm13-lab2}). Making use of (\ref{BL4L2L4}) and (\ref{errorL2H1}), we estimate
$$
\begin{array}{rcl}
(P^\perp_n B(\u, \u), \e_n)&\le& C \|\u\|^{\frac{1}{2}} \|A^{\frac{1}{2}}\u\|^{\frac{3}{2}} \|\e_n\|^{\frac{1}{2}} \|A^{\frac{1}{2}} \e_n\|^{\frac{1}{2}}
\\
&\le& C \|\u\|^{\frac{1}{2}} \|A^{\frac{1}{2}}\u\|^\frac{3}{2} \lambda_{n+1}^{-\frac{1}{4}} \|A^{\frac{1}{2}} \e_n\|
\\
&\le&\displaystyle\frac{C}{\nu}\lambda_{n+1}^{-\frac{1}{2}} \|\u\| \|A^{\frac{1}{2}}\u\|^3+\frac{\nu}{4} \|A^{\frac{1}{2}}\e_n \|^2
\\
&\le&\displaystyle\frac{C}{\nu}\lambda_{1}^{-\frac{1}{2}}\lambda_{n+1}^{-\frac{1}{2}} \widetilde E^2_2+\frac{\nu}{4} \|A^{\frac{1}{2}}\e_n \|^2,
\end{array}
$$
where we have used (\ref{Poincare}) in the last line. Also, 
$$
(\f, \e_n )\le \|\f\| \|\e_n\|\le \lambda^{-\frac{1}{2}}_{n+1} \|\f\| \|A^{\frac{1}{2}}\e_n\|\le\frac{C}{\nu}\lambda^{-1}_{n+1} \|\f\|^2+\frac{\nu}{4}\|A^{\frac{1}{2}}\e_n\|^2.
$$
Thus we achieve the following differential inequality:
$$
\frac{d}{dt} \|\e_n\|^2+\nu\|A^{\frac{1}{2}}\e_n\|^2\le \frac{C}{\nu}\lambda^{-\frac{1}{2}}_1  \lambda^{-\frac{1}{2}}_{n+1}\widetilde E_2^2+\frac{C}{\nu}\lambda^{-1}_{n+1}\|\f\|^2.
$$
Taking advantage of  (\ref{errorL2H1}), we get
$$
\frac{d}{dt} \|\e_n\|^2+\nu\lambda_{n+1}\|\e_n\|^2\le\frac{C}{\nu} \lambda^{-\frac{1}{2}}_1  \lambda^{-\frac{1}{2}}_{n+1}\widetilde E_2^2+\frac{C}{\nu}\lambda^{-1}_{n+1}\|\f\|^2.
$$
Therefore,
$$
\frac{d}{dt}(e^{\nu\lambda_{n+1} t} \|\e_n\|^2)\le\frac{C}{\nu} e^{\nu\lambda_{n+1} t} \lambda^{-\frac{1}{2}}_1  \lambda^{-\frac{1}{2}}_{n+1}\widetilde E_2^2  +\frac{C}{\nu}e^{\nu\lambda_{n+1} t} \lambda^{-1}_{n+1} \|\f\|^2.
$$
Integrating over $(0,t)$, for any $t\in[0,T]$, we find 
$$ 
\begin{array}{rcl}
\|\e_n(t)\|^2 &\le&\displaystyle e^{-\nu\lambda_{n+1} t} \|\e_n(0)\|^2+\frac{C }{\nu}\int_0^t e^{-\nu\lambda_{n+1}(t-s)}(\lambda^{-\frac{1}{2}}_1  \lambda^{-\frac{1}{2}}_{n+1}\widetilde E_2^2+\lambda^{-1}_{n+1}\|\f(s)\|^2){\rm d} s
\\
&\le&\displaystyle \|\e_n(0)\|^2+\frac{C }{\nu^2}\lambda^{-\frac{3}{2}}_{n+1}\left\{\lambda^{-\frac{1}{2}}_1 \widetilde E_2^2+\lambda^{-\frac{1}{2}}_{n+1}\|\f\|^2_{L^\infty(0,T; L^2(\Omega))}\right\}.
\end{array}  
$$
Finally, from (\ref{errorH1H2yL2H2}), we have $\|\e_n(0)\|^2\le C \lambda_{n+1}^{-2} \|A\u_0\|^2$. Hence, we see that
\begin{equation}\label{lm13-lab3}
\begin{array}{rcl}
\|\e_n(t)\|^2&\le&\displaystyle C \left\{\lambda_{n+1}^{-\frac{1}{2}}\|A\u_0\|^2+\frac{1}{\nu^2}(\lambda^{-\frac{1}{2}}_1 \widetilde E_2^2+\lambda^{-\frac{1}{2}}_{n+1}\|\f\|^2_{L^\infty(0,T; L^2(\Omega))})\right\}\lambda_{n+1}^{-\frac{3}{2}}
\\
&:=&
K_1\lambda_{n+1}^{-\frac{3}{2}}.
\end{array}
\end{equation}
\end{proof}

For the error $\z_n^\alpha$, we start by writing its own equation. In order to do this, we first apply the operator $(I+\alpha^2 A)^{-1}$ to $(\ref{Galerkin})_1$ to obtain 
\begin{equation}\label{th1-lab1}
\begin{array}{rcl}
\displaystyle
\frac{d\u_n^\alpha}{dt}+\nu A\u^\alpha_n&=&-(I+\alpha^2 A)^{-1} P_n (B(\u_n^\alpha, \v_n^\alpha )+ B^\star (\u_n^\alpha, \v_n^\alpha) )
\\
&&+(I+\alpha^2 A)^{-1}P_n\f,
\end{array}
\end{equation}
where we have used the relation (\ref{rel:tB-B}).

Next, observe that $\boldsymbol\eta_n=P_n\u$  satisfies  
\begin{equation}\label{th1-lab2}
\frac{d}{dt}\boldsymbol\eta_n+\nu A\boldsymbol\eta_n=-P_nB(\u, \u)+P_n\f.
\end{equation}
This is readily seen by applying the the finite-dimensional Helmholtz-Leray operator $P_n$ to (\ref{NS}). Subtracting (\ref{th1-lab2}) from (\ref{th1-lab1}) gives
\begin{equation}\label{eq:z}
	\begin{array}{rcl}
	\displaystyle
	\frac{d\z^\alpha_n}{dt}+\nu A\z^\alpha_n&=& P_n B(\u, \u)-(I+\alpha^2 A)^{-1} P_n  B (\u^\alpha_n, \v^\alpha_n)
\\
	&&+(I+\alpha^2 A)^{-1} P_nB^\star(\u_n^\alpha,\v^\alpha_n)
\\
	&&+((I+\alpha^2 A)^{-1}-I)P_n\f.
	\end{array}
\end{equation}
Splitting the right-hand side of (\ref{eq:z})  appropriately as 
$$
\begin{array}{rcl}
P_n B(\u, \u)&=&P_n B(\u, \u)\pm P_n(\u_n^\alpha, \u_n^\alpha)
\\
&=&P_n B(\e_n-\z_n^\alpha, \u)+P_n B(\u_n^\alpha, \e_n-\z_n^\alpha)+P_n B(\u_n^\alpha, \u_n^\alpha)
\\
&=&-P_nB(\u, \z^\alpha_n)-P_nB(\z^\alpha_n, \u)-P_nB(\z^\alpha_n,\z^\alpha_n)
\\
&&+P_nB(\z^\alpha_n, \e_n)+P_nB(\e_n,\z^\alpha_n)+P_nB(\u, \e_n)
\\
&& +P_nB(\e_n, \boldsymbol\eta_n)+P_n B(\u_n^\alpha, \u_n^\alpha)
,
\end{array}
$$
we obtain 
\begin{equation}\label{eq:z_II}
\begin{array}{rcl}
\displaystyle
\frac{d\z^\alpha_n}{dt}+\nu A\z^\alpha_n&=&-P_nB(\u, \z^\alpha_n)-P_nB(\z^\alpha_n, \u)-P_nB(\z^\alpha_n,\z^\alpha_n)
\\
&&+P_nB(\z^\alpha_n, \e_n)+P_nB(\e_n,\z^\alpha_n)
\\
&&+P_nB(\u, \e_n) +P_nB(\e_n, \boldsymbol\eta_n)
\\
&&+(I+\alpha^2 A )^{-1}P_n (B(\u^\alpha_n, \u^\alpha_n)-B(\u^\alpha_n,\v^\alpha_n))
\\
&&-((I+\alpha^2 A)^{-1}-I) P_nB(\u^\alpha_n, \u^\alpha_n)
\\
&&+(I+\alpha^2 A)^{-1} P_nB^\star(\u_n^\alpha, \v^\alpha_n)
\\
&&+((I+\alpha^2 A)^{-1}-I)P_n\f.
\end{array}
\end{equation}

Now we are prepared to prove the local-in-time error estimate announced in Theorem \ref{Th1}. 
Taking the $\L^2(\Omega)$ inner product of (\ref{eq:z_II}) with $\z_n^\alpha$, we get
\begin{equation}\label{th1-lab3}
\begin{array}{rcl}
\displaystyle
\frac{1}{2}\frac{d}{dt} \|\z_n^\alpha\|^2&+&\nu\|A^{\frac{1}{2}} \z_n^\alpha\|^2
= -(B(\u, \z^\alpha_n), \z^\alpha_n) 
\\
&-&(B(\z_n^\alpha, \u), \z^\alpha_n)-(B(\z^\alpha_n, \z^\alpha_h), \z^\alpha_n)+(B(\z^\alpha_n, \e_n), \z^\alpha_n)
\\
&-&( B(\e_n, \z^\alpha_n), \z_n^\alpha)+(B(\u, \e_n), \z^\alpha_n)-( B(\e_n, \boldsymbol\eta_n), \z_n^\alpha)
\\
&+&((I+\alpha^2 A )^{-1}P_n (B(\u^\alpha_n, \u^\alpha_n)-B(\u^\alpha_n,\v^\alpha_n)), \z_n^\alpha)
\\
&-&((I+\alpha^2 A)^{-1}-I) P_n B(\u^\alpha_n, \u^\alpha_n), \z^\alpha_n)
\\
&+&((I+\alpha^2 A)^{-1} P_n B^\star(\u_n^\alpha, (I+\alpha^2 A)\u^\alpha_n), \z^\alpha_n)
\\
&+&(((I+\alpha^2 A)^{-1}-I)P_n\f, \z^\alpha_n)
\\
&:=&\displaystyle\sum_{i=1}^{11}J_i.
\end{array}
\end{equation}

The right-hand side of (\ref{th1-lab3}) will be handled separately. It is clear that $J_i=0$, for $i=1, 3, 5$, from (\ref{Skew_Symmetric_B-bis}). Let $\varepsilon$ be a positive constant (to be adjusted below). The skew-symmetric propierty  (\ref{Skew_Symmetric_B}) of $B$ combined with (\ref{BL2L2Linf}) and (\ref{Poincare}) gives   
$$
\begin{array}{rcl}
J_2&=&(B(\z_n^\alpha, \z^\alpha_n), \u )\le C \|\z^\alpha_n\| \|A^{\frac{1}{2}}\z_n^\alpha\| \|\u\|^{\frac{1}{2}} \|A\u\|^{\frac{1}{2}}
\\
&\le&\displaystyle \frac{C_\varepsilon}{\nu} \|\u\| \|A\u\|  \|\z^\alpha_n\|^2+\nu \varepsilon \|A^{\frac{1}{2}}\z_n^\alpha\|^2
\\
&\le&\displaystyle \frac{C_\varepsilon}{\nu} \lambda_1^{-1}\|A\u\|^2  \|\z^\alpha_n\|^2+\nu \varepsilon \|A^{\frac{1}{2}}\z_n^\alpha\|^2
\end{array}
$$ 
and
$$
\begin{array}{rcl}
J_4&=&-(B(\z_n^\alpha, \z^\alpha_n), \e_n )\le \|\z^\alpha_n\| \|A^{\frac{1}{2}}\z_n^\alpha\| \|\e_n\|^{\frac{1}{2}} \|A\e_n\|^{\frac{1}{2}}
\\
&\le&\displaystyle \frac{C_\varepsilon}{\nu}  \|\u\| \|A\u\|  \|\z^\alpha_n\|^2+\nu \varepsilon \|A^{\frac{1}{2}}\z_n^\alpha\|^2
\\
&\le&\displaystyle \frac{C_\varepsilon}{\nu} \lambda_1^{-1}\|A\u\|^2  \|\z^\alpha_n\|^2+\nu \varepsilon \|A^{\frac{1}{2}}\z_n^\alpha\|^2,
\end{array}
$$
where we have also used (\ref{stabL2}) and (\ref{stabH2}) for bounding $\|\e_n\|\le 2 \|\u\|$ and $ \|A\e_n\|\le 2 \|A\u\| $ in $J_4$. Now, combing successively  (\ref{Skew_Symmetric_B}), (\ref{BL4L2L4}),   (\ref{errorH1H2yL2H2}), (\ref{stabH2}), and  (\ref{Poincare}), we get 
$$
\begin{array}{rcl}
J_6&=&-(B(\u,\z_n^\alpha), \e_n)\le C \|\u\|^{\frac{1}{2}}  \|A^{\frac{1}{2}}\u\|^{\frac{1}{2}}   \|A^{\frac{1}{2}}\z_n^\alpha\| \|\e_n\|^{\frac{1}{2}} \|A^{\frac{1}{2}}\e_n\|^{\frac{1}{2}} 
\\
&\le& C \|\u\|^{\frac{1}{2}}  \|A^{\frac{1}{2}}\u\|^{\frac{1}{2}}  \|A^{\frac{1}{2}}\z_n^\alpha\| \lambda^{-\frac{3}{4}}_{n+1} \|A\u\| 
\\
&\le& C \lambda_1^{-\frac{1}{4}}  \|A^{\frac{1}{2}}\u\|  \|A^{\frac{1}{2}}\z_n^\alpha\| \lambda^{-\frac{3}{4}}_{n+1} \|A\u\| 
\\
&\le&\displaystyle \frac{C_\varepsilon}{\nu}  \widetilde E_2 \lambda_1^{-\frac{1}{2}}\lambda^{-\frac{3}{2}}_{n+1}  \|A\u\|^2 +\varepsilon \nu\|A^{\frac{1}{2}}\z_n^\alpha\|^2.
\end{array}
$$
Analogous to $J_6$, we have that $J_7$ can be estimated as:   
$$
\begin{array}{rcl}
J_7&=&-( B(\e_n, \z_n^\alpha), \boldsymbol\eta_n)\le C \|\e_n\|^{\frac{1}{2}} \|A^\frac{1}{2}\e_n\|^{\frac{1}{2}} \|A^{\frac{1}{2}}\z^\alpha_n\| \|\boldsymbol\eta_n\|^{\frac{1}{2}} \|A^{\frac{1}{2}}\boldsymbol\eta_n\|^{\frac{1}{2}}
\\
&\le&\displaystyle \frac{C_\varepsilon}{\nu}  \widetilde E_2 \lambda_1^{-\frac{1}{2}}\lambda^{-\frac{3}{2}}_{n+1}  \|A\u\|^2 +\varepsilon \nu\|A^{\frac{1}{2}}\z_n^\alpha\|^2,
\end{array}
$$
where we have also used (\ref{stabL2}) and (\ref{stabH1}) for bounding $\|\boldsymbol\eta_n\|\le  \|\u\|$ and $ \|A^{\frac{1}{2}}\boldsymbol\eta_n\|\le  \|A^{\frac{1}{2}}\u\| $.
From the fact that $(I+\alpha^2 A)^{-1}$ is a self-adjoint operator and in view of the definition of $\v^\alpha_n=(I+\alpha^2 A) \u^\alpha_n$, we write
$$
\begin{array}{c}
J_8
=-\alpha^2(B(\u^\alpha_n,A\u_n^\alpha), (I+\alpha^2 A)^{-1} \z_n^\alpha)
\\
=\alpha^2 (B(\u_n^\alpha, (I+\alpha^2 A)^{-1}\z^\alpha_n), A\u^\alpha_n),
\end{array}
$$
where in the last line we have utilized (\ref{Skew_Symmetric_B}).
Thus, in virtue of (\ref{BLinfL2L2}), (\ref{ResolventH1}), and (\ref{Poincare}), we get 
$$
\begin{array}{rcl}
J_8&\le& C  \alpha^2  \|\u^\alpha_n\|^{\frac{1}{2}} \|A\u^\alpha_n\|^{\frac{1}{2}} \| A^{\frac{1}{2}} (I+\alpha^2 A)^{-1}\z^\alpha_n\|\|A\u^\alpha_n\|
\\
&\le& C  \alpha^2  \|\u^\alpha_n\|^{\frac{1}{2}} \|A\u^\alpha_n\|^{\frac{3}{2}} \|A^{\frac{1}{2}}\z^\alpha_n\|
\\
&\le&\displaystyle \frac{C_\varepsilon}{\nu} E_2 \lambda_1^{-\frac{1}{2}} \alpha^3   \|A\u^\alpha_n\|^2+\varepsilon\nu \|A^{\frac{1}{2}}\z^\alpha_n\|^2.
\end{array}
$$
In order to estimate $J_9$, we use identity (\ref{I-Ja=aAJa}) to obtain 
$$
\begin{array}{rcl}
J_9&=&\alpha^2 (A (I+\alpha^2 A)^{-1} P_n B(\u^\alpha_n, \u^\alpha_n), \z^\alpha_n)
\\
&=&\alpha^2 ( B(\u^\alpha_n, \u^\alpha_n), P_n  (I+\alpha^2 A)^{-1} A \z^\alpha_n)
\\
&=&\alpha^2 (B(\u^\alpha_n, \u^\alpha_n),  (I+\alpha^2 A)^{-1} A \z^\alpha_n)
\\
&=&\alpha ((\alpha^2 A)^{\frac{1}{2}}(I+\alpha^2 A)^{-1} B(\u^\alpha_n, \u^\alpha_n),  A^{\frac{1}{2}}\z^\alpha_n).
\end{array}
$$
Observe that we have applied that $A(I+\alpha^2 A)^{-1}$ is an adjoint operator and  neglected $P_n$ since $(I+\alpha^2 A)^{-1} A\z_n^\alpha$ belongs to $\V_n$.
Now, from (\ref{Resolvent-alphaH1}) and (\ref{Poincare}), we have 
$$
\begin{array}{rcl}
J_9&\le&\alpha\|(\alpha^2 A)^{\frac{1}{2}}(I+\alpha A)^{-1}B(\u^\alpha_n, \u^\alpha_n)\| \|A^{\frac{1}{2}} \z^\alpha_n\| 
\\
&\le&\alpha \|B(\u^\alpha_n, \u^\alpha_n)\|  \|A^{\frac{1}{2}} \z^\alpha_n\|
\\
&\le&\alpha \|\u^\alpha_n\|_{L^\infty(\Omega)} \|A^{\frac{1}{2}}\u^\alpha_n\|  \|A^{\frac{1}{2}} \z^\alpha_n\|
\\
&\le&\alpha \|\u^\alpha_n\|^{\frac{1}{2}} \|A\u^\alpha_n\|^\frac{1}{2}\|A^\frac{1}{2}\u^\alpha_n\| \|A^{\frac{1}{2}}\z^\alpha_n\|
\\
&\le& \displaystyle  \frac{C_\varepsilon}{\nu} E_2 \lambda_1^{-1} \alpha^2 \|A\u^\alpha_n\|^2 +\varepsilon\nu\|A^{\frac{1}{2}}\z^\alpha_n\|^2.
\end{array}
$$
It follows from (\ref{rel:B_starB}) and (\ref{Skew_Symmetric_B-bis})  that 
$$
\begin{array}{rcl}
J_{10}&=&( (B^\star(\u_n^\alpha, (I+\alpha^2 A)\u^\alpha_n))- B^\star(\u_n^\alpha, \u^\alpha_n)), (I+\alpha^2 A)^{-1} \z^\alpha_n)
\\
&=& \alpha^2 (  B^\star(\u_n^\alpha,  A\u^\alpha_n), (I+\alpha^2 A)^{-1} \z^\alpha_n)
\\
&=&\alpha^2 ( B((I+\alpha^2 A)^{-1}\z^\alpha_n,  A\u^\alpha_n), \u_n^\alpha)
\\
&=&-\alpha^2 ( B((I+\alpha^2 A)^{-1} \z_n^\alpha, \u^\alpha_n),  A\u^\alpha_n).
\end{array}
$$
Next, thanks to (\ref{BL4L4L2}), (\ref{ResolventL2}),  (\ref{ResolventH1}) and (\ref{Poincare}), we find that 
$$
\begin{array}{rcl}
J_{10}&\le& \alpha^2 \|(I+\alpha^2 A)^{-1}\z_n^\alpha\|^{\frac{1}{2}} \|A^{\frac{1}{2}}(I+\alpha^2 A)^{-1}\z_n^\alpha\|^{\frac{1}{2}} \|A^{\frac{1}{2}}\u^\alpha_n\|^{\frac{1}{2}} \|A\u^\alpha_n\|^{\frac{3}{2}}
\\
&\le& \displaystyle \frac{C_\varepsilon}{\nu} E_2 \lambda_1^{-\frac{1}{2}} \alpha^3 \|A\u^\alpha_n\|^2+\varepsilon \nu\|A^{\frac{1}{2}} \z_n^\alpha\|^2.
\end{array}
$$
It is readily to bound $J_{11}$ as 
$$
\begin{array}{rcl}
J_{11}&=&\alpha^2(A(I+\alpha^2 A)^{-1} P_n\f, \z^\alpha_n)=\alpha((\alpha^2A)^{\frac{1}{2}}(I+\alpha^2 A)^{-1} P_n\f, A^{\frac{1}{2}} \z^\alpha_n)
\\
&\le &\alpha\|P_n\f\| \| A^{\frac{1}{2}} \z^\alpha_n\|\le\displaystyle\frac{C_\varepsilon}{\nu} \alpha^2 \|\f\|^2+\varepsilon\nu \|A^{\frac{1}{2}}\z^\alpha_n\|^2.
\end{array}
$$
%
Collecting all the above estimates and choosing $\varepsilon$ appropriately, we have
$$
\begin{array}{rcl}
\displaystyle
\frac{d}{dt} \|\z_n^\alpha\|^2+\nu\|A^{\frac{1}{2}} \z_n^\alpha\|^2&\le&
\displaystyle\frac{C}{\nu} \lambda_1^{-1}\|A\u\|^2  \|\z^\alpha_n\|^2+\frac{C}{\nu}  \widetilde E_2 \lambda_1^{-\frac{1}{2}}\lambda^{-\frac{3}{2}}_{n+1}  \|A\u\|^2
\\
&&\displaystyle + \frac{C}{\nu} E_2 \lambda_1^{-\frac{1}{2}} \alpha^3   \|A\u^\alpha_n\|^2+ \frac{C}{\nu} E_2 \lambda_1^{-1} \alpha^2 \|A\u^\alpha_n\|^2
\\
&&\displaystyle +\frac{C}{\nu} \alpha^2 \|\f\|^2.
\end{array}
$$
Equivalently, 
\begin{equation}\label{th1-lab4}
\begin{array}{rcl}
\displaystyle
\frac{d}{dt} \|\z_n^\alpha\|^2+\nu\|A^{\frac{1}{2}} \z_n^\alpha\|^2&\le& \displaystyle\frac{C}{\nu} \lambda_1^{-1}\|A\u\|^2  \|\z^\alpha_n\|^2+\frac{C}{\nu} (\lambda_1^{-\frac{1}{2}}\alpha^2+\lambda_{n+1}^{-\frac{3}{2}}) \left[  \lambda_1^{-\frac{1}{2}}  \widetilde E_{2}  \|A\u\|^2\right.
\\
&&\left.+E_{2}  (\lambda_1^{-\frac{1}{2}}+\alpha)\|A \u^\alpha_n\|^2+\lambda_1^{\frac{1}{2}}\|\f\|^2\right]
\end{array}
\end{equation}


Applying Grönwall's inequality  yields

\begin{align*}
\|\z^\alpha_n(t)\|^2+&\nu \int_0^t\|A^{\frac{1}{2}} \z^\alpha_n(s)\|^2{\rm d} s \le \frac{C}{\nu} e^{\frac{C}{\nu^2} \lambda_1^{-1} E_2} (\lambda_1^{-\frac{1}{2}}\alpha^2+\lambda_{n+1}^{-\frac{3}{2}})\times
\\
&\times\left[\lambda_1^{-\frac{1}{2}}  \widetilde E_{2}  \int_0^T\|A\u(s)\|^2 {\rm d}s+ E_2(\lambda_1^{-\frac{1}{2}}+\alpha)\int_0^T\|A \u^\alpha_n(s)\|^2{\rm d} s +\lambda_1^{\frac{1}{2}}\int_0^T\|\f(s)\|^2 {\rm d}s\right]  
\\
&\displaystyle
:=K_2(\lambda^{-\frac{1}{2}}_1\alpha^2+\lambda_{n+1}^{-\frac{3}{2}}) ,
\end{align*}
where we have used the fact that $\z_n^\alpha(0)=\boldsymbol{0}$. To conclude the proof of Theorem \ref{Th1}, we combine the above estimate and (\ref{error-e}) with the triangle inequality and choose $K=\max\{K_1, K_2\}$.
\section{Global-in-time error estimates}

Without further assumptions on the solution $\u$ to the Navier-Stokes equations (\ref{NS}), global-in-time error estimates cannot be asserted. Therefore, to go further, we need to introduce the concept of the $\L^2(\Omega)$ stability for solutions of the Navier-Stokes equations. This stability condition deals with the behavior of perturbations of $\u$; namely, the difference between neighboring solutions must decay as time goes to infinity. Once we know that the solution $\u$ is stable in the sense of the $\L^2(\Omega)$ norm, we will be able to obtain global-in-time estimates for the error $\u-\u^\alpha_n$ in the $L^\infty(0,\infty; \H)$ norm concerning the regularization parameter $\alpha$ and the eigenvalue $\lambda_{n+1}$ of the Stokes operator $A$. In doing so, we will first prove global-in-time a priori energy estimates. 

\subsection{Global a priori energy estimates}
\begin{lemma}[First energy estimates for $\u^\alpha_n$] Let $T=\infty$. There exists a positive constant $E_{1,\infty}=E_{1,\infty}(\u_0,\f, \nu, T, \Omega,\alpha)$ such that the Galerkin approximation $\u^\alpha_n$ defined by problem (\ref{Galerkin}) satisfies
\begin{equation}\label{first-energy-infty}
\sup_{0\le t < \infty}\left[ \|\u^\alpha_n(t)\|^2+\alpha^2\|A^{\frac{1}{2}}\u^\alpha_n(t)\|^2\right]\le E_{1,\infty}.
\end{equation}
Furthermore, we have, for $0\le t_0\le t$, 
\begin{equation}\label{first-energy-infty2}
\nu\int_{t_0}^t(\|A^{\frac{1}{2}}\u^\alpha_n(s)\|^2+\alpha^2\|A\u^\alpha_n(s)\|^2){\rm d} s\le E_{1,\infty}(1+\nu\lambda_1(t-t_0)).
\end{equation}
\end{lemma}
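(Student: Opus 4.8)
The plan is to mimic the proof of the local first energy estimate (\ref{first-energy}), but to replace the naive integration over $(0,t)$—which produces a bound that grows with $T$—by an absorbing-ball argument driven by Poincar\'e's inequality (\ref{Poincare}). As a starting point I would take the $\L^2(\Omega)$ inner product of $(\ref{Galerkin})_1$ with $\u^\alpha_n$ and invoke the skew-symmetry (\ref{Skew_Symmetric_tildeB}) to annihilate the term $P_n\widetilde B(\u^\alpha_n,\v^\alpha_n)$, arriving at the energy identity
\[
\tfrac{1}{2}\tfrac{d}{dt}\bigl(\|\u^\alpha_n\|^2+\alpha^2\|A^{\frac{1}{2}}\u^\alpha_n\|^2\bigr)+\nu\bigl(\|A^{\frac{1}{2}}\u^\alpha_n\|^2+\alpha^2\|A\u^\alpha_n\|^2\bigr)=(\f,\u^\alpha_n).
\]
Throughout I write $y(t):=\|\u^\alpha_n(t)\|^2+\alpha^2\|A^{\frac{1}{2}}\u^\alpha_n(t)\|^2$ for the quantity to be controlled, and I note the trivial inequality $\|\u^\alpha_n\|^2\le y$.

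For the uniform bound (\ref{first-energy-infty}) I would estimate the forcing by Schwarz, Poincar\'e (\ref{Poincare}) and Young so as to absorb only \emph{half} of the gradient dissipation, $(\f,\u^\alpha_n)\le \tfrac{1}{2\nu\lambda_1}\|\f\|^2+\tfrac{\nu}{2}\|A^{\frac{1}{2}}\u^\alpha_n\|^2$. Substituting and applying Poincar\'e a second time to the \emph{remaining} dissipation, namely $\tfrac{\nu}{2}\|A^{\frac{1}{2}}\u^\alpha_n\|^2+\nu\alpha^2\|A\u^\alpha_n\|^2\ge \tfrac{\nu\lambda_1}{2}\,y$, converts the energy balance into the genuine decay inequality
\[
\tfrac{d}{dt}y+\nu\lambda_1\,y\le \tfrac{1}{\nu\lambda_1}\,\|\f\|^2_{L^\infty(0,\infty;\L^2(\Omega))}.
\]
Multiplying by the integrating factor $e^{\nu\lambda_1 t}$ and discarding the decaying contributions yields $y(t)\le y(0)+(\nu\lambda_1)^{-2}\|\f\|^2_{L^\infty(0,\infty;\L^2(\Omega))}$; since $y(0)\le \|\u_0\|^2+\alpha^2\|A^{\frac{1}{2}}\u_0\|^2$ by (\ref{stabL2}) and (\ref{stabH1}), this defines $E_{1,\infty}$ and proves (\ref{first-energy-infty}).

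For the integral bound (\ref{first-energy-infty2}) I would instead integrate the energy identity over $(t_0,t)$ while keeping the \emph{full} dissipation on the left, so that $\nu\int_{t_0}^t(\|A^{\frac{1}{2}}\u^\alpha_n\|^2+\alpha^2\|A\u^\alpha_n\|^2)\,{\rm d}s=\tfrac12 y(t_0)-\tfrac12 y(t)+\int_{t_0}^t(\f,\u^\alpha_n)\,{\rm d}s$. Dropping $-\tfrac12 y(t)\le 0$, using $\tfrac12 y(t_0)\le \tfrac12 E_{1,\infty}$, and estimating the forcing integral by Young as $\tfrac{1}{2\nu\lambda_1}\|\f\|^2+\tfrac{\nu\lambda_1}{2}\|\u^\alpha_n\|^2$ lets me invoke the just-proved bound $\|\u^\alpha_n\|^2\le E_{1,\infty}$ together with the observation that $\tfrac{1}{2\nu\lambda_1}\|\f\|^2_{L^\infty}\le \tfrac{\nu\lambda_1}{2}E_{1,\infty}$, which is built into the definition of $E_{1,\infty}$. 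Both forcing contributions then amount to $\tfrac{\nu\lambda_1}{2}E_{1,\infty}(t-t_0)$ apiece, producing exactly $E_{1,\infty}\bigl(\tfrac12+\nu\lambda_1(t-t_0)\bigr)\le E_{1,\infty}\bigl(1+\nu\lambda_1(t-t_0)\bigr)$.

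The only genuinely new ingredient relative to the local estimate is the Poincar\'e-driven coercivity $\text{dissipation}\ge \nu\lambda_1\,y$, which is what upgrades the energy balance into time-uniform control; the main bookkeeping obstacle is the linearly growing forcing term in the second estimate, whose constant must be calibrated against the $\|\f\|^2_{L^\infty}$ contribution hidden in $E_{1,\infty}$ in order to land precisely on the factor $1+\nu\lambda_1(t-t_0)$.
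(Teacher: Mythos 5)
Your proposal is correct and follows essentially the same route as the paper: the same energy identity with (\ref{Skew_Symmetric_tildeB}), the same Poincar\'e-driven decay inequality $\frac{d}{dt}y+\nu\lambda_1 y\le \frac{1}{\nu\lambda_1}\|\f\|^2$ with the integrating factor $e^{\nu\lambda_1 t}$, and the same calibration of the linearly growing forcing term against the $\frac{1}{\nu^2\lambda_1^2}\|\f\|^2_{L^\infty}$ part of $E_{1,\infty}$ to land on the factor $1+\nu\lambda_1(t-t_0)$. The only cosmetic difference is that in the second estimate the paper integrates the inequality with the forcing already absorbed into half the dissipation, whereas you integrate the raw identity and invoke the just-proved uniform bound; both are equivalent bookkeeping.
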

\begin{proof} To start with, we take advantage of (\ref{lm10-lab1})  to get 
\begin{equation}\label{lm14-lab1}
\frac{d }{d t}(\| \u_n^\alpha\|^2 + \alpha^2 \|A^{\frac{1}{2}}\u_n^\alpha\|^2)+\nu (\|A^{\frac{1}{2}} \u_n^\alpha\|^2+\alpha^2\|A\u^\alpha_n\|^2)  \le \frac{1}{\nu \lambda_1 } \|\f\|^2.
\end{equation}
By Poincaré's inequality (\ref{Poincare}), we find that
$$
\frac{d }{d t}(\| \u_n^\alpha\|^2 + \alpha^2 \|\u_n^\alpha\|^2)+\nu \lambda_1( \|\u_n^\alpha\|^2 +\|A^{\frac{1}{2}}\u^\alpha_n\|^2) \le \frac{1}{ \nu \lambda_1 } \|\f\|^2.
$$
Multiplying by $e^{\nu\lambda_1 t}$ gives 
$$
\frac{d }{d t}[e^{\nu\lambda_1 t}(\| \u_n^\alpha\|^2+\alpha^2\|A^{\frac{1}{2}}\u^\alpha_n\|^2)]  \le e^{\nu\lambda_1 t}  \frac{1}{\nu \lambda_1 } \|\f\|^2.
$$
Upon integration, we obtain
$$
\begin{array}{rcl}
\|\u_n^\alpha(t)\|^2+\alpha^2\|A^{\frac{1}{2}}\u^\alpha_n(t)\|^2 &\le&\displaystyle e^{-\nu\lambda_1 t}(\|\u_0\|^2+\alpha^2\|A^{\frac{1}{2}}\u_0\|^2)
\\
&&\displaystyle+\frac{1}{\nu \lambda_1} \|\f\|^2_{L^\infty(0,\infty; L^2(\Omega))} \int_0^t e^{-\nu\lambda_1 (t-s)}{\rm d} s
\\ 
 &\le&\displaystyle e^{-\nu\lambda_1 t}(\|\u_0\|^2+\alpha^2\|A^{\frac{1}{2}}\u_0\|^2)
 \\
 &&\displaystyle+ \frac{1}{\nu^2 \lambda_1^2} (1-e^{-\nu\lambda_1 t})\|\f\|^2_{L^\infty(0,\infty; L^2(\Omega))}.
 \end{array}
$$
Thus we have 
$$
 \|\u_n^\alpha(t)\|^2+\alpha^2\|A^{\frac{1}{2}}\u^\alpha_n(t)\|^2 \le\|\u_0\|^2+\alpha^2\|A^{\frac{1}{2}}\u_0\|^2+ \frac{1}{\nu^2 \lambda_1^2}\|\f\|^2_{L^\infty(0,\infty; L^2(\Omega))}:=E_{1,\infty}.
$$
It remains to prove (\ref{first-energy-infty2}). Let us integrate (\ref{lm14-lab1}) over $(t_0, t)$ to obtain
\begin{align*}
\|\u_n^\alpha(t)\|^2&+\alpha^2\|A^{\frac{1}{2}}\u_n^\alpha(t)\|^2+\nu\int_{t_0}^t(\|A^{\frac{1}{2}} \u_n^\alpha(s)\|^2+\alpha^2\|A\u^\alpha_n(s)\|^2){\rm d} s   
\\
&\le \|\u_n^\alpha(t_0)\|^2 + \alpha^2 \|A^{\frac{1}{2}}\u_n^\alpha(t_0)\|^2+ \frac{1}{\nu \lambda_1 }\int_{t_0}^t \|\f(s)\|^2{\rm d} s
\\
&\le E_{1,\infty}+\frac{1}{\nu \lambda_1}\|\f\|^2_{L^\infty(0,\infty;L^2(\Omega))}(t-t_0)
\\
&\le E_{1,\infty}(1+\nu\lambda_1(t-t_0)).
\end{align*}
Therefore, 
$$
\nu\int_{t_0}^t(\|A^{\frac{1}{2}}\u^\alpha_n(s)\|^2+\alpha^2\|A\u^\alpha_n(s)\|^2){\rm d} s\le E_{1,\infty}(1+\nu\lambda_1(t-t_0)).
$$
It completes the proof. 
\end{proof}

\begin{lemma} [Second energy estimates for $\u^\alpha_n$] Let $T=\infty$. There exists a positive constant $E_{2,\infty}=E_{2,\infty}(\u_0,\f, \nu, T, \Omega,\alpha)$ such that the Galerkin approximation $\u^\alpha_n$ defined by problem (\ref{Galerkin}) satisfies
\begin{equation}\label{second-energy-infty}
\sup_{0\le t<\infty}\left[ \|A^{\frac{1}{2}}\u_n^\alpha(t)\|^2 + \alpha \|A\u_n^\alpha(t)\|^2\right]\le E_{2,\infty}.
\end{equation}
Furthermore, we have, for all $0\le t_0\le t$,
\begin{equation}\label{second-energy-infty2}
\nu\int_{t_0}^t(\|A\u^\alpha_n(s)\|^2+\alpha^2\|A^{\frac{3}{2}}\u^\alpha_n(s)\|^2){\rm d} s\le E_{2,\infty}( 1+ E_{3,\infty}(t-t_0))+\frac{C}{\nu} \|\f\|^2_{L^\infty(0,\infty; L^2(\Omega))} (t-t_0),
\end{equation}
where $E_{3,\infty}=E_{3,\infty}(\u_0, \f, \nu, \Omega, \alpha)$.
\end{lemma}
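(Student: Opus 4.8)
The plan is to recycle the pointwise differential inequality already obtained in the local case and then replace the ordinary Gronwall step by the \emph{uniform Gronwall lemma}, which is the standard device that turns an estimate with exponential growth in $T$ into one that is genuinely uniform on $[0,\infty)$. Set $y(t)=\|A^{\frac{1}{2}}\u_n^\alpha(t)\|^2+\alpha^2\|A\u_n^\alpha(t)\|^2$. The derivation leading to (\ref{lm11-lab2}) never used finiteness of $T$; repeating it verbatim but invoking the global bound $\|\u_n^\alpha(t)\|^2\le E_{1,\infty}$ from (\ref{first-energy-infty}) in place of $E_1$ yields, for every $t\ge 0$,
$$
\frac{dy}{dt}+\nu(\|A\u^\alpha_n\|^2+\alpha^2\|A^{\frac{3}{2}}\u^\alpha_n\|^2)\le g(t)\,y+h(t),
$$
with $g(t)=\tfrac{C}{\nu^3}E_{1,\infty}\|A^{\frac{1}{2}}\u^\alpha_n(t)\|^2$ and $h(t)=\tfrac{C}{\nu}\|\f(t)\|^2$.

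First I would verify the three integral hypotheses of the uniform Gronwall lemma on every window $[t,t+1]$, $t\ge 0$. From (\ref{first-energy-infty2}) with $t_0=t$ and length $r=1$ one has $\int_t^{t+1} y\,{\rm d}s\le \tfrac{E_{1,\infty}}{\nu}(1+\nu\lambda_1)$, and the same bound for $\int_t^{t+1}\|A^{\frac{1}{2}}\u^\alpha_n\|^2\,{\rm d}s$, so that $\int_t^{t+1} g\,{\rm d}s\le \tfrac{C}{\nu^4}E_{1,\infty}^2(1+\nu\lambda_1)=:a_1$ uniformly in $t$ and $n$; likewise $\int_t^{t+1} h\,{\rm d}s\le \tfrac{C}{\nu}\|\f\|^2_{L^\infty(0,\infty;L^2(\Omega))}=:a_2$ and $\int_t^{t+1} y\,{\rm d}s\le a_3$. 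The uniform Gronwall lemma then gives $y(t)\le (a_3+a_2)e^{a_1}$ for all $t\ge 1$, while on the compact interval $[0,1]$ the already established local estimate (\ref{second-energy}) with $T=1$ bounds $y$. Taking $E_{2,\infty}$ to be the maximum of the two constants delivers (\ref{second-energy-infty}).

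For the integrated bound (\ref{second-energy-infty2}) I would integrate the displayed differential inequality over $(t_0,t)$, discard the nonnegative term $y(t)$ on the left, and use $y(t_0)\le E_{2,\infty}$ together with $y(s)\le E_{2,\infty}$ inside $\int_{t_0}^t g\,y\,{\rm d}s$. Bounding $\int_{t_0}^t g\,{\rm d}s$ once more by (\ref{first-energy-infty2}) converts the nonlinear contribution into a constant multiple of $E_{2,\infty}$ plus a term growing linearly in $(t-t_0)$, while the forcing contributes $\tfrac{C}{\nu}\|\f\|^2_{L^\infty(0,\infty;L^2(\Omega))}(t-t_0)$; absorbing the $\nu$-, $\lambda_1$- and $E_{1,\infty}$-dependent factors into $E_{3,\infty}$ reproduces the stated right-hand side.

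The main obstacle is precisely the passage from local to global. An ordinary application of Gronwall's inequality, exactly as in the local second-energy lemma, produces a factor $e^{\frac{C}{\nu^4}E_{1,\infty}^2 T}$ that is useless as $T\to\infty$, because the coefficient $g$ is not globally integrable: its integral over $[0,T]$ grows linearly in $T$. The uniform Gronwall lemma circumvents this by exploiting that $g$, $h$ and $y$ are controlled only in the time-averaged sense over windows of fixed length, which is exactly what the dissipative estimate (\ref{first-energy-infty2}) supplies; checking that these window averages are genuinely uniform in both $t$ and the discretization index $n$ is the step that requires the most care.
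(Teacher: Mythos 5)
Your proposal is correct and follows essentially the same route as the paper: the paper's argument (apply Gronwall's inequality on $[s,t]$ for $s\in[t-t^*,t]$, then average over $s$ using the window-integral bound (\ref{first-energy-infty2}), and cover the initial interval by the local estimate (\ref{second-energy})) is exactly the proof of the uniform Gronwall lemma you invoke, with the same hypotheses checked from the same ingredients. The integrated bound (\ref{second-energy-infty2}) is also obtained the same way in both, up to an inessential difference in how the nonlinear term $\int g\,y$ is bounded (the paper uses the pointwise bound $\|A^{\frac{1}{2}}\u_n^\alpha\|^2\le E_{2,\infty}$ twice, you use (\ref{first-energy-infty2}) once more), which only changes the explicit form of $E_{3,\infty}$.
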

\begin{proof} Firstly, we must drop the term $\|A\u_n^\alpha\|^2+\alpha^2 \|A^{\frac{3}{2}}\u^\alpha_n\|^2$ from (\ref{lm11-lab2}), with $E_{1,\infty}$ instead of $E_1$. Secondly, we apply Grönwall's inequality to it, for $ t -t^*\le s\le t$, with $t^*<t$ fixed, to find 
\begin{align*}
\displaystyle
\|A^{\frac{1}{2}}\u_n^\alpha(t)\|^2&+\alpha^2\|A\u^\alpha_n(t)\|^2 \le e^{\frac{C}{\nu^4} E_{1,\infty}^2 (1+\nu\lambda_1 t^*)} \times
\\
&\displaystyle\times\left\{\|A^{\frac{1}{2}}\u_n^\alpha(s)\|^2+\alpha^2\|A\u_n^\alpha(s)\|^2+ \frac{C}{\nu}\|\f\|^2_{L^\infty(0,\infty;L^2(\Omega)) }t^*\right\},
\end{align*}
where we have used (\ref{first-energy-infty2}). Finally, we integrate with respect to $s$, for $t-t^* \le s\le t$, to get
\begin{align*}\label{lm11-lab2}
\displaystyle
\|A^{\frac{1}{2}}\u_n^\alpha(t)\|^2&+\alpha^2\|A\u^\alpha_n(t)\|^2\le e^{\frac{C}{\nu^4} E_{1,\infty}^2 (1+\nu\lambda_1 t^*)}\times
\\
\displaystyle&\times\left\{\frac{1}{t^*}\int_{t-t^*}^{t}(\|A^{\frac{1}{2}}\u_n^\alpha(s)\|^2+\alpha^2\|A\u_n^\alpha(s)\|^2){\rm d} s+ \frac{C}{\nu}\|\f\|^2_{L^\infty(0,\infty;L^2(\Omega))} t^*\right\}
\\
&\le e^{\frac{C}{\nu^4} E_{1,\infty}^2 (1+\nu\lambda_1 t^*)} \left\{\frac{1}{t^*} E_{1,\infty}(1+\lambda\nu t^*)+\frac{C}{\nu}\|\f\|^2_{L^\infty(0,\infty;L^2(\Omega))} t^*\right\}:=E_{2,\infty},
\end{align*}
where we have again used (\ref{first-energy-infty2}). Therefore, we have that (\ref{second-energy-infty}) holds for $t>t^*$. To fill the gap for $[0,t^*]$, we take into account (\ref{second-energy}) and select $t^*$ small enough such that $E_{2}\le E_{2,\infty}$, which is, of course, always possible.

In order to obtain estimate (\ref{second-energy-infty2}), we integrate (\ref{lm11-lab2}) over $(t_0, t)$ and use (\ref{first-energy-infty}) and (\ref{second-energy-infty}). Thus, we get
$$
\begin{array}{rcl}
\displaystyle
\nu\int_{t_0}^t(\|A\u^\alpha_n(s)\|^2+\alpha^2\|A^{\frac{3}{2}}\u^\alpha_n(s)\|^2){\rm d}s 
&\le&\displaystyle \frac{C}{\nu^3} E_{1,\infty} E_{2, \infty}^2 (t-t_0)+\frac{C}{\nu} \|\f\|^2_{L^\infty(0,\infty; L^2(\Omega))} (t-t_0) + E_{2,\infty} .
\\
&\le&\displaystyle E_{2,\infty}( 1+ E_{3,\infty}(t-t_0))+\frac{C}{\nu} \|\f\|^2_{L^\infty(0,\infty; L^2(\Omega))} (t-t_0),
\end{array}
$$
where we have denoted 
$$
E_{3,\infty}:=\frac{C}{\nu^3} E_{1,\infty}E_{2,\infty}.
$$ 
\end{proof}
Using Lemma $4.1$ in \cite{Braz-Rojas}, the following corollary is derived.
\begin{corollary} Let $T=\infty$. There exists a constant $ E_{4,\infty}= E_{4,\infty}(\u_0,\f, \nu, T, \Omega,\alpha)$ such that the Galerkin approximation $\u^\alpha_n$ defined by problem  (\ref{Galerkin}) satisfies
$$
e^{-t}\int_{t_0}^t e^{s}\|A\u^\alpha_n(s)\|^2{\rm d} s\le E_{4,\infty},
$$
for all $0\le t_0 \le t$.
\end{corollary}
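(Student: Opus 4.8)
The plan is to obtain the claimed uniform bound as a direct consequence of the integral estimate (\ref{second-energy-infty2}) combined with the abstract Lemma $4.1$ of \cite{Braz-Rojas}. That lemma asserts, roughly, that if a nonnegative function $\phi$ satisfies a linear-in-time integral bound of the form $\int_{t_0}^t\phi(s)\,{\rm d}s\le C_1+C_2(t-t_0)$ for \emph{every} pair $0\le t_0\le t$, then the exponentially weighted quantity $e^{-t}\int_{t_0}^t e^s\phi(s)\,{\rm d}s$ is controlled by the constant $C_1+C_2$, uniformly in $t_0$ and $t$. I would therefore set $\phi(s):=\|A\u^\alpha_n(s)\|^2$ and simply verify that $\phi$ meets this hypothesis.

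To this end, first I would take (\ref{second-energy-infty2}), discard the nonnegative contribution $\alpha^2\|A^{\frac{3}{2}}\u^\alpha_n\|^2$ from the left-hand side, and divide through by $\nu$, obtaining
$$
\int_{t_0}^t\|A\u^\alpha_n(s)\|^2\,{\rm d}s\le \frac{E_{2,\infty}}{\nu}+\left(\frac{E_{2,\infty}E_{3,\infty}}{\nu}+\frac{C}{\nu^2}\|\f\|^2_{L^\infty(0,\infty;L^2(\Omega))}\right)(t-t_0)
$$
for all $0\le t_0\le t$. This is precisely the linear-in-time bound required, with $C_1:=E_{2,\infty}/\nu$ and $C_2:=E_{2,\infty}E_{3,\infty}/\nu+C\nu^{-2}\|\f\|^2_{L^\infty(0,\infty;L^2(\Omega))}$; both constants depend only on the problem data, since $E_{2,\infty}$ and $E_{3,\infty}$ do.

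Applying Lemma $4.1$ of \cite{Braz-Rojas} to $\phi$ then yields $e^{-t}\int_{t_0}^t e^s\|A\u^\alpha_n(s)\|^2\,{\rm d}s\le C_1+C_2=:E_{4,\infty}$, which is the asserted estimate. The one point that deserves care — and the reason the corollary is not completely immediate — is that a bound on $\int_{t_0}^t\phi$ for a single fixed $t_0$ would only propagate, after integrating by parts against the weight $e^s$, to an estimate growing like $C_1+C_2(t-t_0)$, hence worthless as $t\to\infty$. What saves the day is that (\ref{second-energy-infty2}) holds with an \emph{arbitrary} lower endpoint $t_0$; feeding this whole family of bounds into the weighted integral lets the prefactor $e^{-t}$ absorb the linear growth exactly, producing a genuinely time-independent constant. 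Verifying that cancellation is the entire substance of Lemma $4.1$, so beyond the bookkeeping of constants there is no further obstacle.
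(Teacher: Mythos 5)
Your proposal is correct and follows essentially the same route as the paper, which likewise deduces the corollary by feeding the linear-in-time bound (\ref{second-energy-infty2}) into Lemma $4.1$ of \cite{Braz-Rojas}. The only caveat is cosmetic: that lemma yields a bound of the form $C(C_1+C_2)$ with a universal constant $C$ (coming from summing the exponential weight over unit subintervals), rather than exactly $C_1+C_2$, but this does not affect the statement since $E_{4,\infty}$ is only required to depend on the problem data.
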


Analogous to the case $0<T<\infty$, one can show that there exist a subsequence $\{\u^{\alpha_j}_{n_j}\}$ and a function $\u$ such that   
$$
\begin{array}{rcl}
\u^{\alpha_j}_{n_j}\to\u &\hbox{ weakly-$\star$ in }&  L^\infty_{\rm loc}(0,\infty; D(A^{\frac{1}{2}})),  
\\
\u^{\alpha_j}_{n_j} \to \u  &\hbox{ weakly in }&L^2_{\rm loc}(0,\infty; D(A)),
\end{array}
$$
and, by a compactness result of the Aubin-Lions type, such that
$$
\u^{\alpha_j}_{n_j} \to \u\quad \hbox{ strongly in }\quad  L^2_{\rm loc}(0,\infty; D(A^{\frac{1}{2}})),
$$
with $(\alpha_j, n_j)\to (0,\infty)$ as $j\to\infty$, where $\u$ is a strong solution of the Navier-Stokes equations.  

\begin{lemma}[Second energy estimates for $\u$] Let $T=\infty$. There exists a constant $\widetilde E_{2,\infty}= \widetilde E_{2,\infty}(\u_0,\f, \nu, T, \Omega)$, which is $E_{2,\infty}$ with $\alpha=0$, such that the unique solution $\u$ to problem (\ref{NS}) satisfies
\begin{equation}\label{second-energy-infty-u}
\sup_{0\le t< \infty}\|A^{\frac{1}{2}}\u(t)\|^2\le \widetilde E_{2,\infty}.
\end{equation}
Furthermore, we have, for all $0\le t_0\le t$,
\begin{equation}\label{second-energy-infty2-u}
\nu\int_{t_0}^t\|A\u(s)\|^2{\rm d} s\le \widetilde E_{2,\infty}( 1+ \widetilde E_{3,\infty}(t-t_0))+\frac{C}{\nu} \|\f\|^2_{L^\infty(0,\infty; L^2(\Omega))} (t-t_0),
\end{equation}
where $\widetilde E_{3,\infty}=\widetilde E_{3,\infty}(\u_0, \nu, \f, \Omega)$, which is $E_{3,\infty}$  with $\alpha=0$.
\end{lemma}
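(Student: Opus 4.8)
The plan is to pass the uniform-in-$\alpha$ bounds (\ref{second-energy-infty}) and (\ref{second-energy-infty2}) for the Galerkin approximations to the limit along the subsequence $\u^{\alpha_j}_{n_j}\to\u$ constructed just above, exploiting the weak lower semicontinuity of the relevant norms exactly as in the local-in-time case. First I would record that the constants $E_{1,\infty}$, $E_{2,\infty}$ and $E_{3,\infty}$ stay bounded as $\alpha\to0$: the parameter $\alpha$ enters them only through the data terms $\alpha^2\|A^{\frac{1}{2}}\u_0\|^2$ and $\alpha^2\|A\u_0\|^2$, which vanish, together with factors of $\alpha^2$ multiplying higher-order seminorms; hence the limits $\widetilde E_{2,\infty}:=\lim_{\alpha\to0}E_{2,\infty}$ and $\widetilde E_{3,\infty}:=\lim_{\alpha\to0}E_{3,\infty}$ exist, are finite, and coincide with $E_{2,\infty}$ and $E_{3,\infty}$ evaluated at $\alpha=0$.

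For the uniform bound (\ref{second-energy-infty-u}), fix an arbitrary $T\in(0,\infty)$. Since $\u^{\alpha_j}_{n_j}\to\u$ weakly-$\star$ in $L^\infty(0,T;D(A^{\frac{1}{2}}))$ and the nonnegative term $\alpha^2\|A\u^\alpha_n(t)\|^2$ in (\ref{second-energy-infty}) only improves the estimate, the weak-$\star$ lower semicontinuity of the $L^\infty(0,T;D(A^{\frac{1}{2}}))$ norm yields ${\rm ess\,sup}_{0\le t\le T}\|A^{\frac{1}{2}}\u(t)\|^2\le\liminf_j\sup_{0\le t\le T}\|A^{\frac{1}{2}}\u^{\alpha_j}_{n_j}(t)\|^2\le\widetilde E_{2,\infty}$. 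Because the right-hand side is independent of $T$, this holds on $[0,\infty)$; and since $\u$ is a strong solution, $A^{\frac{1}{2}}\u$ is weakly continuous in time, so the essential supremum may be replaced by the supremum, giving (\ref{second-energy-infty-u}).

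For the integral bound (\ref{second-energy-infty2-u}), fix $0\le t_0\le t$. The weak convergence $\u^{\alpha_j}_{n_j}\to\u$ in $L^2(t_0,t;D(A))$, which is the restriction of the weak convergence in $L^2_{\rm loc}(0,\infty;D(A))$ to the fixed interval $[t_0,t]$, together with the lower semicontinuity of the norm gives $\nu\int_{t_0}^t\|A\u(s)\|^2{\rm d}s\le\liminf_j\nu\int_{t_0}^t\|A\u^{\alpha_j}_{n_j}(s)\|^2{\rm d}s$. Discarding the nonnegative hyperviscous contribution $\alpha^2\|A^{\frac{3}{2}}\u^\alpha_n\|^2$ on the left of (\ref{second-energy-infty2}) and letting $\alpha_j\to0$ and $n_j\to\infty$, the right-hand side of (\ref{second-energy-infty2}) converges to $\widetilde E_{2,\infty}(1+\widetilde E_{3,\infty}(t-t_0))+\frac{C}{\nu}\|\f\|^2_{L^\infty(0,\infty;L^2(\Omega))}(t-t_0)$, which is precisely (\ref{second-energy-infty2-u}).

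The argument is structurally routine, and the only points requiring care are the uniform-in-$\alpha$ control of the constants, so that their $\alpha\to0$ limits are meaningful, and the passage from an essential-supremum bound to a genuine supremum bound, which rests on the weak time-continuity of the strong solution $\u$. I expect the former to be the main obstacle: one must check that every $\alpha$-dependent ingredient entering $E_{2,\infty}$ and $E_{3,\infty}$ (in particular the auxiliary parameter $t^{*}$ used to fill the interval $[0,t^{*}]$) can be fixed independently of $\alpha$, so that the $\alpha=0$ limits of the constants are well defined and the stated dependence $\widetilde E_{2,\infty}(\u_0,\f,\nu,T,\Omega)$ and $\widetilde E_{3,\infty}(\u_0,\nu,\f,\Omega)$ is genuinely recovered.
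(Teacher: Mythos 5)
Your proof is correct and takes essentially the same route as the paper, which states this lemma without a written proof and relies---exactly as it does explicitly in the local-in-time case---on passing the uniform-in-$\alpha$ Galerkin bounds (\ref{second-energy-infty}) and (\ref{second-energy-infty2}) to the limit along $\u^{\alpha_j}_{n_j}\to\u$ via weak and weak-$\star$ lower semicontinuity, with the constants $E_{2,\infty}$, $E_{3,\infty}$ evaluated at $\alpha=0$. Your extra attention to the uniform-in-$\alpha$ control of the constants (in particular that the auxiliary time $t^*$ can be fixed independently of $\alpha$) is a legitimate refinement of the same argument rather than a different approach.
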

Using Lemma $4.1$ in \cite{Braz-Rojas}, the following corollary is derived.
\begin{corollary} Let $T=\infty$. There exists a constant $\widetilde E_{4,\infty}=\widetilde E_{4,\infty}(\u_0,\f, \nu, T, \Omega)$ such that the unique solution $\u$ to problem (\ref{NS}) satisfies
$$
e^{-t}\int_{t_0}^t e^{s}\|A\u(s)\|^2{\rm d} s\le \widetilde E_{4,\infty},
$$
for all $0\le t_0 \le t$.
\end{corollary}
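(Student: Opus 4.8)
The plan is to obtain these two bounds for the limit solution $\u$ exactly as the corresponding finite-time second energy estimates were inherited: by passing to the limit in the uniform a priori bounds already established for the Galerkin approximations and invoking lower semicontinuity of the relevant norms. Recall that $\u$ is the strong solution produced as the limit of the subsequence $\{\u^{\alpha_j}_{n_j}\}$, with $(\alpha_j,n_j)\to(0,\infty)$, for which we have weak-$\star$ convergence in $L^\infty_{\rm loc}(0,\infty;D(A^{\frac{1}{2}}))$ and weak convergence in $L^2_{\rm loc}(0,\infty;D(A))$. The key observation is that the constants $E_{1,\infty}$, $E_{2,\infty}$ and $E_{3,\infty}$ entering (\ref{second-energy-infty}) and (\ref{second-energy-infty2}) are continuous functions of $\alpha$ that stay bounded as $\alpha\to0$; along the subsequence they converge to their $\alpha=0$ values $\widetilde E_{2,\infty}$ and $\widetilde E_{3,\infty}$, which is precisely the content of the statement.

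First I would establish (\ref{second-energy-infty-u}). Dropping the nonnegative term $\alpha_j^2\|A\u^{\alpha_j}_{n_j}(t)\|^2$ from (\ref{second-energy-infty}) yields $\sup_{0\le t<\infty}\|A^{\frac{1}{2}}\u^{\alpha_j}_{n_j}(t)\|^2\le E_{2,\infty}(\alpha_j)$ uniformly in $j$. Fixing any finite horizon $T$ and using the weak-$\star$ lower semicontinuity of the $L^\infty(0,T;D(A^{\frac{1}{2}}))$ norm under the convergence $\u^{\alpha_j}_{n_j}\to\u$, I obtain $\mathrm{ess\,sup}_{0\le t\le T}\|A^{\frac{1}{2}}\u(t)\|^2\le\liminf_j E_{2,\infty}(\alpha_j)=\widetilde E_{2,\infty}$. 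Since $T$ is arbitrary and $\u$ is (weakly) continuous into $D(A^{\frac{1}{2}})$, the essential supremum coincides with the supremum over $[0,\infty)$, giving (\ref{second-energy-infty-u}).

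For the dissipation bound (\ref{second-energy-infty2-u}) I would fix $0\le t_0\le t$ and again discard the nonnegative $\alpha_j^2\|A^{\frac{3}{2}}\u^{\alpha_j}_{n_j}\|^2$ contribution in (\ref{second-energy-infty2}), so that $\nu\int_{t_0}^t\|A\u^{\alpha_j}_{n_j}(s)\|^2\,{\rm d}s$ is bounded by the right-hand side with the $\alpha_j$-dependent constants. Because $\u^{\alpha_j}_{n_j}\to\u$ weakly in $L^2(t_0,t;D(A))$ on the compact interval $[t_0,t]$, weak lower semicontinuity of that norm gives $\nu\int_{t_0}^t\|A\u(s)\|^2\,{\rm d}s\le\liminf_j[\,\cdots\,]$, and sending $E_{2,\infty}(\alpha_j)\to\widetilde E_{2,\infty}$ and $E_{3,\infty}(\alpha_j)\to\widetilde E_{3,\infty}$ produces precisely (\ref{second-energy-infty2-u}). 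The only genuine obstacle is the bookkeeping needed to confirm that the $\alpha$-dependent constants converge to their $\alpha=0$ counterparts and remain uniformly bounded along the subsequence; this is immediate from the explicit expressions for $E_{1,\infty}$, $E_{2,\infty}$ and $E_{3,\infty}$, in which every $\alpha$-term enters multiplied by a nonnegative power of $\alpha$ and by data-controlled quantities such as $\|A\u_0\|^2$. Everything else reduces to the standard weak lower semicontinuity arguments already employed to transfer the finite-time second energy estimate to $\u$.
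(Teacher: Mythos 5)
Your proposal proves the wrong statement: what you establish are precisely the two assertions (\ref{second-energy-infty-u}) and (\ref{second-energy-infty2-u}) of the \emph{preceding lemma} (the second energy estimates for $\u$), obtained---as the paper implicitly does---by discarding the $\alpha$-weighted terms in the Galerkin bounds and invoking weak/weak-$\star$ lower semicontinuity along the subsequence $\u^{\alpha_j}_{n_j}\to\u$. The corollary in question asserts something further, namely a bound on the exponentially weighted dissipation integral
$$
e^{-t}\int_{t_0}^t e^{s}\|A\u(s)\|^2\,{\rm d} s\le \widetilde E_{4,\infty},
$$
with $\widetilde E_{4,\infty}$ independent of $t_0$ and $t$, and your argument never addresses this quantity at all. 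The omission is not cosmetic: the right-hand side of (\ref{second-energy-infty2-u}) grows linearly in $t-t_0$, so one cannot simply bound $e^{s-t}\le 1$ and quote (\ref{second-energy-infty2-u}); the entire point of the corollary is that the exponential weight converts that linear growth into a constant uniform over all $0\le t_0\le t$.

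The missing step---which is the whole content of the corollary once the lemma is available---is exactly what the paper delegates to Lemma $4.1$ of \cite{Braz-Rojas}: if a nonnegative function $h$ satisfies $\int_{t_0}^t h(s)\,{\rm d} s\le a+b(t-t_0)$ for all $0\le t_0\le t$, then $e^{-t}\int_{t_0}^t e^{s}h(s)\,{\rm d} s$ is bounded by a constant depending only on $a$ and $b$. The proof is short: split $[t_0,t]$ into subintervals of length one, so that on the $k$-th subinterval (counted from $t$ backwards) the weight satisfies $e^{s-t}\le e^{-k}$ while $\int h\le a+b$ there by the hypothesis, and sum the geometric series to obtain the bound $\frac{e}{e-1}(a+b)$. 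Applying this with $h=\|A\u\|^2$, $a=\widetilde E_{2,\infty}/\nu$, and $b=\bigl(\widetilde E_{2,\infty}\widetilde E_{3,\infty}+C\nu^{-1}\|\f\|^2_{L^\infty(0,\infty;L^2(\Omega))}\bigr)/\nu$ produces $\widetilde E_{4,\infty}$. Either cite that lemma, as the paper does, or include this short argument; without it your proposal stops one step short of the stated result.
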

\subsection{Perturbations }

Let us introduce here the concept of the $\L^2(\Omega)$ stability of the solution $\u$ to the Navier-Stokes equations (\ref{NS}) analogous to that of \cite{Salvi}.        

\begin{definition}\label{def:perturbation}
A function $\boldsymbol\zeta$, defined for all $t\ge t_0$, is called a perturbation of $\u$ if $\u+\boldsymbol\zeta$ is a solution of
(\ref{NS}) with $\boldsymbol{\zeta}=\boldsymbol 0$ on $\partial \Omega$. That is, for a fixed $t_0\ge
0$,  $\boldsymbol{\zeta}$ is a solution of the problem
\begin{equation}\label{eq:perturbation}
\left\{
\begin{array}{rll}
\displaystyle
\frac{d}{dt}\boldsymbol{\zeta}+ \nu A\boldsymbol{\zeta} +B(\u,\boldsymbol{\zeta})+B(\boldsymbol{\zeta},\u)
+B(\boldsymbol{\zeta},\boldsymbol{\zeta})&=&\boldsymbol{0},
\\
\boldsymbol{\zeta}(t_0)&=&\boldsymbol{\zeta}_0,
\end{array}
\right.
\end{equation}
for all $t\ge t_0$.
\end{definition}
\begin{definition}\label{def:stability} A solution $\u$ to the Navier-Stokes equations (\ref{NS}) is said to be exponentially stable in the $\L^2(\Omega)$ norm if there exist two positive numbers $M$ and $B$ such that for every $t_0>0$ and every $\boldsymbol{\zeta}_0\in D(A^{\frac{1}{2}})$, the perturbation problem (\ref{eq:perturbation}) is uniquely solvable and its solution satisfies 
$$\|\boldsymbol{\zeta}(t)\|^2\le B\|\boldsymbol{\zeta}_0\|^2 e^{-M(t-t_0)},$$
for all $t\ge t_0$.
\end{definition}

The global-in-time existence and uniqueness of perturbations to (\ref{eq:perturbation}) can be established by energy methods from the theory of the Navier-Stokes equations. In particular, this is possible due to the fact that the strong solution $\u$ to the two-dimensional Navier-Stokes equations (\ref{NS}) exist globally. 

The following lemma shows that the strong solution $\u$ to the Navier-Stokes equations are exponentially stable in the sense of the $\L^2(\Omega)$ norm. The proof can be found in \cite[Theorem 2.1]{Heywood-Rannacher_1986}. 
\begin{lemma}\label{le:perturbation} There exist positive numbers $B$ and $M$ such that for every $\boldsymbol{\zeta}_0\in D(A^{\frac{1}{2}})$  and every $t_0\ge 0$, there exists a unique perturbation $\boldsymbol\zeta$ to problem (\ref{eq:perturbation}) satisfying
\begin{equation}\label{first-energy-z}
\|\boldsymbol{\zeta}(t)\|^2 \le B \| \boldsymbol{\zeta}_0\|^2e^{-M(t-t_0)}.
\end{equation}
for all $ t\ge t_0$.
Furthermore, we have
\begin{equation}\label{second-energy-z}
\|A^{\frac{1}{2}} \boldsymbol{\zeta}(t)\|^2\le B \| A^{\frac{1}{2}} \boldsymbol{\zeta}_0\|^2e^{-M(t-t_0)},
\end{equation}
for all $ t\ge t_0$.
\end{lemma}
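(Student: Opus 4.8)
The plan is to read (\ref{eq:perturbation}) as a Navier--Stokes system for $\boldsymbol\zeta$ linearised about the background flow $\u$, and to run two successive energy estimates — first in $\H$ and then in $D(A^{\frac{1}{2}})$ — whose exponential decay is driven by the uniform bounds (\ref{second-energy-infty-u}) on $\u$. Global existence and uniqueness is not the difficulty: since $\u$ and $\u+\boldsymbol\zeta$ are both strong solutions of the two-dimensional problem (\ref{NS}), their difference solves the well-posed system (\ref{eq:perturbation}), and the standard two-dimensional theory furnishes a unique global strong solution with $\boldsymbol\zeta\in L^\infty_{\rm loc}(t_0,\infty; D(A^{\frac{1}{2}}))$. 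I would therefore spend all the effort on the two decay bounds (\ref{first-energy-z}) and (\ref{second-energy-z}).

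For (\ref{first-energy-z}) I would take the $\L^2(\Omega)$ inner product of $(\ref{eq:perturbation})_1$ with $\boldsymbol\zeta$. The two terms $(B(\u,\boldsymbol\zeta),\boldsymbol\zeta)$ and $(B(\boldsymbol\zeta,\boldsymbol\zeta),\boldsymbol\zeta)$ drop out by (\ref{Skew_Symmetric_B-bis}), so only $(B(\boldsymbol\zeta,\u),\boldsymbol\zeta)$ survives. I would estimate it directly through (\ref{BL4L2L4}), which gives $(B(\boldsymbol\zeta,\u),\boldsymbol\zeta)\le C\|\boldsymbol\zeta\|\,\|A^{\frac{1}{2}}\boldsymbol\zeta\|\,\|A^{\frac{1}{2}}\u\|$ and needs only $\u\in D(A^{\frac{1}{2}})$; after Young's inequality and absorption of $\tfrac{\nu}{2}\|A^{\frac{1}{2}}\boldsymbol\zeta\|^2$ one reaches
\[
\frac{d}{dt}\|\boldsymbol\zeta\|^2+\nu\|A^{\frac{1}{2}}\boldsymbol\zeta\|^2\le \frac{C}{\nu}\|A^{\frac{1}{2}}\u\|^2\,\|\boldsymbol\zeta\|^2.
\]
Replacing $\|A^{\frac{1}{2}}\boldsymbol\zeta\|^2$ by $\lambda_1\|\boldsymbol\zeta\|^2$ through (\ref{Poincare}) turns this into $\frac{d}{dt}\|\boldsymbol\zeta\|^2\le(\frac{C}{\nu}\|A^{\frac{1}{2}}\u\|^2-\nu\lambda_1)\|\boldsymbol\zeta\|^2$, and integrating yields the exponential bound (\ref{first-energy-z}) with a definite rate $M$ as soon as the coefficient is negative in the time-averaged sense; the uniform bound (\ref{second-energy-infty-u}) on $\|A^{\frac{1}{2}}\u\|$ is exactly what is used to pin down $M$ and $B$.

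For (\ref{second-energy-z}) I would test $(\ref{eq:perturbation})_1$ against $A\boldsymbol\zeta$ and control the three trilinear terms $(B(\u,\boldsymbol\zeta),A\boldsymbol\zeta)$, $(B(\boldsymbol\zeta,\u),A\boldsymbol\zeta)$ and $(B(\boldsymbol\zeta,\boldsymbol\zeta),A\boldsymbol\zeta)$ by means of (\ref{BL4L4L2}) and (\ref{BLinfL2L2}), each time peeling off a power $\|A\boldsymbol\zeta\|^{\frac{3}{2}}$ and absorbing it into $\nu\|A\boldsymbol\zeta\|^2$ via Young. This produces a differential inequality of the shape $\frac{d}{dt}\|A^{\frac{1}{2}}\boldsymbol\zeta\|^2+\nu\|A\boldsymbol\zeta\|^2\le g(t)\|A^{\frac{1}{2}}\boldsymbol\zeta\|^2+h(t)\|\boldsymbol\zeta\|^2$, where $g$ and $h$ are assembled from the uniform bounds on $\u$ and on $\|\boldsymbol\zeta\|$. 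Since (\ref{first-energy-z}) already makes $\|\boldsymbol\zeta(s)\|^2$ integrable against the weight $e^{M(s-t_0)}$, multiplying the inequality by $e^{M(t-t_0)}$ and integrating — a weighted (uniform) Gr\"onwall argument — transports the exponential rate to the $D(A^{\frac{1}{2}})$ norm and gives (\ref{second-energy-z}).

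The genuine obstacle is the sign question in the first step. The destabilising term $(B(\boldsymbol\zeta,\u),\boldsymbol\zeta)$ cannot be disposed of by absorption alone, so $\|\boldsymbol\zeta\|^2$ decays exponentially only if the time-averaged quantity $\frac{C}{\nu}\|A^{\frac{1}{2}}\u\|^2$ stays strictly below $\nu\lambda_1$; establishing that the background two-dimensional solution $\u$ meets this threshold — that is, that it is exponentially stable in the sense of Definition \ref{def:stability} rather than merely uniformly bounded — is the subtle point and is precisely the content imported from \cite[Theorem 2.1]{Heywood-Rannacher_1986}. Once the $\L^2(\Omega)$ decay is secured, the $D(A^{\frac{1}{2}})$ estimate is comparatively routine.
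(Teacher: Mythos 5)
Your proposal has a genuine gap at its pivot, and it is worth being precise about it, because the paper itself gives no proof of Lemma \ref{le:perturbation}: the lemma is quoted wholesale from \cite[Theorem 2.1]{Heywood-Rannacher_1986}, with the $\L^2(\Omega)$--$\H^1(\Omega)$ passage attributed to the equivalence of stability concepts in \cite{Heywood-Rannacher_1986, Heywood-Rannacher_1986_II}. Your energy computation for (\ref{first-energy-z}) is standard and correct as far as it goes: testing (\ref{eq:perturbation}) with $\boldsymbol\zeta$, using (\ref{Skew_Symmetric_B-bis}) and estimating $(B(\boldsymbol\zeta,\u),\boldsymbol\zeta)$ gives $\frac{d}{dt}\|\boldsymbol\zeta\|^2\le\bigl(\frac{C}{\nu}\|A^{\frac{1}{2}}\u\|^2-\nu\lambda_1\bigr)\|\boldsymbol\zeta\|^2$. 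But your assertion that the uniform bound (\ref{second-energy-infty-u}) ``is exactly what is used to pin down $M$ and $B$'' is false. The constant $\widetilde E_{2,\infty}$ is assembled from $\u_0$ and $\f$ with no smallness whatsoever, so boundedness of $\|A^{\frac{1}{2}}\u\|$ only produces the Gr\"onwall exponent $\frac{C}{\nu}\widetilde E_{2,\infty}-\nu\lambda_1$, which is positive --- growth, not decay --- unless the data satisfy the Reynolds-number-type condition $C\widetilde E_{2,\infty}<\nu^2\lambda_1$, which appears nowhere in (H1)--(H2). No refinement of the energy argument can repair this: for large forcing the two-dimensional Navier--Stokes equations can have a nontrivial global attractor, hence solutions whose perturbations do not decay at all, so the lemma is simply not a consequence of the a priori bounds established in the paper. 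It is in substance a conditional statement whose entire content is the stability input carried by the citation.

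Your closing paragraph concedes this and ``imports'' the decay from \cite[Theorem 2.1]{Heywood-Rannacher_1986} --- which is exactly what the paper does --- but note the circularity this creates in your write-up: what the paper takes from that theorem is (\ref{first-energy-z}) itself, i.e.\ exponential stability in the sense of Definition \ref{def:stability}, not the time-averaged threshold $\frac{C}{\nu}\|A^{\frac{1}{2}}\u\|^2<\nu\lambda_1$ (the threshold is sufficient for decay, but it is neither necessary nor what such stability theorems establish). Once the import is made, your $\L^2$ energy estimate is no longer doing any work: the imported statement \emph{is} the conclusion. The only portion of your plan with independent content is the $\H^1$ bound (\ref{second-energy-z}), and there your instinct is sound: given (\ref{first-energy-z}), integrating the $\L^2$ inequality over $(t,t+1)$ gives exponential decay of the mean of $\|A^{\frac{1}{2}}\boldsymbol\zeta\|^2$, and a uniform-Gr\"onwall argument applied to the inequality obtained by testing with $A\boldsymbol\zeta$ (keeping track of the quadratic term $\frac{C}{\nu^3}\|\boldsymbol\zeta\|^2\|A^{\frac{1}{2}}\boldsymbol\zeta\|^4$ by means of the global $\H^1$ bound on $\boldsymbol\zeta$, and paying the harmless factor $\|\boldsymbol\zeta_0\|\le\lambda_1^{-\frac{1}{2}}\|A^{\frac{1}{2}}\boldsymbol\zeta_0\|$) transports the rate; this is in substance the $\L^2$--$\H^1$ equivalence the paper cites from \cite{Heywood-Rannacher_1986_II}, and a naive multiplication by $e^{M(t-t_0)}$ alone would run into the same sign obstruction as in the first step.
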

Let us denote $P_{1,\infty}=B \| \boldsymbol{\zeta}_0\|^2 $ and $P_{2,\infty}=B \| A^{\frac{1}{2}} \boldsymbol{\zeta}_0\|^2 $ for later use.

\begin{remark} In \cite{Heywood-Rannacher_1986} and \cite{Heywood-Rannacher_1986_II} it was showed that the $\L^2(\Omega)$ and $\H^1(\Omega)$ stability are equivalent. The former is required to derive global-in-time error estimates in the $L^\infty(0,T; \H)$ norm while the latter in the $L^\infty(0,\infty; D(A^{\frac{1}{2}}))$ norm. 
\end{remark}

\begin{corollary} It also follows that 
\begin{equation}\label{stabH2-perturbation}
\nu\int_{t_0}^t \|A\boldsymbol\zeta (s)\|^2{\rm d} s\le P_{3,\infty}(1+ P_{4,\infty} (t-t_0)).
\end{equation}
for all $0\le t_0\le t$, where 
$$
P_{3,\infty}= max\{\widetilde E_{2,\infty}, P_{2,\infty}\}
$$
and
$$
P_{4,\infty}=\frac{C}{\nu^3}(\widetilde E_{1,\infty} \widetilde E_{2,\infty} +  P_{1,\infty}  P_{2,\infty}+ P_{1,\infty}\widetilde E_{2,\infty}).
$$
\end{corollary}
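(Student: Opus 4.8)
The plan is to read (\ref{stabH2-perturbation}) as the $D(A^{\frac12})$-level energy estimate for the perturbation $\boldsymbol\zeta$ of Definition~\ref{def:perturbation}, and to combine it with the uniform-in-time decay bounds of Lemma~\ref{le:perturbation} and the global energy bounds for $\u$. First I would test $(\ref{eq:perturbation})_1$ with $A\boldsymbol\zeta$ in $\L^2(\Omega)$. Since $(\tfrac{d}{dt}\boldsymbol\zeta,A\boldsymbol\zeta)=\tfrac12\tfrac{d}{dt}\|A^{\frac12}\boldsymbol\zeta\|^2$ and $(\nu A\boldsymbol\zeta,A\boldsymbol\zeta)=\nu\|A\boldsymbol\zeta\|^2$, this gives
$$
\frac{1}{2}\frac{d}{dt}\|A^{\frac12}\boldsymbol\zeta\|^2+\nu\|A\boldsymbol\zeta\|^2=-(B(\u,\boldsymbol\zeta),A\boldsymbol\zeta)-(B(\boldsymbol\zeta,\u),A\boldsymbol\zeta)-(B(\boldsymbol\zeta,\boldsymbol\zeta),A\boldsymbol\zeta).
$$
As with the a priori estimates already derived for $\u^\alpha_n$, this computation is rigorously justified on the Galerkin level for the perturbation equation and then passing to the limit; using that $\u$ is a strong two-dimensional solution, one has $\boldsymbol\zeta\in L^2_{\rm loc}(0,\infty;D(A))$, so testing against $A\boldsymbol\zeta$ is legitimate and I will carry it out formally.

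Next I would estimate the three trilinear terms, all of which have $\w=A\boldsymbol\zeta\in\H$. For $(B(\u,\boldsymbol\zeta),A\boldsymbol\zeta)$ and $(B(\boldsymbol\zeta,\boldsymbol\zeta),A\boldsymbol\zeta)$ I would apply (\ref{BL4L4L2}) (with the second argument carrying the $\|A\cdot\|$ factor) followed by Young's inequality to absorb $\|A\boldsymbol\zeta\|^2$, producing the bounds $\tfrac{\nu}{6}\|A\boldsymbol\zeta\|^2+\tfrac{C}{\nu^3}\|\u\|^2\|A^{\frac12}\u\|^2\|A^{\frac12}\boldsymbol\zeta\|^2$ and $\tfrac{\nu}{6}\|A\boldsymbol\zeta\|^2+\tfrac{C}{\nu^3}\|\boldsymbol\zeta\|^2\|A^{\frac12}\boldsymbol\zeta\|^4$, respectively. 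For the remaining term $(B(\boldsymbol\zeta,\u),A\boldsymbol\zeta)$ I would instead use (\ref{BLinfL2L2}) with $\u\mapsto\boldsymbol\zeta$, $\v\mapsto\u$, $\w=A\boldsymbol\zeta$, and Young's inequality, giving $\tfrac{\nu}{6}\|A\boldsymbol\zeta\|^2+\tfrac{C}{\nu^3}\|\boldsymbol\zeta\|^2\|A^{\frac12}\u\|^4$. Absorbing the three $\tfrac{\nu}{6}\|A\boldsymbol\zeta\|^2$ contributions into the viscous term leaves $\tfrac{\nu}{2}\|A\boldsymbol\zeta\|^2$ on the left.

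Then I would bound every factor by its uniform constant: $\|\boldsymbol\zeta(s)\|^2\le P_{1,\infty}$ and $\|A^{\frac12}\boldsymbol\zeta(s)\|^2\le P_{2,\infty}$ from (\ref{first-energy-z})--(\ref{second-energy-z}), together with $\|\u\|^2\le\widetilde E_{1,\infty}$ and $\|A^{\frac12}\u\|^2\le\widetilde E_{2,\infty}$ (the latter from (\ref{second-energy-infty-u})). This yields
$$
\frac{d}{dt}\|A^{\frac12}\boldsymbol\zeta\|^2+\nu\|A\boldsymbol\zeta\|^2\le\frac{C}{\nu^3}\bigl(\widetilde E_{1,\infty}\widetilde E_{2,\infty}P_{2,\infty}+P_{1,\infty}P_{2,\infty}^2+P_{1,\infty}\widetilde E_{2,\infty}^2\bigr).
$$
Integrating over $(t_0,t)$, discarding the nonnegative term $\|A^{\frac12}\boldsymbol\zeta(t)\|^2$, and using $\|A^{\frac12}\boldsymbol\zeta(t_0)\|^2\le P_{2,\infty}$ gives $\nu\int_{t_0}^t\|A\boldsymbol\zeta\|^2\,{\rm d}s$ bounded by $P_{2,\infty}$ plus the right-hand constant times $(t-t_0)$.

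The main obstacle is purely bookkeeping: matching the constants to the stated $P_{3,\infty}$ and $P_{4,\infty}$. The estimates of terms two and three genuinely produce the extra powers $\widetilde E_{2,\infty}^2$ and $P_{2,\infty}^2$, which do not appear in $P_{4,\infty}$; these are reconciled by invoking $\widetilde E_{2,\infty}\le P_{3,\infty}$ and $P_{2,\infty}\le P_{3,\infty}$, both immediate from $P_{3,\infty}=\max\{\widetilde E_{2,\infty},P_{2,\infty}\}$, to trade one such power for a factor $P_{3,\infty}$. This turns each summand into $\le P_{3,\infty}\cdot\tfrac{C}{\nu^3}(\widetilde E_{1,\infty}\widetilde E_{2,\infty}+P_{1,\infty}P_{2,\infty}+P_{1,\infty}\widetilde E_{2,\infty})(t-t_0)=P_{3,\infty}P_{4,\infty}(t-t_0)$, while the constant term obeys $P_{2,\infty}\le P_{3,\infty}$, yielding exactly $\nu\int_{t_0}^t\|A\boldsymbol\zeta(s)\|^2\,{\rm d}s\le P_{3,\infty}(1+P_{4,\infty}(t-t_0))$.
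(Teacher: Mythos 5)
Your proposal is correct and follows essentially the same route as the paper's proof: test $(\ref{eq:perturbation})$ with $A\boldsymbol\zeta$, bound the three trilinear terms via (\ref{BL4L4L2}) and (\ref{BLinfL2L2}) plus Young's inequality, insert the uniform bounds $P_{1,\infty}, P_{2,\infty}, \widetilde E_{1,\infty}, \widetilde E_{2,\infty}$, integrate over $(t_0,t)$, and reconcile the quadratic occurrences of $\widetilde E_{2,\infty}$ and $P_{2,\infty}$ through $P_{3,\infty}=\max\{\widetilde E_{2,\infty},P_{2,\infty}\}$. Your $\nu^{-3}$ constants are in fact the ones consistent with the stated $P_{4,\infty}$ (the $\nu^{-4}$ in the paper's displayed differential inequality appears to be a typo), so no gap remains.
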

\begin{proof} Estimate (\ref{stabH2-perturbation}) is easily obtained from 
$$
\begin{array}{rcl}
\displaystyle
\frac{d}{dt}\|A^{\frac{1}{2}}\boldsymbol\zeta\|^2+\nu\|A\boldsymbol\zeta\|^2
&\le&\displaystyle \frac{C}{\nu^4} (\|\u\|^2 \|A^{\frac{1}{2}}\u\|^2+\|\boldsymbol\zeta\|^2\|A^{\frac{1}{2}}\boldsymbol\zeta\|^2)\|A^{\frac{1}{2}}\boldsymbol\zeta\|^2
\\
&&\displaystyle+\frac{C}{\nu^4}\|\boldsymbol\zeta\|^2 \|A^{\frac{1}{2}}\u\|^4,
\end{array}
$$ 
by integrating over $(t, t_0)$, which is deduced by using (\ref{BL4L4L2}) and (\ref{BLinfL2L2}).
\end{proof}
\subsection{Further results}
Recall that $\u -\u_n^\alpha = \e_n -\z^\alpha_n$ where $\e_n =\u-P_n\u_n=P^\perp_n\u $ and $\z_n^\alpha =\u_n^\alpha -P_n\u$. In the course of our analysis we shall require further estimates for $\z^\alpha_n$. 
\begin{lemma}\label{lm:stabH1-z} Suppose that there exists $K_{2,\infty}=K_{2,\infty}(\u_0,\f, \nu, \Omega)>0$ such that 
$$\|\z^\alpha_n(t)\|^2\le K_{2,\infty} (\lambda^{-\frac{1}{2}}_1\alpha^2+\lambda_{n+1}^{-\frac{3}{2}})$$ holds  for all  $t\in[0, t^*]$. Then there exist $R_{\infty}=R_{\infty}(\u_0,\f, \nu,\Omega)>0$, $n_0\in\mathds{N}$ and $\alpha_0>0$  such that 
\begin{equation}\label{stabH1-z}
\|A^{\frac{1}{2}}\z^\alpha_n(t)\|^2< R_{\infty}
\end{equation}
holds for all $t\in[0, t^*]$, provided $n>n_0$ and  $\alpha<\alpha_0$. 
\end{lemma}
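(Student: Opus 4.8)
The plan is to obtain an $H^1$-type energy inequality for $\z^\alpha_n$ by testing its evolution equation (\ref{eq:z_II}) with $A\z^\alpha_n$, and then to upgrade the resulting local control to a uniform-in-time bound by means of the uniform Gr\"onwall lemma (Lemma $4.1$ in \cite{Braz-Rojas}), exactly as in the derivation of (\ref{second-energy-infty}) and its corollary. Throughout, the hypothesized smallness $\|\z^\alpha_n(t)\|^2\le K_{2,\infty}(\lambda_1^{-\frac12}\alpha^2+\lambda_{n+1}^{-\frac32})$ on $[0,t^*]$ is the source both of the smallness of the forcing and of the control of the (a priori dangerous) quadratic nonlinearity; the constants produced must be kept independent of $t^*$ so that $R_\infty$ depends only on $(\u_0,\f,\nu,\Omega)$.

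First I would record a window bound on the dissipation. Testing (\ref{eq:z_II}) with $\z^\alpha_n$ reproduces the inequality (\ref{th1-lab4}); integrating it over an interval $(t,t+r)$ of fixed length $r>0$ and invoking the hypothesis on $\|\z^\alpha_n\|$, the global a priori bounds (\ref{first-energy-infty2}), (\ref{second-energy-infty2}), (\ref{second-energy-infty2-u}), and $\z^\alpha_n(0)=\0$, one gets a constant $a_3=a_3(\u_0,\f,\nu,\Omega,r)$, independent of $t$, with $\int_t^{t+r}\|A^{\frac12}\z^\alpha_n(s)\|^2\,{\rm d}s\le a_3$ for all $t\in[0,t^*-r]$ (the linear-in-$r$ growth of the a priori integrals is harmless over a fixed window).

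Next I would carry out the $H^1$ estimate itself. Taking the $\L^2(\Omega)$ inner product of (\ref{eq:z_II}) with $A\z^\alpha_n$ produces $\frac12\frac{d}{dt}\|A^{\frac12}\z^\alpha_n\|^2+\nu\|A\z^\alpha_n\|^2$ on the left; the eleven terms on the right are bounded with Lemmas \ref{le:opB} and \ref{le:optB}, the resolvent bounds (\ref{ResolventL2})--(\ref{I-Ja=aAJa}), Poincar\'e (\ref{Poincare}), and Young's inequality, absorbing each occurrence of $\nu\|A\z^\alpha_n\|^2$ into the left-hand side. The terms linear in $\z^\alpha_n$ (those built from $B(\u,\z^\alpha_n)$ and $B(\z^\alpha_n,\u)$) yield a factor $g_0(t)\|A^{\frac12}\z^\alpha_n\|^2$ with $g_0(t)\sim \nu^{-3}(\|\u\|^2+\|A\u\|^2)$, while the $\e_n$- and $\alpha$-terms contribute a forcing $h(t)$ proportional to $(\lambda_1^{-\frac12}\alpha^2+\lambda_{n+1}^{-\frac32})$ through (\ref{errorH1H2yL2H2}), (\ref{stabH1}), (\ref{stabH2}) and the identity (\ref{I-Ja=aAJa}) (the $\alpha$-weighted higher derivatives being controlled by (\ref{second-energy-infty2})). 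The one genuinely nonlinear contribution is the cubic term $(P_nB(\z^\alpha_n,\z^\alpha_n),A\z^\alpha_n)$, which by (\ref{BLinfL2L2}) and Young's inequality is bounded by $\frac{\nu}{8}\|A\z^\alpha_n\|^2+C\nu^{-3}\|\z^\alpha_n\|^2\|A^{\frac12}\z^\alpha_n\|^2\cdot\|A^{\frac12}\z^\alpha_n\|^2$; I would fold the last three factors into the coefficient, writing $g(t)=g_0(t)+C\nu^{-3}\|\z^\alpha_n\|^2\|A^{\frac12}\z^\alpha_n\|^2$. Altogether this gives $\frac{d}{dt}\|A^{\frac12}\z^\alpha_n\|^2\le g(t)\|A^{\frac12}\z^\alpha_n\|^2+h(t)$.

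The crux, and the step I expect to be the main obstacle, is that $g$ secretly contains the factor $\|A^{\frac12}\z^\alpha_n\|^2$ from the cubic term, so the inequality is of Riccati type and ordinary Gr\"onwall would blow up in time. This is precisely where the hypothesis is used: by Step~A together with the assumed smallness, $\int_t^{t+r}g\,{\rm d}s\le C\nu^{-3}\bigl(\int_t^{t+r}(\|\u\|^2+\|A\u\|^2)\,{\rm d}s+\sup_{[0,t^*]}\|\z^\alpha_n\|^2\,a_3\bigr)=:a_1$ is bounded uniformly in $t$ (with the $\z$-part small), and $\int_t^{t+r}h\,{\rm d}s=:a_2\le C(\lambda_1^{-\frac12}\alpha^2+\lambda_{n+1}^{-\frac32})$. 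The uniform Gr\"onwall lemma then gives $\|A^{\frac12}\z^\alpha_n(t)\|^2\le (a_3/r+a_2)e^{a_1}$ for $t\ge r$, while the initial slab $[0,r]$ is handled by ordinary Gr\"onwall on this fixed interval using $\z^\alpha_n(0)=\0$. Taking $R_\infty$ to be the larger of the two bounds, and choosing $n_0$ large and $\alpha_0$ small so that the $\z$-contribution to $a_1$ stays below a fixed threshold, yields the strict inequality (\ref{stabH1-z}) for all $t\in[0,t^*]$ whenever $n>n_0$ and $\alpha<\alpha_0$, with $R_\infty$ depending only on $(\u_0,\f,\nu,\Omega)$.
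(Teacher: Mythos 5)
Your proposal is correct in substance, but it reaches the uniform-in-time bound by a genuinely different mechanism than the paper. Both arguments test (\ref{eq:z_II}) with $A\z^\alpha_n$ and both use the hypothesized smallness of $\|\z^\alpha_n\|^2$ to tame the cubic nonlinearity; the difference is in how the resulting Riccati-type inequality is closed. You fold the quartic term into the Gr\"onwall coefficient, establish windowed bounds $\int_t^{t+r}$ for coefficient, forcing, and dissipation (the last one by integrating (\ref{th1-lab4}) over a window, your Step A), and invoke the classical uniform Gr\"onwall lemma, so no contradiction argument is needed. The paper instead derives the exponentially weighted bound $e^{-t}\int_0^t e^s\|A^{\frac{1}{2}}\z^\alpha_n(s)\|^2\,{\rm d}s\le W_2(\lambda_1^{-\frac{1}{2}}\alpha^2+\lambda_{n+1}^{-\frac{3}{2}})$, its (\ref{lm23-lab1}) --- note this integral is not merely bounded but \emph{small} --- and then runs a first-violation (continuation) argument: supposing $\|A^{\frac{1}{2}}\z^\alpha_n\|^2$ first reaches the ceiling $R_\infty=\frac{4C}{\nu}\|\f\|^2_{L^\infty(0,\infty;\L^2(\Omega))}$ at a time $t'$, it converts the quartic term into $R_\infty$ times the small weighted integral and gets a strict improvement at $t'$, a contradiction once $n\ge n_0$ and $\alpha\le\alpha_0$. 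Your route is more modular and uses an off-the-shelf lemma; the paper's route avoids windowed estimates and rehearses exactly the contradiction template it reuses in the proof of Theorem \ref{Th2}. (A citation quibble: Lemma $4.1$ of \cite{Braz-Rojas}, as used in the paper, is the exponential-weighting device behind $E_{4,\infty}$ and $\widetilde E_{4,\infty}$, not the uniform Gr\"onwall lemma itself, so you would want to cite the standard source for the latter.)

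One claim in your writeup is inaccurate, though harmlessly so: the forcing $h$ cannot be made proportional to $\lambda_1^{-\frac{1}{2}}\alpha^2+\lambda_{n+1}^{-\frac{3}{2}}$. The term $(((I+\alpha^2 A)^{-1}-I)P_n\f,\,A\z^\alpha_n)$, handled via (\ref{I-Ja=aAJa}) and (\ref{ResolventH2}), contributes $\frac{C}{\nu}\|\f\|^2$ after absorbing the $\|A\z^\alpha_n\|$ factor; making this contribution $O(\alpha^2)$ costs a factor $\lambda_n$, i.e.\ requires a coupling of the form $\alpha^2\lambda_n\lesssim 1$ that the lemma does not assume --- this is precisely the trade-off spelled out in Section \ref{Remarks}. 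So your $a_2$ is bounded, not small. This does not damage the argument, since the uniform Gr\"onwall bound $(a_3/r+a_2)e^{a_1}$ only needs $a_1,a_2,a_3$ bounded by data constants; it is also consistent with the paper, whose $R_\infty$ is likewise of size $\|\f\|^2$ rather than small. Finally, to obtain the \emph{strict} inequality (\ref{stabH1-z}) as stated, take $R_\infty$ strictly larger than the maximum of your two Gr\"onwall bounds.
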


\begin{proof} We have by (\ref{th1-lab4}) that 
$$
\begin{array}{rcl}
\displaystyle
\frac{d}{dt} \|\z_n^\alpha\|^2+\nu\|A^{\frac{1}{2}} \z_n^\alpha\|^2&\le&
\displaystyle\frac{C}{\nu} (\lambda_1^{-\frac{1}{2}}\alpha^2+\lambda_{n+1}^{-\frac{3}{2}}) \left[  \lambda_1^{-\frac{1}{2}} ( K_{2,\infty} \lambda_1^{-\frac{1}{2}} + \widetilde E_{2,\infty})  \|A\u\|^2\right.
\\
&&\left.+E_{2,\infty}  (\lambda_1^{-\frac{1}{2}}+\alpha)\|A \u^\alpha_n\|^2+\lambda_1^{\frac{1}{2}}\|\f\|^2\right]
\\
&\le&\displaystyle\frac{C}{\nu}(\lambda_1^{-\frac{1}{2}}\alpha^2+\lambda_{n+1}^{-\frac{3}{2}})[W_1( \|A\u\|^2+\|A \u^\alpha_n\|^2)+\lambda_1^{\frac{1}{2}}\|\f\|^2],
\end{array}
$$
where
$$
W_1:=\max\{\lambda_1^{-\frac{1}{2}} ( K_{2,\infty} \lambda_1^{-\frac{1}{2}} + \widetilde E_{2,\infty}),  E_{2,\infty}  (\lambda_1^{-\frac{1}{2}}+\alpha)\}.
$$
Then if we multiply by $e^t$, we arrive at
$$
\begin{array}{rcl}
\displaystyle
\frac{d}{dt}(e^{t} \|\z_n^\alpha\|^2)-e^t\|\z^\alpha_n\|^2+\nu e^t\|A^{\frac{1}{2}} \z_n^\alpha\|^2&\le&
\displaystyle\frac{C}{\nu} e^{t}  (\lambda_1^{-\frac{1}{2}}\alpha^2+\lambda_{n+1}^{-\frac{3}{2}})[W_1( \|A\u\|^2+\|A \u^\alpha_n\|^2)+\lambda_1^{\frac{1}{2}}\|\f\|^2].
\end{array}
$$
Integrating over $(0,t)$, with $t\le t^*$, and multiplying by $e^{-t}$, we obtain
$$
\begin{array}{rcl}
\displaystyle
\nu e^{-t}\int_{0}^t e^s\|A^{\frac{1}{2}}\z_n^\alpha(s)\|^2{\rm d} s &\le&
\displaystyle
e^{-t}\|\z_n^\alpha(0)\|^2+e^{-t} \int_{0}^t e^s \|\z_n^\alpha(s)\|^2 {\rm d} s
\\
&&\displaystyle
+ \frac{C}{\nu} W_1(\lambda_1^{-\frac{1}{2}}\alpha^2+\lambda_{n+1}^{-\frac{3}{2}})e^{-t}\int_{0}^t e^s( \|A\u(s)\|^2+\|A \u^\alpha_n(s)\|^2){\rm d} s
\\
&&\displaystyle+\frac{C}{\nu} \lambda_1^{\frac{1}{2}}(\lambda_1^{-\frac{1}{2}}\alpha^2+\lambda_{n+1}^{-\frac{3}{2}}) e^{-t}\int_{0}^t e^s\|\f(s)\|^2{\rm d} s
\\
&\le& 
\displaystyle
[K_{2,\infty}+ \frac{C}{\nu}(W_1(E_{4,\infty}+\widetilde E_{4,\infty})+\lambda_1^{\frac{1}{2}}\|\f\|^2_{L^\infty(0,\infty;L^2(\Omega))})] (\lambda_1^{-\frac{1}{2}}\alpha^2+\lambda_{n+1}^{-\frac{3}{2}}),
\end{array}
$$
where we have used the fact that $\z_n^\alpha(0)=\boldsymbol{0}$ and our hypothesis. More compactly, we write 
\begin{equation}\label{lm23-lab1}
 e^{-t}\int_{0}^t e^s\|A^{\frac{1}{2}}\z_n^\alpha(s)\|^2{\rm d} s\le W_2  (\lambda_1^{-\frac{1}{2}}\alpha^2+\lambda_{n+1}^{-\frac{3}{2}}).
\end{equation}

Next we take the $\L^2(\Omega)$-inner product of (\ref{eq:z_II}) with $A\z^\alpha_n$ to get
\begin{equation}\label{lm23-lab2}
\begin{array}{rcl}
\displaystyle
\frac{1}{2}\frac{d}{dt} \|A^{\frac{1}{2}}\z_n^\alpha\|^2&+&\nu\|A\z_n^\alpha\|^2
= -(B(\u, \z^\alpha_n), A\z^\alpha_n) 
\\
&-&(B(\z_n^\alpha, \u), A\z^\alpha_n)-(B(\z^\alpha_n, \z^\alpha_h), A\z^\alpha_n)+(B(\z^\alpha_n, \e_n), A\z^\alpha_n)
\\
&-&( B(\e_n, \z^\alpha_n), A\z_n^\alpha)+(B(\u, \e_n), A\z^\alpha_n)-( B(\e_n, \boldsymbol\eta_n), A\z_n^\alpha)
\\
&+&((I+\alpha^2 A )^{-1}P_n (B(\u^\alpha_n, \u^\alpha_n)-B(\u^\alpha_n,\v^\alpha_n)), A\z_n^\alpha)
\\
&+&((I+\alpha^2 A)^{-1}-I) P_n B(\u^\alpha_n, \u^\alpha_n), A\z^\alpha_n)
\\
&+&((I+\alpha^2 A)^{-1} P_n B^\star(\u_n^\alpha, (I+\alpha^2 A)\u^\alpha_n), A\z^\alpha_n)
\\
&+&(((I+\alpha^2 A)^{-1}-I)P_n\f, A\z^\alpha_n)
\\
&:=&\displaystyle\sum_{i=1}^{11}L_i.
\end{array}
\end{equation}
We shall bound each of the terms on the right-hand side of (\ref{lm23-lab2}) separately. Let $\varepsilon$ be a positive constant (to be adjusted below).  Thus, from (\ref{BL4L4L2}), we have:
$$
\begin{array}{rcl}
L_1&\le& C \|\u\|^{\frac{1}{2}} \|A^{\frac{1}{2}}\u\|^{\frac{1}{2}}  \|A^{\frac{1}{2}}\z_n^\alpha\|^{\frac{1}{2}}\|A\z^\alpha_n\|^{\frac{3}{2}}
\\
&\le&\displaystyle \frac{C_\varepsilon}{\nu^3} \widetilde E_{1,\infty} \widetilde E_{2,\infty} \|A^{\frac{1}{2}}\z^\alpha_n\|^2+\nu \varepsilon \|A\z_n^\alpha\|^2, 
\end{array}
$$ 
$$
\begin{array}{rcl}
L_3
&\le&\displaystyle \frac{C_\varepsilon}{\nu^3} \|\z^\alpha_n\|^2  \|A^{\frac{1}{2}}\z^\alpha_n\|^4+\nu \varepsilon \|A\z_n^\alpha\|^2, 
\end{array}
$$ 

$$
\begin{array}{rcl}
L_5
&\le&\displaystyle \frac{C_\varepsilon}{\nu^3} \widetilde E_{1,\infty} \widetilde E_{2,\infty} \|A^{\frac{1}{2}}\z^\alpha_n\|^2+\nu \varepsilon \|A\z_n^\alpha\|^2,
\end{array}
$$
where we have used (\ref{stabL2})  and (\ref{stabH1}) in bounding $L_5$. In view of (\ref{BLinfL2L2}) and (\ref{Poincare}), we obtain the bounds for $L_2$ and $L_4$:   
$$
\begin{array}{rcl}
L_2&\le& C \|\z^\alpha_n\|^{\frac{1}{2}}  \|A^{\frac{1}{2}}\u\| \|A\z_n^\alpha\|^{\frac{3}{2}} 
\\
&\le& C \lambda^{-\frac{1}{4}}_1 \|A^{\frac{1}{2}}\z^\alpha_n\|^{\frac{1}{2}}  \|A^{\frac{1}{2}}\u\| \|A\z_n^\alpha\|^{\frac{3}{2}} 
\\
&\le&\displaystyle \frac{C_\varepsilon}{\nu^3} \lambda_{1}^{-1}\widetilde E_{2,\infty}^2 \|A^{\frac{1}{2}}\z^\alpha_n\|^2+\nu \varepsilon \|A\z_n^\alpha\|^2, 
\end{array}
$$ 
$$
\begin{array}{rcl}
L_4
&\le&\displaystyle \frac{C_\varepsilon}{\nu^3} \lambda_1^{-1} \widetilde E_{2,\infty}^2 \|A^{\frac{1}{2}}\z^\alpha_n\|^2+\nu \varepsilon \|A\z_n^\alpha\|^2.
\end{array}
$$

It follows, again using (\ref{BL4L4L2}) and also (\ref{errorH1H2yL2H2}) and (\ref{stabH2}), that
$$
\begin{array}{rcl}
L_6&\le& C \|\u\|^{\frac{1}{2}} \|A^{\frac{1}{2}}\u\|^{\frac{1}{2}} \lambda^{-\frac{1}{4}}_{n+1} \|A\e_n\|  \|A\z_n^\alpha\|  
\\
&\le&\displaystyle\frac{C_\varepsilon}{\nu} \widetilde E_{1,\infty}^\frac{1}{2} \widetilde E_{2,\infty}^{\frac{1}{2}} \lambda_{n+1}^{-\frac{1}{2}}  \|A\u\|^2+\varepsilon \nu\|A\z_n^\alpha\|^2.
\end{array}
$$
The bound for $L_7$ proceeds by taking into account (\ref{BLinfL2L2}), (\ref{errorH1H2yL2H2}), (\ref{stabH1}) and  (\ref{stabH2}):
$$
\begin{array}{rcl}
L_7
&\le&\displaystyle\frac{C_\varepsilon}{\nu} \widetilde E_{2,\infty} \lambda_{n+1}^{- 1}  \|A\u\|^2+\varepsilon \nu\|A\z_n^\alpha\|^2.
\end{array}
$$
We estimate $L_8$ analogously as $J_8$. Thus we have by (\ref{I-Ja=aAJa}), (\ref{BLinfL2L2}) and (\ref{Resolvent-alphaH1}) that   
$$
\begin{array}{rcl}
L_8&=&\alpha^2(B(\u^\alpha_n, (I+\alpha^2 A)^{-1} A\z_n^\alpha),  A\u_n^\alpha )
\\
&\le &\alpha \|\u^\alpha_n\|^{\frac{1}{2}} \|A\u^\alpha_n\|^{\frac{3}{2}}  \|A\z_n^\alpha\|
\\
&\le&\displaystyle \frac{C_\varepsilon}{\nu}E_{1,\infty}^\frac{1}{2} E_{2,\infty}^\frac{1}{2}\alpha  \|A\u_n^\alpha\|^2+\nu\varepsilon \|A\z_n^\alpha\|^2.
\end{array}
$$
The term $L_9$ is also treated as its counterpart $J_9$. Then, by Lemma \ref{le:Temam} , we get 
$$
\begin{array}{rcl}
L_9&=&\alpha ( A^{\frac{1}{2}} B(\u^\alpha_n, \u^\alpha_n), (\alpha A)^{\frac{1}{2}}(I+\alpha^2 A)^{-1}A\z^\alpha_n)
\\
&\le & \alpha \|A^{\frac{1}{2}} B(\u^\alpha_n, \u^\alpha_n)\| \|A\z^\alpha_n\|
\\
&\le & \alpha (\|A^{\frac{1}{2}}\u^\alpha_n\|_{L^4(\Omega)}^2 + \|\u^\alpha_n\|_{L^\infty(\Omega)}^\frac{1}{2} \|A\u^\alpha_n\|^{\frac{3}{2}}) \|A\z^\alpha_n\|.
\end{array}
$$
Next Gagliardo-Nirenberg's and Agmon's inequalities give 

$$ L_9 \le\frac{C_\varepsilon}{\nu}\alpha (\alpha E_{2,\infty}+E_{1,\infty}^{\frac{1}{2}} E_{2,\infty}^{\frac{1}{2}}  ) \|A\u^\alpha_n\|^2+\varepsilon\nu\|A\z^\alpha_n\|^2.$$
We proceed in the manner of $J_{10}$ to obtain a bound for $L_{10}$, but using (\ref{BL2L2Linf}):
$$
\begin{array}{rcl}
L_{10}
&\le& \displaystyle\frac{C_\varepsilon}{\nu}E_{1,\infty}^{\frac{1}{2}} E_{2,\infty}^{\frac{1}{2}} \alpha\|A\u^\alpha_n\|^2+\varepsilon \nu\|A \z_n^\alpha\|^2.
\end{array}
$$
By virtue of (\ref{ResolventH2}), we see that 
$$
\begin{array}{rcl}
L_{11}
&\le&\displaystyle\frac{C_\varepsilon}{\nu}\|\f\|^2+\varepsilon\nu \|A^{\frac{1}{2}}\z^\alpha_n\|^2.
\end{array}
$$

Assembling the estimates of the $L_i$'s into (\ref{lm23-lab2}) and adjusting  $\varepsilon$ properly, we find 
\begin{equation}\label{lm23-lab3}
\begin{array}{rcl}
\displaystyle
\frac{d}{dt} \|A^{\frac{1}{2}}\z_n^\alpha\|^2+\nu\|A\z_n^\alpha\|^2&\le&\displaystyle \frac{C}{\nu} W_3\|A^{\frac{1}{2}}\z^\alpha_n\|^2+\frac{C}{\nu} K_{2,\infty}(\lambda^{-\frac{1}{2}}_1\alpha^2+\lambda_{n+1}^{-\frac{3}{2}}) \|A^{\frac{1}{2}}\z^\alpha_n\|^4
\\
&&\displaystyle+\frac{C}{\nu}W_4(\alpha+\lambda_{n+1}^{-\frac{1}{2}}) (\|A\u\|^2+\|A\u^\alpha_n\|^2)
\\
&& \displaystyle+\frac{C}{\nu}\|\f\|^2_{L^\infty(0,\infty; L^2(\Omega))}.
\end{array}
\end{equation}
where 
$$W_3= \frac{\widetilde E_{2,\infty}}{\nu^2} (\widetilde E_{1,\infty} +\widetilde E_{2,\infty}\lambda_1^{-1}),$$
$$W_4=\max\{\widetilde E_{2,\infty}^{\frac{1}{2}}(\widetilde E_{1,\infty}^{\frac{1}{2}}  +\widetilde E_{2,\infty}^{\frac{1}{2}}\lambda_{n+1}^{-\frac{1}{2}}) , E^{\frac{1}{2}}_{2,\infty}[E^{\frac{1}{2}}_{1,\infty}+E^{\frac{1}{2}}_{2,\infty}\alpha]\}.$$

Now we claim that 
\begin{equation}\label{lm23-lab4}
\|A^{\frac{1}{2}}\z^\alpha_n( t)\|^2< R_{\infty}:=\frac{4C}{\nu} \|\f\|^2_{L^\infty(0,\infty; L^2(\Omega))}
\end{equation}
holds for all $t\in[0,t^*]$, whenever $n\ge n_0$ and $\alpha \le \alpha_0$, where $n_0$ and $\alpha_0$ will determine later. Conversely, suppose that (\ref{lm23-lab4}) fails; i.e. suppose that there must be some $n\ge n_0$ and $\alpha\le \alpha_0 $ for which there is a first time $ t'$ so that the bound is attained. That is, let $t'$ be the first time  such that  
\begin{equation}\label{lm23-lab5}
\|A^{\frac{1}{2}}\z^\alpha_n( t')\|^2=R_{\infty};
\end{equation}
hence
\begin{equation}\label{lm23-lab6}
\|A^{\frac{1}{2}}\z^\alpha_n( t)\|^2\le R_{\infty}
\end{equation}
for all $t\in[0,t']$. Next,  multiplying (\ref{lm23-lab3}) by $e^t$, integrating over $(0, t')$, and multiplying by $e^{- t'}$ successively gives 
$$
\begin{array}{rcl}
\|A^{\frac{1}{2}}\z_n^\alpha(t')\|^2&\le&\displaystyle \frac{C}{\nu} W_3 e^{-t'}\int_0^{t'} e^{s}\|A^{\frac{1}{2}}\z^\alpha_n(s)\|^2 d{\rm s}
\\
&&\displaystyle+ \frac{C}{\nu}K_{2,\infty}(\lambda^{-\frac{1}{2}}_1\alpha^2+\lambda_{n+1}^{-\frac{3}{2}})e^{-t'}\int_0^{t'} e^{s} \|A^{\frac{1}{2}}\z^\alpha_n(s)\|^4\,d{\rm s}
\\
&&\displaystyle+\frac{C}{\nu} W_4(\alpha+ \lambda_{n+1}^{-\frac{1}{2}}) e^{-t'}\int_0^{t'} e^{s}(\|A\u(s)\|^2+\|A\u^\alpha_n(s)\|^2)\, d{\rm s}
\\
&&\displaystyle+\frac{C}{\nu}\|\f\|^2_{L^\infty(0,\infty; L^2(\Omega))}.
\end{array}
$$ 
Now, form (\ref{lm23-lab1}) and (\ref{lm23-lab6}), we see that  
$$
\begin{array}{rcl}
\|A^{\frac{1}{2}}\z_n^\alpha(t')\|^2&\le&\displaystyle\frac{C}{\nu} W_3 W_2 (\lambda_1^{-\frac{1}{2}}\alpha^2+\lambda_{n+1}^{-\frac{3}{2}})+\frac{C}{\nu}K_{2,\infty}R_\infty W_2 (\lambda^{-\frac{1}{2}}_1\alpha^2+\lambda_{n+1}^{-\frac{3}{2}})^2
\\
&&\displaystyle+\frac{C}{\nu}W_4(\alpha+\lambda_{n+1}^{-\frac{1}{2}})(\widetilde E_{4,\infty}+ E_{4,\infty}) +\frac{C}{\nu}\|\f\|^2_{L^2(0,\infty; L^2(\Omega))}.
\end{array}
$$
Therefore, if we select $n_0\in \mathds{N}$ and $\alpha_0>0$ sufficiently large such that 
$$
W_3 W_2 (\lambda_1^{-\frac{1}{2}}\alpha^2+\lambda_{n+1}^{-\frac{3}{2}})<\|\f\|^2_{L^2(0,\infty; L^2(\Omega))}, 
$$
$$
K_{2,\infty}R_\infty W_2 (\lambda^{-\frac{1}{2}}_1\alpha^2+\lambda_{n+1}^{-\frac{3}{2}})^2<\|\f\|^2_{L^2(0,\infty; L^2(\Omega))} 
$$
and
$$
W_4(\alpha+\lambda_{n+1}^{-\frac{1}{2}})(E_{4,\infty}+\widetilde E_{4,\infty})<\|\f\|^2_{L^2(0,\infty; L^2(\Omega))} 
$$
we arrive at 
$$
\|A^{\frac{1}{2}}\z_n^\alpha(t')\|^2< R_{\infty},
$$
which is a contradiction with (\ref{lm23-lab5}). Thus, (\ref{lm23-lab4}) cannot fail.
\end{proof}
Next, we write (\ref{eq:perturbation}) as 
\begin{equation}\label{eq:perturbation_II}
\frac{d}{dt} P_n\boldsymbol\zeta +\nu  A P_n \boldsymbol\zeta=-P_nB(\u, \boldsymbol\zeta)-P_nB(\boldsymbol\zeta, \u)-P_nB(\boldsymbol\zeta,\boldsymbol\zeta).
\end{equation}
Using the fact that $\boldsymbol\zeta=P_n\boldsymbol\zeta+P_n^\perp\boldsymbol\zeta$, we split the right hand side of (\ref{eq:perturbation_II}) as follows:
\begin{equation}\label{eq:perturbation_III}
\begin{array}{rcl}
\displaystyle
\frac{d}{dt} P_n\boldsymbol\zeta +\nu A P_n\boldsymbol\zeta&=&-P_nB(\u, P_n\boldsymbol\zeta)-P_nB(\u, P_n^\perp\boldsymbol\zeta)
\\
&&
-P_nB(P_n\boldsymbol\zeta, \u)-P_nB(P_n^\perp\boldsymbol\zeta, \u)
\\
&&
-P_nB(P_n\boldsymbol\zeta,P_n\boldsymbol\zeta)-P_nB(P_n\boldsymbol\zeta,P_n^\perp\boldsymbol\zeta)
\\
&&-P_nB(P_n^\perp\boldsymbol\zeta,P_n\boldsymbol\zeta)-P_nB(P_n^\perp\boldsymbol\zeta,P_n^\perp\boldsymbol\zeta).
\end{array}
\end{equation}
Let $\w^\alpha_n=\z^\alpha_n-P_n\boldsymbol\zeta$. Then, subtracting (\ref{eq:perturbation_III}) from (\ref{eq:z_II}) gives
\begin{equation}\label{eq:w}
\begin{array}{rcl}
\displaystyle
\frac{d}{dt}\w^\alpha_n+\nu A \w^\alpha_n&=&-P_nB(\u, \w^\alpha_n)-P_nB(\w^\alpha_n, \u)
\\
&&-P_nB(\z^\alpha_n, \w^\alpha_n)+P_nB(\w^\alpha_n, P_n\boldsymbol\zeta)
\\
&&+P_nB(\u, P_n^\perp\boldsymbol\zeta)+P_nB(P_n^\perp\boldsymbol\zeta, \u)
\\
&&+P_nB(P_n\boldsymbol\zeta,P_n^\perp\boldsymbol\zeta)+P_nB(P_n^\perp\boldsymbol\zeta,P_n\boldsymbol\zeta)
\\
&&+P_nB(P_n^\perp\boldsymbol\zeta,P_n^\perp\boldsymbol\zeta)+P_nB(\z^\alpha_n, \e_n)
\\
&&+P_nB(\e_n,\z^\alpha_n)+P_nB(\u, \e_n)+P_nB(\e_n, \boldsymbol\eta_n)
\\
&&+(I+\alpha^2 A )^{-1}P_n (B(\u^\alpha_n, \u^\alpha_n)-B(\u^\alpha_n,\v^\alpha_n))
\\
&&+((I+\alpha^2 A)^{-1}-I) P_nB(\u^\alpha_n, \u^\alpha_n)
\\
&&+(I+\alpha^2 A)^{-1} P_nB^\star(\u_n^\alpha, \v^\alpha_n)
\\
&&+((I+\alpha^2 A)^{-1}-I)P_n\f.
\end{array}
\end{equation}
\begin{lemma}\label{lm:stabL2-w } Under the conditions of Lemma \ref{lm:stabH1-z},  it follows that, for $t_0\ge 0$,
\begin{equation}\label{stabL2-w}
\begin{array}{rcl}
\|\w^\alpha_n(t)\|^2&\le& e^{\frac{C_0}{\nu} (\widetilde E_{2, \infty}^2+P_{2, \infty}^2) (t-t_0)}\times
\\
&&\displaystyle\times\left\{ \|\w^\alpha_n(t_0)\|^2+\frac{C_1}{\nu}(\lambda^{-\frac{1}{2}}_1\alpha^{2}+\lambda_{n+1}^{-\frac{3}{2}})\int_{t_0}^t g(s)\, {\rm d}s\right\},
\end{array}
\end{equation}
for all $t\ge t_0$, where $g(s)= (\widetilde E_{2,\infty}+R_\infty)\lambda_{1}^{-\frac{1}{2}}\|A\u\|^2+E_{2, \infty}(\alpha+\lambda_1^{-\frac{1}{2}})\|A\u^\alpha_n\|^2+ \lambda_1^{-\frac{1}{2}}P_{2,\infty}\|A\boldsymbol\zeta\|^2+\|\f\|^2.$
\end{lemma}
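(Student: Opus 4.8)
The plan is to test the evolution equation (\ref{eq:w}) for $\w^\alpha_n$ against $\w^\alpha_n$ in $\L^2(\Omega)$, which produces $\frac{1}{2}\frac{d}{dt}\|\w^\alpha_n\|^2+\nu\|A^{\frac{1}{2}}\w^\alpha_n\|^2$ on the left-hand side, and then to split the right-hand side into a part that Gr\"onwall's lemma absorbs and an inhomogeneous part carrying the factor $(\lambda_1^{-\frac{1}{2}}\alpha^2+\lambda_{n+1}^{-\frac{3}{2}})$. First I would observe that the two terms $-(B(\u,\w^\alpha_n),\w^\alpha_n)$ and $-(B(\z^\alpha_n,\w^\alpha_n),\w^\alpha_n)$ vanish identically by the skew-symmetry (\ref{Skew_Symmetric_B-bis}). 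The remaining two terms quadratic in $\w^\alpha_n$, namely $-(B(\w^\alpha_n,\u),\w^\alpha_n)$ and $(B(\w^\alpha_n,P_n\boldsymbol\zeta),\w^\alpha_n)$, are the ones that feed the exponential factor: rewriting them via (\ref{Skew_Symmetric_B}) as $(B(\w^\alpha_n,\w^\alpha_n),\u)$ and $-(B(\w^\alpha_n,\w^\alpha_n),P_n\boldsymbol\zeta)$, then bounding with (\ref{BL4L2L4}), Young's inequality and the Poincar\'e estimate (\ref{Poincare}), one obtains contributions of the shape $\frac{C}{\nu^3}\lambda_1^{-1}\|A^{\frac{1}{2}}\u\|^4\|\w^\alpha_n\|^2$ and $\frac{C}{\nu^3}\lambda_1^{-1}\|A^{\frac{1}{2}}P_n\boldsymbol\zeta\|^4\|\w^\alpha_n\|^2$, plus a small multiple of $\nu\|A^{\frac{1}{2}}\w^\alpha_n\|^2$. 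The uniform $H^1$ bound $\|A^{\frac{1}{2}}\u\|^2\le\widetilde E_{2,\infty}$ from (\ref{second-energy-infty-u}), together with $\|A^{\frac{1}{2}}P_n\boldsymbol\zeta\|\le\|A^{\frac{1}{2}}\boldsymbol\zeta\|$ from (\ref{stabH1}) and $\|A^{\frac{1}{2}}\boldsymbol\zeta\|^2\le P_{2,\infty}$ from (\ref{second-energy-z}), then produces precisely the coefficient $\frac{C_0}{\nu}(\widetilde E_{2,\infty}^2+P_{2,\infty}^2)$ multiplying $\|\w^\alpha_n\|^2$.

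Next I would treat every other term on the right of (\ref{eq:w}) as a source, estimating each so that the common factor $(\lambda_1^{-\frac{1}{2}}\alpha^2+\lambda_{n+1}^{-\frac{3}{2}})$ can be pulled out. The terms carrying $P_n^\perp\boldsymbol\zeta$ are handled by the projection estimates (\ref{errorL2H1}) and (\ref{errorH1H2yL2H2}), which convert each occurrence of $P_n^\perp\boldsymbol\zeta$ into a power of $\lambda_{n+1}^{-1}$ times $\|A\boldsymbol\zeta\|$; after Young's inequality these yield the term $\lambda_1^{-\frac{1}{2}}P_{2,\infty}\|A\boldsymbol\zeta\|^2$ of $g$. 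The terms carrying $\e_n=P_n^\perp\u$, namely $P_nB(\z^\alpha_n,\e_n)$, $P_nB(\e_n,\z^\alpha_n)$, $P_nB(\u,\e_n)$ and $P_nB(\e_n,\boldsymbol\eta_n)$, are estimated exactly as $J_6$ and $J_7$ in the proof of Theorem~\ref{Th1}, except that the two terms containing $\z^\alpha_n$ now invoke the bound $\|A^{\frac{1}{2}}\z^\alpha_n\|^2<R_\infty$ supplied by Lemma~\ref{lm:stabH1-z}; this is the origin of the factor $(\widetilde E_{2,\infty}+R_\infty)\lambda_1^{-\frac{1}{2}}\|A\u\|^2$ in $g$. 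Finally the four $\alpha$-dependent terms, those involving $(I+\alpha^2 A)^{-1}$ and $B^\star$, are controlled verbatim as $J_8$ through $J_{11}$, using the resolvent identity (\ref{I-Ja=aAJa}) together with (\ref{ResolventL2})--(\ref{ResolventH2}) and the relation (\ref{rel:B_starB}); they supply the remaining pieces $E_{2,\infty}(\alpha+\lambda_1^{-\frac{1}{2}})\|A\u^\alpha_n\|^2$ and $\|\f\|^2$ of $g$ with the prefactor $\lambda_1^{-\frac{1}{2}}\alpha^2$.

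Assembling these bounds and choosing $\varepsilon$ small enough that all the $\nu\varepsilon\|A^{\frac{1}{2}}\w^\alpha_n\|^2$ contributions are absorbed by the viscous term on the left gives a differential inequality of the form $\frac{d}{dt}\|\w^\alpha_n\|^2\le \frac{C_0}{\nu}(\widetilde E_{2,\infty}^2+P_{2,\infty}^2)\|\w^\alpha_n\|^2+\frac{C_1}{\nu}(\lambda_1^{-\frac{1}{2}}\alpha^2+\lambda_{n+1}^{-\frac{3}{2}})g(t)$. Multiplying by the integrating factor $e^{-\frac{C_0}{\nu}(\widetilde E_{2,\infty}^2+P_{2,\infty}^2)(t-t_0)}$ and integrating from $t_0$ to $t$ then yields (\ref{stabL2-w}). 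I expect the main obstacle to be organizational rather than conceptual: one must check that every source term genuinely carries at least one factor of either $\alpha$ (from a resolvent commutator) or $\lambda_{n+1}^{-1}$ (from a high-mode projection), and that the various fractional powers combine to no worse than the clean envelope $(\lambda_1^{-\frac{1}{2}}\alpha^2+\lambda_{n+1}^{-\frac{3}{2}})$. The genuinely delicate point is the pair of mixed terms $P_nB(\z^\alpha_n,\e_n)$ and $P_nB(\e_n,\z^\alpha_n)$, where one must spend the $H^1$ regularity of $\z^\alpha_n$ through $R_\infty$ rather than that of $\u$, since $\z^\alpha_n$ is only known to be bounded, not small, in $D(A^{\frac{1}{2}})$.
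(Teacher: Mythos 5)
Your proposal is correct and follows essentially the same route as the paper's proof: testing (\ref{eq:w}) with $\w^\alpha_n$, killing $-(B(\u,\w^\alpha_n),\w^\alpha_n)$ and $-(B(\z^\alpha_n,\w^\alpha_n),\w^\alpha_n)$ by (\ref{Skew_Symmetric_B-bis}), feeding the terms quadratic in $\w^\alpha_n$ into Gr\"onwall, treating the $P_n^\perp\boldsymbol\zeta$ and $\e_n$ terms via the projection estimates and the $R_\infty$ bound of Lemma \ref{lm:stabH1-z}, handling the $\alpha$-dependent terms exactly as $J_8$--$J_{11}$, and concluding with Gr\"onwall's inequality. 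The only (cosmetic) difference is in the Gr\"onwall terms: you rewrite $-(B(\w^\alpha_n,\u),\w^\alpha_n)$ and $(B(\w^\alpha_n,P_n\boldsymbol\zeta),\w^\alpha_n)$ via (\ref{Skew_Symmetric_B}) and use (\ref{BL4L2L4}) with Poincar\'e, producing squared constants $\widetilde E_{2,\infty}^2+P_{2,\infty}^2$ (which actually matches the exponent in the lemma's statement), whereas the paper applies (\ref{BL4L2L4}) directly and obtains first powers $\widetilde E_{2,\infty}+P_{2,\infty}$ in its final differential inequality.
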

\begin{proof} Let us take the $\L^2(\Omega)$-inner product of (\ref{eq:w}) with $\w^\alpha_n$ to obtain 
\begin{equation}\label{lm24-lab1}
\begin{array}{rcl}
\displaystyle
\frac{1}{2}\frac{d}{dt}\|\w^\alpha_n\|^2+\nu \|A^{\frac{1}{2}} \w^\alpha_n\|^2&=&-(B(\u, \w^\alpha_n),\w^\alpha_n)-(B(\w^\alpha_n, \u), \w^\alpha_n)
\\
&&-(B(\z^\alpha_n, \w^\alpha_n), \w^\alpha_n)+(B(\w^\alpha_n, P_n\boldsymbol\zeta), \w^\alpha_n)
\\
&&+B(\u, P_n^\perp\boldsymbol\zeta), \w^\alpha_n)+(B(P_n^\perp\boldsymbol\zeta, \u), \w^\alpha_n)
\\
&&+(B(P_n\boldsymbol\zeta,P_n^\perp\boldsymbol\zeta), \w^\alpha_n)
+(B(P_n^\perp\boldsymbol\zeta,P_n\boldsymbol\zeta), \w^\alpha_n)
\\
&&+(B(P_n^\perp\boldsymbol\zeta,P_n^\perp\boldsymbol\zeta), \w^\alpha_n)+(B(\z^\alpha_n, \e_n), \w^\alpha_n)
\\
&&+(B(\e_n,\z^\alpha_n), \w^\alpha_n)+(B(\u, \e_n), \w^\alpha_n)+(B(\e_n, \boldsymbol\eta_n),\w^\alpha_n)
\\
&&+((I+\alpha^2 A )^{-1}P_n (B(\u^\alpha_n, \u^\alpha_n)-B(\u^\alpha_n,\v^\alpha_n)), \w^\alpha_n)
\\
&&+(((I+\alpha^2 A)^{-1}-I) P_nB(\u^\alpha_n, \u^\alpha_n), \w^\alpha_n)
\\
&&+((I+\alpha^2 A)^{-1} P_nB^\star(\u_n^\alpha, \v^\alpha_n),\w^\alpha_n)
\\
&&+(((I+\alpha^2 A)^{-1}-I)P_n\f, \w^\alpha_n)
\\
&:=&\displaystyle\sum_{i=1}^{17} M_i.
\end{array}
\end{equation}
We first observe that $M_1$ and $M_3$ vanish by (\ref{Skew_Symmetric_B-bis}). From (\ref{BL4L2L4}), we bound
$$
M_2
\le \frac{C_\varepsilon}{\nu} \widetilde E_{2,\infty} \|\w^\alpha_n\|^2+\varepsilon\nu \|A^{\frac{1}{2}} \w^\alpha_n\|^2,
$$
$$
M_4 
\le \frac{C_\varepsilon}{\nu} P_{2,\infty} \|\w^\alpha_n\|^2+\varepsilon\nu \|A^{\frac{1}{2}} \w^\alpha_n\|^2.
$$
Combining successively (\ref{Skew_Symmetric_B}), (\ref{BLinfL2L2}), (\ref{errorL2H1}),  (\ref{errorH1H2yL2H2}), (\ref{stabH1}), (\ref{stabH2}) and (\ref{Poincare}), we see easily that   
$$
\begin{array}{rcl}
M_5
&\le&\displaystyle\frac{C_\varepsilon}{\nu} \lambda_{1}^{-\frac{1}{2}}\lambda^{-\frac{3}{2}}_{n+1} (\widetilde E_{2,\infty}\|A\u\|^2+ P_{2, \infty}\|A\boldsymbol\zeta\|^2)+\varepsilon \nu\|A^{\frac{1}{2}}\w_n^\alpha\|^2,
\end{array}
$$
$$
\begin{array}{rcl}
M_7
&\le&\displaystyle\frac{C_\varepsilon}{\nu} \lambda_{1}^{-\frac{1}{2}}\lambda^{-\frac{3}{2}}_{n+1} P_{2, \infty} \|A\boldsymbol\zeta\|^2+\varepsilon \nu\|A^{\frac{1}{2}}\w_n^\alpha\|^2,
\end{array}
$$
$$
\begin{array}{rcl}
M_{12}
&\le&\displaystyle\frac{C_\varepsilon}{\nu}\lambda_{1}^{-\frac{1}{2}} \lambda^{-\frac{3}{2}}_{n+1} \widetilde E_{2, \infty} \|A\u\|^2+\varepsilon \nu\|A^{\frac{1}{2}}\w_n^\alpha\|^2.
\end{array}
$$
As before, but utilizing (\ref{BL2L2Linf}), instead of (\ref{BLinfL2L2}), there are no difficulties in finding that 
$$
M_6
\le\frac{C_\varepsilon}{\nu} \lambda_{1}^{-\frac{1}{2}} \lambda^{-\frac{3}{2}}_{n+1} (\widetilde E_{2,\infty}\|A\u\|^2+ P_{2, \infty}\|A\boldsymbol\zeta\|^2)+\varepsilon \nu\|A^{\frac{1}{2}}\w_n^\alpha\|^2,
$$
$$
M_8
\le\frac{C_\varepsilon}{\nu}\lambda_{1}^{-\frac{1}{2}} \lambda^{-\frac{3}{2}}_{n+1} P_{2, \infty} \|A\boldsymbol\zeta\|^2+\varepsilon \nu\|A^{\frac{1}{2}}\w_n^\alpha\|^2,
$$
$$
M_9
\le\frac{C_\varepsilon}{\nu}\lambda_{1}^{-\frac{1}{2}} \lambda^{-\frac{3}{2}}_{n+1} P_{2,\infty} \|A\boldsymbol\zeta\|^2+\varepsilon \nu\|A^{\frac{1}{2}}\w_n^\alpha\|^2,
$$
$$M_{13}\le \frac{C_\varepsilon}{\nu} \lambda^{-\frac{1}{2}}_{1} \lambda^{-\frac{3}{2}}_{n+1} \widetilde E_{2,\infty} \|A\u\|^2+\varepsilon \nu\|A^{\frac{1}{2}}\w_n^\alpha\|^2. $$
For $M_{10}$ and $M_{11}$, we use (\ref{BL4L2L4}) and (\ref{Poincare}) to get
$$
\begin{array}{rcl}
M_{10}
&\le&\displaystyle\frac{C_\varepsilon}{\nu}\lambda^{-\frac{1}{2}}_{1} \lambda^{-\frac{3}{2}}_{n+1}  R_{\infty}\|A\u\|^2+\varepsilon \nu\|A^{\frac{1}{2}}\w_n^\alpha\|^2,
\end{array}
$$
$$
\begin{array}{rcl}
M_{11}
&\le&\displaystyle\frac{C_\varepsilon}{\nu} \lambda^{-\frac{1}{2}}_{1} \lambda^{-\frac{3}{2}}_{n+1} R_{\infty} \|A\u\|^2+\varepsilon \nu\|A^{\frac{1}{2}}\w_n^\alpha\|^2,
\end{array}
$$
where we have employed estimate (\ref{stabH1-z}). Finally, the $M_{i}$'s, for $i=14, 15, 16 ,17$, are bounded exactly as the $J_i$'s, for $i=8, 9, 10, 11$, respectively. Thus, we obtain
$$
\begin{array}{rcl}
M_{14}
&\le&\displaystyle\frac{C_\varepsilon}{\nu}  \lambda_1^{-\frac{1}{2}} \alpha^3 E_{2,\infty}   \|A\u^\alpha_n\|^2+\varepsilon\nu \|A^{\frac{1}{2}}\w^\alpha_n\|^2.
\end{array}
$$ 
$$
\begin{array}{rcl}
M_{15}
&\le& \displaystyle\frac{C_\varepsilon}{\nu}  \lambda_1^{-1} \alpha^2 E_{2,\infty} \|A\u^\alpha_n\|^2 +\varepsilon\nu\|A^{\frac{1}{2}}\w^\alpha_n\|^2,
\end{array}
$$
$$
\begin{array}{rcl}
M_{16}
&\le& \displaystyle \frac{C_\varepsilon}{\nu} \lambda_1^{-\frac{1}{2}} \alpha^3  E_{2,\infty} \|A\u^\alpha_n\|^2+\varepsilon \nu\|A^{\frac{1}{2}} \w_n^\alpha\|^2,
\end{array}
$$
$$
\begin{array}{rcl}
M_{17}
&\le &
\displaystyle\frac{C_\varepsilon}{\nu} \alpha^2 \|\f\|^2+\varepsilon\nu \|A^{\frac{1}{2}}\w^\alpha_n\|^2.
\end{array}
$$
The previous estimates applied  to (\ref{lm24-lab1}) yield the bound, after choosing $\varepsilon$ correctly,  
$$
\begin{array}{rcl}
\displaystyle
\frac{d}{dt}\|\w^\alpha_n\|^2+\nu \|A^{\frac{1}{2}} \w^\alpha_n\|^2
&\le& 
\displaystyle
\frac{C}{\nu} ( \widetilde E_{2,\infty}+P_{2, \infty}) \|\w^\alpha_n\|^2 
\\
&&
\displaystyle
+\frac{C}{\nu} (\lambda_1^{-\frac{1}{2}}\alpha^{2}+\lambda^{-\frac{3}{2}}_{n+1}) g(t),
\end{array}
$$
where $$g(t)= (\widetilde E_{2,\infty}+R_\infty)\lambda_{1}^{-\frac{1}{2}}\|A\u\|^2+E_{2, \infty}(\alpha+\lambda_1^{-\frac{1}{2}})\|A\u^\alpha_n\|^2+ \lambda_1^{-\frac{1}{2}}P_{2,\infty}\|A\boldsymbol\zeta\|^2+\lambda_1^{\frac{1}{2}}\|\f\|^2.$$
Thus, Grönwall's inequality gives (\ref{stabL2-w}).
\end{proof}

Finally, from (\ref{second-energy-infty2}), (\ref{second-energy-infty2-u}), and (\ref{stabH2-perturbation}), we obtain  
$$
\begin{array}{rcl}
\|\w^\alpha_n(t)\|^2&\le& e^{\frac{C_0}{\nu} (\widetilde E_{2, \infty}+P_{2, \infty}) (t-t_0)}\times
\\
&&\displaystyle\times\left\{ \|\w^\alpha_n(t_0)\|^2+\frac{C_1}{\nu^2}S_{1,\infty} S_{2,\infty}(\lambda_1^{-\frac{1}{2}}\alpha^{2}+\lambda_{n+1}^{-\frac{3}{2}})(1+S_{3,\infty}(t-t_0))\right.
\\
&&\left.
\displaystyle\quad\quad+C_2(\frac{S_{1,\infty}}{\nu^2} +\lambda_1^{\frac{1}{2}})(\lambda_1^{-\frac{1}{2}}\alpha^{2}+\lambda_{n+1}^{-\frac{3}{2}})\|\f\|^2_{L^\infty(0,\infty; L^2(\Omega))}(t-t_0)
\right\},
\end{array}
$$
where 
$$S_{1,\infty}=\max\{(\widetilde E_{2, \infty}+R_\infty)\lambda_1^{-\frac{1}{2}}, E_{2, \infty}(\alpha+\lambda_1^{-\frac{1}{2}}), \lambda_1^{-\frac{1}{2}}P_{2,\infty}  \},$$
$$S_{2,\infty}=\max\{\widetilde E_{2,\infty},E_{2,\infty}, P_{3,\infty}\}$$
and
$$S_{3,\infty}=\max\{\widetilde E_{3,\infty},E_{3,\infty}, P_{4,\infty}\}.$$
More compactly, 
\begin{equation}\label{stabL2-w_II}
\begin{array}{rcl}
\|\w^\alpha_n(t)\|^2&\le& e^{G_{1,\infty} (t-t_0)}\times
\\
&&\displaystyle\times\left\{ \|\w^\alpha_n(t_0)\|^2+G_{2,\infty} (\lambda_1^{-\frac{1}{2}}\alpha^{2}+\lambda_{n+1}^{-\frac{3}{2}})(1+G_{3,\infty}(t-t_0))\right.
\\
&&\left.
\displaystyle\quad\quad+G_{4,\infty}(\lambda_1^{-\frac{1}{2}}\alpha^{2}+\lambda_{n+1}^{-\frac{3}{2}})\|\f\|^2_{L^\infty(0,\infty; L^2(\Omega))}(t-t_0)
\right\},
\end{array}
\end{equation}
where the $G_{i,\infty}$'s are defined in the obvious way. 
\subsection{Proof of Theorem \ref{Th2}}
We already know that the  solution $\u$ of the Navier-Stokes equations is stable in the $\L^2(\Omega)$ sense in view of Lemma \ref{le:perturbation}.  Then choose $T$  large enough that
\begin{equation}\label{th2-lab1}
B e^{- MT}\le\frac{1}{4} 
\end{equation}
and hence define
\begin{equation}\label{th2-lab2}
K_{2, \infty}:=4 e^{ G_{1, \infty} T}\{  G_{2,\infty} (1+G_{3,\infty}T)+G_{4,\infty} \|\f\|^2_{L^\infty(0,\infty;L^2(\Omega)) }T\}.
\end{equation}
For all  $n\ge n_0$ and $\alpha\le \alpha_0$ in Lemma \ref{lm:stabH1-z}, we assert
\begin{equation}\label{th2-lab4}
\|\z^\alpha_n (t)\|^2< K_{2, \infty} (\lambda_1^{-\frac{1}{2}}\alpha^2+\lambda_{n+1}^{-\frac{3}{2}}).
\end{equation}
for all $t\ge 0$. But if not, there would exist  some $n\ge n_0$ and $\alpha\le\alpha_0$ such that (\ref{th2-lab1}) fails for some time $t^*$.  Let $t^*$ be the first value of $t$ for which 
\begin{equation}\label{th2-lab5}
\|\z^\alpha_n(t^*)\|^2=K_{2, \infty} (\lambda_1^{-\frac{1}{2}}\alpha^2+\lambda_{n+1}^{-\frac{3}{2}}).
\end{equation}
As a result, we have that 
\begin{equation}\label{th2-lab5-bis}
\|\z^\alpha_n(t)\|^2\le K_{2, \infty} (\lambda_1^{-\frac{1}{2}}\alpha^2+\lambda_{n+1}^{-\frac{3}{2}})
\end{equation}
holds for all $t^*\in[0,t^*]$; therefore inequality (\ref{stabL2-w_II}) is true in view of Lemmas \ref{lm:stabH1-z} and \ref{lm:stabL2-w }.

Firstly assume $t^*\le T$. Then use inequality (\ref{stabL2-w_II}), with $t_0=0$ and $\boldsymbol{\zeta}=0$, to get, from (\ref{th2-lab2}),
$$
\begin{array}{rcl}
\|\z^\alpha_n(t^*)\|^2=\|\w^\alpha_n(t^*)\|^2&\le&\displaystyle  e^{G_{1, \infty} t^*} \left\{ G_{2,\infty}  (1+G_{3,\infty}t^*) \right.
\\
&&+ G_{4,\infty} \|\f\|^2_{L^\infty(0,\infty; L^2(\Omega))} t^*\} (\lambda_1^{-\frac{1}{2}}\alpha^2+\lambda_{n+1}^{-\frac{3}{2}})
\\
&<&\displaystyle \frac{K_{2, \infty}}{4} (\lambda_1^{-\frac{1}{2}}\alpha^2+\lambda_{n+1}^{-\frac{3}{2}}),
\end{array}
$$
which is a contradiction with (\ref{th2-lab5}). On the other hand, assume $t^*> T$. Then use
inequality  (\ref{stabL2-w_II}), with $t_0 =t^*-T$, and $\boldsymbol{\zeta}(t)$, satisfying $\boldsymbol{\zeta}(t_0)=\z^\alpha_n(t_0)$, to find
\begin{equation}\label{th2-lab6}
\begin{array}{rcl}
\|\z^\alpha_n(t^*)-P_n\boldsymbol{\zeta}(t^*)\|^2&\le&\displaystyle  e^{G_{1, \infty} T} \left\{ G_{2,\infty}  (1+G_{3,\infty}t^*) \right.
\\
&&+ G_{4,\infty} \|\f\|^2_{L^\infty(0,\infty; L^2(\Omega))} T\} (\lambda_1^{-\frac{1}{2}}\alpha^2+\lambda_{n+1}^{-\frac{3}{2}})
\\
&\le&\displaystyle \frac{K_{2, \infty}}{4} (\lambda_{1}^{-\frac{1}{2}}\alpha^2+\lambda_{n+1}^{-\frac{3}{2}}).
\end{array}
\end{equation}
Furthermore, we have, by (\ref{th2-lab1}) and (\ref{th2-lab5-bis}), that  
\begin{equation}\label{th2-lab7}
\|\boldsymbol{\zeta}(t^*)\|^2\le B \|\boldsymbol{\zeta}(t_0)\| e^{-M T}\le \frac{K_{2,\infty}}{4}  (\lambda_{1}^{-\frac{1}{2}}\alpha^2+\lambda_{n+1}^{-\frac{3}{2}}).
\end{equation}
Putting together (\ref{th2-lab6}) and (\ref{th2-lab7}) implies 
$$
\|\z^\alpha_n(t^*)\|^2\le\|\z^\alpha_n(t^*)-P_n\boldsymbol{\zeta}(t^*)\|^2+\|\boldsymbol{\zeta}(t^*)\|^2<\frac{K_{2,\infty}}{2}  (\lambda_{1}^{-\frac{1}{2}}\alpha^2+\lambda_{n+1}^{-\frac{3}{2}}), 
$$
which again is a contraduction with (\ref{th2-lab5}).

Finally, select $K_{\infty}=\max\{K_{1, \infty}, K_{2, \infty} \}$ and combine estimates (\ref{th2-lab4}) and (\ref{lm13-lab3}), with $K_{1,\infty}$, $\widetilde E_{2, \infty}$ and $\|\f\|_{L^\infty(0,\infty; L^2(\Omega))}$ instead of $K_1$, $ \widetilde E_2$ and $\|\f\|_{L^\infty(0,T; L^2(\Omega))}$, to conclude the proof. 

\section{ Concluding remarks}\label{Remarks}

\begin{enumerate}
\item If one bounds the term $L_{11}$ in Lemma \ref{lm:stabH1-z} as 
$$
\begin{array}{rcl}
L_{11}&=&\alpha^2(A(I+\alpha^2 A)^{-1} P_n\f, A\z^\alpha_n)=\alpha((\alpha^2 A)^{\frac{1}{2}}(I+\alpha^2 A)^{-1} P_n\f, A^\frac{3}{2} \z^\alpha_n)
\\
&\le &\alpha \|P_n\f\| \| A^{\frac{3}{2}}\z^\alpha_n\|\le  \alpha\|\f\| \lambda_{n}^{\frac{1}{2}} \| A\z^\alpha_n\|
\\
&\le&\displaystyle\frac{C_\varepsilon}{\nu}\alpha^2\lambda_{n}\|\f\|^2+\varepsilon\nu \|A^{\frac{1}{2}}\z^\alpha_n\|^2,
\end{array}
$$
one obtains local-in-time error estimates for the Dirichlet norm, i.e.,
$$
\sup_{0\le t \le T}\|A^{\frac{1}{2}}(\u_n^\alpha(t)-\u(t))\|^2\le K\,(\alpha +\lambda_{n+1}^{-\frac{1}{2}})
$$	
under the assumption $\alpha \lambda_1^{-\frac{1}{2}} \lambda_n<1$. In doing so, we have used the fact that $\|A^{\frac{1}{2}}P_n\u\|^2\le \lambda_{n} \|\u\|^2$ for all $\u\in D(A^{\frac{1}{2}})$.

Global-in-time error estimates for the Dirichlet norm follow by using the $\H^1(\Omega)$ stability of solutions to the Navier-Stokes equations obtained in (\ref{second-energy-z}). See \cite{Heywood}.

\item If one assumes $\frac{d}{dt}\f\in L^\infty (0, T; L^2(\Omega))$, with $0<T<\infty$ or $T=\infty$, then it follows that 
$$
\sup_{0\le t< T} \|A\u_n^\alpha(t)\|^2< \infty.
$$
Thus, optimal local- and global-in-time error estimates can be derived, i.e.,
$$
\sup_{0\le t <T}\|\u_n^\alpha(t)-\u(t)\|^2\le K\,(\lambda_{1}^{-1}\alpha^2+\lambda_{n+1}^{-2}).
$$
This argument is tedious and involves a plethora of computations. The reader has been spared such unnecessary technicalities herein.	
    

\end{enumerate}

\end{document}